\definecolor{cutcolour}{RGB}{0,100,0}
\newcounter{theorem}
\newtheorem{theorem}[subsection]{Theorem}
\newtheorem{lemma}[subsection]{Lemma}
\newtheorem{proposition}[subsection]{Proposition}
\newtheorem{introtheorem}[theorem]{Theorem}
\newtheorem{introcorollary}[theorem]{Corollary}
\theoremstyle{definition}
\newtheorem{definition}[subsection]{Definition}
\newtheorem{introdefinition}[theorem]{Definition}
\newtheorem*{remark*}{Remark}
\newtheorem{remark}[subsection]{Remark}
\numberwithin{equation}{section}
\newcommand{\ssum}{\textstyle\sum\limits}
\newcommand{\dimnuc}{\dim_{\mathrm{nuc}}}
\newcommand{\id}{\mathrm{id}}
\title{Nuclear dimension of simple $\mathrm{C}^*$-algebras}
\author[J.\ Castillejos]{Jorge Castillejos}
\address{\hskip-\parindent Jorge Castillejos, Department of Mathematics, KU Leuven, Celestijnenlaan 200b, 3001 Leuven, Belgium.}
\curraddr{Institute of Mathematics, Polish Academy of Sciences, ul. {\'S}niadeckich 8, 00-656 Warszawa, Poland}
\email{jcastillejoslopez@impan.pl}
\author[S.\ Evington]{Samuel Evington}
\address{\hskip-\parindent Samuel Evington, Institute of Mathematics, Polish Academy of Sciences, ul. {\'S}niadeckich 8, 00-656 Warszawa, Poland}
\curraddr{Mathematical Institute, University of Oxford, Radcliffe Observatory Quarter, Woodstock Road, Oxford, OX2 6GG}
\email{Samuel.Evington@maths.ox.ac.uk}
\author[A.\ Tikuisis]{Aaron Tikuisis}
\address{\hskip-\parindent Aaron Tikuisis, Department of Mathematics and Statistics, University of Ottawa, Ottawa, K1N 6N5, Canada.}
\email{aaron.tikuisis@uottawa.ca}
\author[S.\ White]{Stuart White}
\address{\hskip-\parindent Stuart White, School of Mathematics and Statistics, University of Glasgow, Glasgow, G12 8QW, Scotland and Mathematisches Institut der WWU M\"unster, Einsteinstra\ss{}e 62, 48149 M\"unster, Germany.}
\curraddr{Mathematical Institute, University of Oxford, Radcliffe Observatory Quarter, Woodstock Road, Oxford, OX2 6GG}
\email{stuart.white@maths.ox.ac.uk}
\author[W.\ Winter]{Wilhelm Winter}
\address{\hskip-\parindent
Wilhelm Winter, Mathematisches Institut der WWU M\"unster, Einsteinstra\ss{}e 62, 48149 M\"unster, Germany.}
\email{wwinter@uni-muenster.de}
\thanks{Research partially supported by: EPSRC (EP/N00874X/1 (AT), EP/R025061/1 (SE, SW), EP/K503058/1 and EP/J500434 (SE)), DFG (SFB 878, EXC 2044 Mathematics M\"unster: Dynamics -- Geometry -- Structure (WW)), NSERC (AT), NCN (2014/14/E/ST1/00525 (SE)), by an Alexander von Humboldt Foundation fellowship (SW), and a Methusalem grant of the Flemish Government (JC)}
\begin{document}
\maketitle

\begin{abstract}
We compute the nuclear dimension of separable, simple, unital, nuclear, $\mathcal Z$-stable $\mathrm{C}^*$-algebras. This makes classification accessible from $\mathcal Z$-stability and in particular brings large classes of $\mathrm{C}^*$-algebras associated to free and minimal actions of amenable groups on finite dimensional spaces within the scope of the Elliott classification programme.
\end{abstract}

\renewcommand*{\thetheorem}{\Alph{theorem}}

\section*{Introduction}

\noindent 
Nuclear dimension is a non-commutative generalisation of topological covering dimension to $\mathrm{C}^*$-algebras, introduced in \cite{WZ10}. A unital abelian $\mathrm{C}^*$-algebra consists of continuous functions on a compact Hausdorff space $X$; in this case the nuclear dimension recaptures the dimension of $X$.  At the other extreme lie simple $\mathrm{C}^*$-algebras, where nuclear dimension divides the exotic examples of \cite{Vi99,Ro03,To08,GK14}  from those accessible to $\mathrm{K}$-theoretic classification.  Indeed, through the work of generations of researchers (\cite{Ki95,Phi00,Go15,EGLN15,TWW17,Wi14} building on numerous works going back to \cite{El76}), we now have a complete classification of separable, simple, unital $\mathrm{C}^*$-algebras of finite nuclear dimension satisfying Rosenberg and Schochet's universal coefficient theorem (UCT) \cite{RS:UCT}. This provides the $\mathrm{C}^*$-analogue of the celebrated classification of amenable factors due to Connes and Haagerup \cite{Co76,Ha87}. 

A major task at this point is to identify simple nuclear $\mathrm{C}^*$-algebras of finite nuclear dimension, through establishing the Toms--Winter conjecture (\cite{ET08,Wi10}; the precise statement is \cite[Conjecture 9.3]{WZ10}, and an overview is given in \cite[Section 5]{Wi18}). This predicts that finite nuclear dimension is but one facet of a meta-notion of regularity for simple nuclear $\mathrm{C}^*$-algebras, with alternate descriptions of very different natures. Classification will then be accessed through regularity, the particular form depending on the example of interest.

In this paper we show that for separable, simple, unital, and nuclear $\mathrm{C}^*$-algebras, finite nuclear dimension is entailed by the tensorial absorption condition of $\mathcal Z$-stability, where $\mathcal Z$ is the Jiang--Su algebra of \cite{JS99} ($\mathcal Z$-stability will be described further below). Combining this with the main result of \cite{Wi12} gives the following theorem, which was predicted by the Toms--Winter conjecture.
Both implications making up this equivalence have striking applications: (i)$\Rightarrow$(ii) allows classification to be accessed via localisation at strongly self-absorbing algebras (\cite{Wi14}), while the implication (ii)$\Rightarrow$(i) proven here brings large classes of examples coming from topological dynamics within the scope of classification (see Corollaries \ref{CorE} and \ref{NewCor} below).

\begin{introtheorem}\label{ThmA}
Let $A$ be an infinite dimensional, separable, simple, unital, nuclear $\mathrm{C}^*$-algebra.
Then the following statements are equivalent:
\begin{enumerate}
\item[\rm{(i)}] $A$ has finite nuclear dimension;
\item[\rm{(ii)}]  $A$ is $\mathcal Z$-stable, i.e., $A\cong A\otimes\mathcal Z$.
\end{enumerate}
\end{introtheorem}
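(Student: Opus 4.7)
The implication (i)$\Rightarrow$(ii) is the content of \cite{Wi12}, so the substantive task is to prove (ii)$\Rightarrow$(i). My aim would be to show the stronger statement $\dimnuc(A)\leq 1$. By definition, this requires producing, for each finite $F\subset A$ and $\varepsilon>0$, finite-dimensional C*-algebras $F^{(0)},F^{(1)}$, a u.c.p.\ map $\psi\colon A\to F^{(0)}\oplus F^{(1)}$, and c.p.\ contractive order-zero maps $\phi^{(i)}\colon F^{(i)}\to A$ such that $\|\sum_i\phi^{(i)}\psi^{(i)}(a)-a\|<\varepsilon$ for $a\in F$, where $\psi=(\psi^{(0)},\psi^{(1)})$. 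Reformulating in a sequence algebra, the goal becomes the production of a single pair of order-zero maps $\Phi^{(0)},\Phi^{(1)}\colon F^{(0)}\oplus F^{(1)}\to A_\omega$ with $\Phi^{(0)}(1)+\Phi^{(1)}(1)=1_{A_\omega}$ and commuting with $A$, so the ``2-coloured'' approximation problem is really one about central sequences.

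The plan is to work through the trace-kernel extension
\[
0\longrightarrow J_A\longrightarrow A_\omega\longrightarrow A^\omega\longrightarrow 0,
\]
where $A^\omega$ denotes the uniform-tracial ultrapower. Because $A$ is $\mathcal Z$-stable, the quotient $A^\omega$ is a finite von Neumann algebra with McDuff-type absorption properties, and in particular there is an abundant supply of matrix subalgebras and projections inside $A^\omega\cap A'$. I would first solve the lifting problem inside $A^\omega$: construct an order-zero c.p.\ map from some matrix algebra (plus a small complementary summand) into $A^\omega\cap A'$ whose image sums to $1$. This is essentially a von Neumann algebraic calculation using the Dixmier property on each factor representation together with the McDuff property, producing \emph{one} colour in the tracial world.

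The second and decisive step is to lift this tracial order-zero approximation from $A^\omega$ back to $A_\omega$ without losing the order-zero identity, while retaining unital sum. This cannot be done with a single map because the obstruction in $J_A$ need not vanish globally, and this is precisely why the second colour is forced. The key technical device I would deploy here is a \emph{complemented partition of unity} (CPoU) subordinate to a cover of the trace simplex $T(A)$ by ``good'' open sets: for each piece, the local tracial approximation lifts, and CPoU provides orthogonal positive contractions $e_1,\dots,e_n\in A_\omega\cap A'$ summing to $1$ whose supports compensate exactly for the local failures. Splitting these into two colour classes using the chromatic number of a suitable graph on the cover produces the desired two-coloured order-zero decomposition in $A_\omega$, which I then unravel to a sequence of genuine approximations in $A$.

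The main obstacle I anticipate is the passage from tracially-uniform to norm-uniform approximations: order-zero maps approximating the identity in each single trace of $A$ need not do so uniformly on all of $T(A)$. This is where $\mathcal Z$-stability enters in an essential way, since it is what allows one to produce the orthogonal projections witnessing CPoU inside the central sequence algebra. The delicate part of the argument is showing that CPoU actually holds for $\mathcal{Z}$-stable $A$ with sufficiently well-behaved trace space, and that order-zero approximations constructed pointwise on the spectrum of $T(A)$ can be patched by CPoU without destroying the order-zero relation; both hinge on the interaction between the affine function system on $T(A)$ and the matricial structure inherited from $\mathcal Z$.
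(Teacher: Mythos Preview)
Your proposal has the right large-scale architecture---work in the trace-kernel extension, use CPoU to globalise local tracial approximations, exploit $\mathcal Z$-stability for the two colours---but there are two genuine gaps and one misidentification that would prevent the argument from closing.

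First, your reformulation of the problem is not correct: the maps $\phi^{(i)}\colon F^{(i)}\to A$ in a nuclear dimension approximation do \emph{not} take values in the central sequence algebra, and there is no reason to expect $\Phi^{(i)}$ to commute with $A$. What the paper actually does is factor $\id_A\otimes 1_{\mathcal Z}\colon A\to A\otimes\mathcal Z$ (not a map into $A_\omega\cap A'$) through finite-dimensional algebras, and the order-zero pieces genuinely depend on $A$ in a non-central way. Relatedly, the two colours do \emph{not} arise from a graph colouring of a cover of $T(A)$: they come from the fixed decomposition $1_{\mathcal Z}=h+(1_{\mathcal Z}-h)$ with $h\in\mathcal Z_+^1$ of full spectrum, so that $\id_A\otimes 1_{\mathcal Z}$ is the sum of the two order-zero maps $\id_A\otimes h$ and $\id_A\otimes(1_{\mathcal Z}-h)$. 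The CPoU gluing happens \emph{within} the construction of a single order-zero approximant, not across the two colours.

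Second, and more seriously, you are missing the entire \emph{uniqueness} half of the argument. The existence step (your ``solve in $A^\omega$'' plus CPoU) only produces a c.p.c.\ order-zero map $\Phi\colon A\to A_\omega$ of nuclear dimension zero which agrees with the diagonal embedding \emph{in trace}. That is far from agreeing in norm, and norm approximation is what nuclear dimension requires. The paper closes this gap with a classification theorem for order-zero maps (Lemma~\ref{lem:ClassMaps}): after tensoring by $h$ (respectively $1_{\mathcal Z}-h$), any two order-zero maps $A\to B_\omega$ that agree on all traces become unitarily equivalent in $(B\otimes\mathcal Z)_\omega$. This uniqueness statement is where the bulk of the technical work lies---it requires strict comparison in relative commutants (Lemma~\ref{lem:StrictClosureStrictComp}), a complete description of traces on $B_\omega\cap\pi(A)'$ (Proposition~\ref{prop:CommTraces}), and the transfer via property~(SI) from $B^\omega$ to $B_\omega$ (Lemma~\ref{lem:Omnibus}). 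CPoU is used throughout these structural results, not only in the existence step. Without this uniqueness theorem you have no mechanism to upgrade tracial agreement to norm agreement, and the argument cannot conclude. (A small side point: $A^\omega$ is not a von Neumann algebra in general; the fibrewise von Neumann statements must be proved separately and then assembled via CPoU.)
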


\smallskip

Let us put conditions (i) and (ii) into context.  
Tensorial absorption (or stability) phenomena are ubiquitous in operator algebras, originating with the characterisation of properly infinite von Neumann algebras $\mathcal M$ as those which tensorially absorb $\mathcal B(\mathcal H)$, i.e., are isomorphic to the von Neumann tensor product $\mathcal M \, \overline{\otimes} \, \mathcal B(\mathcal H)$. A major step in Connes' seminal work \cite{Co76} was to show that injective II$_1$ factors are McDuff, i.e., they absorb the hyperfinite II$_1$ factor tensorially, while Kirchberg famously characterised pure infiniteness for simple nuclear $\mathrm{C}^*$-algebras through tensorial absorption of the Cuntz algebra $\mathcal O_\infty$ (\cite{Ki95}). 

In a precise sense, the Jiang--Su algebra $\mathcal Z$ is the smallest possible, and so from this perspective the most natural, $\mathrm{C}^*$-analogue of the hyperfinite II$_1$ factor $\mathcal R$. It satisfies $\mathcal Z\otimes\mathcal Z\cong \mathcal Z$; moreover, this isomorphism occurs in a particularly strong way, analogous to the corresponding statement for $\mathcal R$ (see \cite{TW07}), and $\mathcal Z$ is the minimal non-trivial $\mathrm{C}^*$-algebra with this property (\cite{Wi11}). Accordingly, $\mathcal Z$-stability is the weakest non-trivial form of tensorial absorption.  Moreover, $\mathcal Z$-stability is to a vast extent necessary for $\mathrm{K}$-theoretic classification: $\mathcal Z$ has the same $\mathrm{K}$-theory as $\mathbb C$ --- these algebras are $\mathrm{KK}$-equivalent (think of this as a very weak kind of homotopy equivalence) --- and a unique trace. Therefore, under natural restrictions, a simple $\mathrm{C}^*$-algebra and its $\mathcal Z$-stabilisation share the same Elliott invariant ($\mathrm{K}$-theory and traces).

While all known constructions of the Jiang--Su algebra are a little involved, $\mathcal Z$-stability can be phrased entirely without reference to the algebra $\mathcal Z$ itself.  Reminiscent of McDuff's characterisation of $\mathcal{R}$-stability of a II$_1$ factor in terms of approximately central matrix algebras (\cite{McD70}), $\mathcal Z$-stability is equivalent to the existence of approximately central matrix cones which are large in a suitable sense (\cite{RW10}). Moreover, for simple, nuclear $\mathrm{C}^*$-algebras, via the groundbreaking work of Matui and Sato (\cite{MS12}), this largeness can be measured in trace (\cite{HO13}). This criterion has enabled $\mathcal Z$-stability to be established in a number of settings where a direct proof of finite nuclear dimension is unavailable (\cite{EN17,K17,CJKMST-D,KS18}).

Turning to (i) in Theorem \ref{ThmA}, one begins by noting that, through partitions of unity, a finite open cover of a compact Hausdorff space $X$ gives rise to a finite dimensional approximation of $C(X)$. These provide an explicit realisation of the completely positive approximations of Choi--Effros and Kirchberg (\cite{CE78,Ki77}) which characterise nuclearity for $\mathrm{C}^*$-algebras.  Covering dimension is determined by the existence of arbitrarily fine open covers which can be coloured so that no two sets of the same colour overlap; the number of colours needed determines the dimension.\footnote{The dimension is at most  $n$ when $n+1$ colours suffice. For example, the interval $[0,1]$ is a $1$-dimensional space, and has $2$-colourable arbitrarily fine open covers.} A finite colouring of an open cover gives a decomposition of the associated finite dimensional model of $C(X)$. In the general setting, completely positive approximations can be viewed as non-commutative partitions of unity.  The nuclear dimension (and its forerunner, the \emph{decomposition rank} from \cite{KW04}) is defined in terms of uniformly decomposable completely positive approximations; i.e., finitely colourable non-commutative partitions of unity. We recall the precise definition in Paragraph \ref{SubSecNDim}.

In principle, a system of completely positive approximations fully encodes a nuclear $\mathrm{C}^*$-algebra $A$, but extracting information is not straightforward without imposing additional structure on the approximations.\footnote{Indeed, as described further below, we use a  refined form of the completely positive approximation property which can be used to simultaneously approximate the trace simplex of $A$ in the proof of our main result.} With finite nuclear dimension, one can use the uniform bound on the number of colours to transfer properties of the finite dimensional algebras in the approximation back to $A$, albeit at the cost of suitably loosening these properties, often in a dimensional way (see \cite{Rob11, Wi10, Wi12}).  This strategy is at the heart of the construction of large approximately central matrix cones from nuclear dimension approximations in the proof of Theorem \ref{ThmA} (i)$\Rightarrow$(ii) given in \cite{Wi12}.

\medskip

The main result of this paper is the implication (ii)$\Rightarrow$(i) in Theorem \ref{ThmA}. We also determine the relationship between finite nuclear dimension and decomposition rank for simple $\mathrm{C}^*$-algebras in terms of quasidiagonality (see \cite{V93,Po,BK97,B06,KW04,MS14,TWW17} for the history, and partial solutions to this problem). 

\begin{introtheorem}
\label{thm:MainThm1}
Let $A$ be a  separable, simple, unital, nuclear, $\mathcal Z$-stable $\mathrm{C}^*$-algebra. Then $A$ has nuclear dimension  at most $1$.  If $A$ is also finite and all traces on $A$ are quasidiagonal, then $A$ has decomposition rank  at most $1$.
\end{introtheorem}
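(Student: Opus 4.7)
The plan is to witness $\dimnuc A \leq 1$ by producing, for every finite subset $F\subset A$ and every $\varepsilon>0$, a completely positive approximation $(E,\psi,\phi)$ of $\id_A$ on $F$ to within $\varepsilon$, in which the finite-dimensional algebra $E$ decomposes as $E = E^{(0)}\oplus E^{(1)}$ with each restriction $\phi|_{E^{(i)}}$ order-zero.  One starts from the standard Choi--Effros--Kirchberg approximations (which are a priori only single-coloured), and uses $\mathcal Z$-stability to extract a two-colouring.

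First I would strengthen the CP approximation property so that $\phi\circ\psi$ is close to $\id_A$ not only in norm on $F$, but also uniformly across the tracial simplex $T(A)$ in the associated $2$-norms.  Such a trace-sensitive CPAP should be accessible because the tracial GNS completion of $A$ at each $\tau\in T(A)$ is injective (hence hyperfinite), so finite-dimensional approximations exist tracially; the point is to make them coherent as $\tau$ varies.  I would carry out this step inside the uniform tracial closure, or a suitable ultrapower, and transfer back to $A$.

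Next I would use $\mathcal Z\cong\mathcal Z\otimes\mathcal Z$ to select an approximately central copy of $M_2\subset\mathcal Z\subset A$.  The diagonal matrix units $e_{11},e_{22}$ provide candidate colour separators, but for the two-colouring to be valid one needs the images of disjoint colour classes in $E$ under $\phi$ to fit into asymptotically commuting order-zero maps.  The engine here is a complemented partitions of unity argument in the sequence algebra: given positive contractions with no common tracial excess, one produces a partition of $1$ by central positive contractions that complement them at every trace.  Applied to the obstructions against colouring the finite-dimensional model $E$, this yields the decomposition $E = E^{(0)}\oplus E^{(1)}$.

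The main obstacle is reconciling the discrete two-colouring of $E$ with uniformity over the (possibly large) tracial boundary of $A$: the complementing partition must work simultaneously at every trace, not trace by trace.  In the purely infinite case $T(A)=\emptyset$, the tracial step is vacuous and the result essentially reduces to Kirchberg--Phillips-type machinery; the substantive content lies in the stably finite case, where $\mathcal Z$-stability and nuclearity combine through Matui--Sato-style central sequence arguments.  For the decomposition rank addendum, the quasidiagonality hypothesis lets the upward maps $\psi$ in the tracial approximation step be chosen contractive (via the quasidiagonal approximation theorem for traces); the two-coloured approximation then inherits this contractivity and witnesses decomposition rank at most $1$.
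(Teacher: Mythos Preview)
Your outline captures some of the ingredients --- trace-uniform finite-dimensional approximations, the relevance of CPoU, the separate treatment of the traceless (Kirchberg) case --- but it misses the central mechanism of the paper's proof and proposes a colouring scheme that does not work.

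The paper does not two-colour the finite-dimensional algebra $E$ at all. Instead it proves an \emph{existence} result and a \emph{uniqueness} result, and the two colours arise from the splitting $1_{\mathcal Z}=h+(1_{\mathcal Z}-h)$ with $h\in\mathcal Z_+^1$ of \emph{full spectrum} $[0,1]$, not from matrix units $e_{11},e_{22}\in M_2$. Existence (Lemma~\ref{lem:NiceFactoring}) produces c.p.c.\ maps $\phi_n:A\to A$ factoring through finite-dimensional algebras, with the induced $\Phi:A\to A^\omega$ order zero and equal to the diagonal embedding. Uniqueness (Lemma~\ref{lem:ClassMaps}) shows that the two order zero maps $a\mapsto a\otimes h$ and $a\mapsto\Phi(a)\otimes h$ into $(A\otimes\mathcal Z)_\omega$ are unitarily equivalent (and likewise with $1_{\mathcal Z}-h$); this is where the full spectrum of $h$ is essential, since it places both maps in the setting of cones $C_0((0,1])\otimes A$ where classification by traces is accessible. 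Conjugating by the resulting unitaries exhibits $\id_A\otimes 1_{\mathcal Z}$ as a sum of two maps of nuclear dimension zero. Your proposal contains no uniqueness component at all, and your suggested colour separators $e_{11},e_{22}$ are projections: the maps $a\mapsto a\otimes e_{ii}$ are $^*$-homomorphisms, and one cannot expect approximate unitary equivalence of $^*$-homomorphisms from tracial agreement alone (indeed, that failure is precisely why two colours are needed).

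You also misdescribe the role of CPoU. It is not applied to ``obstructions against colouring $E$'' to produce a splitting $E=E^{(0)}\oplus E^{(1)}$; rather, it is the local-to-global device that (a) glues the single-trace approximations of Lemma~\ref{lem:OneTraceMap} into the uniform-in-trace existence statement of Lemma~\ref{lem:NiceFactoring}, and (b) establishes the structural properties of relative commutants in $B^\omega$ (strict comparison, determination of traces; Lemmas~\ref{lem:StrictClosureStrictComp} and Proposition~\ref{prop:CommTraces}) that, via property~(SI), feed into the uniqueness theorem. Finally, for the decomposition rank statement: quasidiagonality of traces is used to make the downward maps $\theta_n$ in Lemma~\ref{lem:NiceFactoring} approximately multiplicative (hence eventually unital $^*$-homomorphisms), which is what forces contractivity of the overall two-coloured approximation; it is not about making an already-c.p.c.\ map contractive.
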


As $\mathrm{C}^*$-algebras whose nuclear dimension or decomposition rank is zero are necessarily approximately finite dimensional (AF for short) by \cite{KW04, WZ10}, the previous theorem determines the nuclear dimension and decomposition rank of simple unital $\mathrm{C}^*$-algebras.

\begin{introcorollary}\label{CorC}
The possible values of the nuclear dimension and decomposition rank of a simple, unital $\mathrm{C}^*$-algebra are $0$, $1$ and $\infty$. The value $0$ occurs precisely for AF algebras.
\end{introcorollary}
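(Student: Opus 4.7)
The plan is to deduce Corollary~\ref{CorC} as a direct consequence of Theorem~\ref{ThmA} and Theorem~\ref{thm:MainThm1}, together with the zero-dimensional characterisations of AF algebras from \cite{KW04,WZ10} already recalled in the excerpt. I would proceed by case analysis on the dimension of $A$.

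If $A$ is finite-dimensional, then it is a matrix algebra (being simple and unital), hence AF with both $\dim_{\mathrm{nuc}}(A)$ and $\mathrm{dr}(A)$ equal to zero, settling the corollary in this case. Henceforth assume $A$ is infinite-dimensional. The value $\infty$ is realised by the exotic examples cited in the introduction (\cite{Vi99,Ro03,To08,GK14}), and the value $1$ is realised by the Jiang--Su algebra itself: it is simple, unital, nuclear, $\mathcal{Z}$-stable, and not AF, so Theorem~\ref{thm:MainThm1} forces $\dim_{\mathrm{nuc}}(\mathcal{Z})\leq 1$ while \cite{WZ10} forces $\dim_{\mathrm{nuc}}(\mathcal{Z})\neq 0$. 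The actual content of the corollary is therefore that no integer value $\geq 2$ arises.

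If $\dim_{\mathrm{nuc}}(A)<\infty$, then $A$ is in particular nuclear, so Theorem~\ref{ThmA} applies and yields $A\cong A\otimes\mathcal{Z}$, whence Theorem~\ref{thm:MainThm1} gives $\dim_{\mathrm{nuc}}(A)\leq 1$. For decomposition rank, the inequality $\dim_{\mathrm{nuc}}(A)\leq\mathrm{dr}(A)$ lets me deduce $\mathcal{Z}$-stability from $\mathrm{dr}(A)<\infty$ by exactly the same route. To invoke the second half of Theorem~\ref{thm:MainThm1} I must additionally verify that $A$ is finite and that all traces on $A$ are quasidiagonal: finiteness is automatic since finite decomposition rank forces stable finiteness by \cite{KW04} and $A$ is simple and unital, while quasidiagonality of every trace is a standard consequence of the contractivity of the approximations defining decomposition rank, well documented in the literature. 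With these hypotheses in place, Theorem~\ref{thm:MainThm1} delivers $\mathrm{dr}(A)\leq 1$.

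Finally, the value $0$ is pinned to the AF case for both invariants by \cite{KW04,WZ10}, as recalled in the excerpt. I expect no substantial obstacle; the only step that requires any thought beyond unpacking the theorems of the paper is confirming that finite decomposition rank entails quasidiagonality of all traces, and I would pin this down by a short citation.
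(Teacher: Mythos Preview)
Your argument is essentially the same as the paper's in the separable case, and your handling of the decomposition rank hypotheses (finiteness and quasidiagonality of all traces from finite decomposition rank) is correct; the paper pins this down by citing \cite[Proposition 8.5]{BBSTWW}, which gives both at once.

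However, there is a genuine gap: Corollary~\ref{CorC} is stated for simple unital $\mathrm{C}^*$-algebras \emph{without} a separability hypothesis, whereas Theorems~\ref{ThmA} and~\ref{thm:MainThm1} both require $A$ to be separable. Your proof invokes Theorem~\ref{ThmA} directly on $A$, so as written it only establishes the corollary in the separable case. The paper addresses this explicitly: given a (possibly non-separable) simple unital $A$ with finite nuclear dimension and a finite subset $\mathcal{F}\subset A$, one finds a separable, simple, unital $\mathrm{C}^*$-subalgebra $A_0\subseteq A$ containing $\mathcal{F}$ and still of finite nuclear dimension (using that simplicity and $\dimnuc\leq n$ are separably inheritable properties in the sense of Blackadar; see \cite[Proposition 2.6]{WZ10} and \cite[II.8.5]{Bla06}). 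The separable case then gives $\dimnuc(A_0)\leq 1$, and since $\mathcal{F}$ was arbitrary, $\dimnuc(A)\leq 1$. The same reduction works for decomposition rank. You should add this step.
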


When $A$ has no traces in Theorem \ref{thm:MainThm1}, it is purely infinite, and the result is \cite[Theorem G]{BBSTWW} (completing the line of research in \cite{WZ10,MS14,RSS15,EN15}). With traces, partial results were first found in the unique trace case (\cite{MS14,SWW15}) and then when the trace space of $A$ is a Bauer simplex (\cite{BBSTWW}), i.e., has compact extreme boundary. The major technical hurdle which must be overcome to go beyond this setting is a strategy for combining behaviour at individual traces to obtain global properties uniformly over the trace space while respecting the affine structure.\footnote{In the Bauer simplex setting, the affine structure is not so important; one can just work with continuous functions on the boundary, as these extend to affine functions on the simplex.} We resolve this by introducing a completely new technique --- \emph{complemented partitions of unity} --- enabling us to perform partition of unity arguments inside a $\mathcal Z$-stable nuclear $\mathrm{C}^*$-algebra.  We discuss this in more detail below, but we first turn to the consequences of Theorem \ref{ThmA} for classification.

Combining the very recent stably finite results (\cite{Go15,EGLN15} and \cite[Corollary D]{TWW17}) with the celebrated Kirchberg--Phillips classification theorem,  simple, separable, unital, nuclear $\mathrm{C}^*$-algebras of finite nuclear dimension with the UCT  are classified by their Elliott invariants.\footnote{We will refer to this as `the classification theorem' below.} A key consequence of Theorem \ref{ThmA} is that the classification theorem can now be accessed using $\mathcal Z$-stability in place of finite nuclear dimension.

\begin{introcorollary}\label{CorD}
\label{cor:Classification}
Separable, simple, unital, nuclear, $\mathcal Z$-stable $\mathrm{C}^*$-algebras in the UCT class are classified by their Elliott invariants.
\end{introcorollary}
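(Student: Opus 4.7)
The plan is straightforward: Corollary~\ref{CorD} follows directly from Theorem~\ref{ThmA} combined with the classification theorem for simple, separable, unital, nuclear C*-algebras of finite nuclear dimension with the UCT that is recalled in the paragraph immediately preceding the statement. Essentially all of the mathematical content sits in Theorem~\ref{ThmA} (or more precisely, in its new implication, Theorem~\ref{thm:MainThm1}); what remains here amounts to checking that the hypotheses match up.

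First I would dispose of the trivial case in which $A$ is finite-dimensional. Being simple and unital, $A$ is then isomorphic to a matrix algebra $M_n(\mathbb C)$, and such algebras are visibly classified by the integer $n$, which is recorded in the Elliott invariant.

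Now suppose $A$ is infinite-dimensional and satisfies the other hypotheses of Corollary~\ref{CorD}. The implication (ii)$\Rightarrow$(i) of Theorem~\ref{ThmA} upgrades $\mathcal Z$-stability, in the presence of the standing hypotheses (separable, simple, unital, nuclear), to finite nuclear dimension for $A$. Together with the UCT, this places $A$ squarely within the scope of the classification theorem, which combines the Kirchberg--Phillips theorem in the purely infinite case with the stably finite results of \cite{Go15,EGLN15} and \cite[Corollary~D]{TWW17}. That theorem then yields the desired classification of $A$ by its Elliott invariant. There is no further obstacle at this stage; the genuinely hard work is Theorem~\ref{ThmA} itself, whose new direction (ii)$\Rightarrow$(i) is the principal result of the present paper.
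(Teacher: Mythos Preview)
Your proposal is correct and matches the paper's own argument, which simply notes that Corollary~\ref{CorD} is a direct replacement of finite nuclear dimension by $\mathcal Z$-stability (via Theorem~\ref{thm:MainThm1}) in the hypotheses of the classification theorem. One small remark: your finite-dimensional case is actually vacuous, since a finite-dimensional $A$ cannot satisfy $A\cong A\otimes\mathcal Z$, so there is no need to treat it separately.
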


Even if $A$ is not itself $\mathcal Z$-stable, this can be forced by tensoring with $\mathcal Z$, at the cost of changing the algebra. So, in the language of \cite{Wi14}, separable, simple, unital, nuclear $\mathrm{C}^*$-algebras in the UCT class are classified up to $\mathcal Z$-stability.  

Recently, new techniques have been introduced to verify $\mathcal Z$-stability for $\mathrm{C}^*$-algebras constructed out of infinite dimensional objects where a direct nuclear dimension computation seems out of reach. For example, Elliott and Niu show that crossed products associated to minimal $\mathbb Z$-actions of mean dimension zero (in the sense of Gromov,  Lindenstrauss, and Weiss) on infinite dimensional spaces are $\mathcal Z$-stable, \cite{EN17}.  In contrast, direct estimates of the nuclear dimension, using Rokhlin dimension or similar techniques, only work for finite dimensional spaces (\cite{HWZ15,Sz15}). 

Even more recently, the new tool of \emph{almost finiteness} --- which can be viewed as a dynamical analogue of the combination of $\mathcal Z$-stability and amenability --- provides a method for obtaining $\mathcal Z$-stability of crossed products associated to free minimal actions of amenable groups (\cite{K17}).  This approach constructs approximately central matrix cones using tiling methods from \cite{DHZ15,CJKMST-D}, very much in the spirit of the Ornstein--Weiss Rokhlin lemma in the measurable setting.  In \cite{KS18}, almost finiteness is obtained for all free actions on finite dimensional spaces of a countable discrete group whose finite subgroups have subexponential growth. This covers groups of intermediate growth, such as the Grigorchuk group \cite{Gri80}. In contrast, direct nuclear dimension computations via Rokhlin dimension methods seem to require serious conditions on the group, such as finite asymptotic dimension (\cite{SWZ}).  

\begin{introcorollary}\label{CorE}
Let $G$ be a countably infinite, discrete group such that all finitely generated subgroups have subexponential growth.  Then crossed products $C(X)\rtimes G$ associated to free, minimal actions of $G$ on a compact metrisable finite dimensional space $X$ have finite nuclear dimension.  All these crossed products are covered by the classification theorem.
\end{introcorollary}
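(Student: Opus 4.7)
The plan is to verify the hypotheses of Theorem \ref{ThmA} for $A := C(X) \rtimes G$ and then combine with existing classification results. First I would check the straightforward properties. Since $X$ is compact metrisable and $G$ is countable, $A$ is separable, and it is unital because $C(X)$ is. As every finitely generated subgroup of $G$ has subexponential growth, each such subgroup is amenable, so $G$ itself (being a directed union of finitely generated subgroups) is amenable. Amenability of $G$ gives nuclearity of $A$, and freeness plus minimality of the action is the standard condition for simplicity of the crossed product.

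The crux of the argument is $\mathcal Z$-stability. Here I would invoke \cite{KS18}, which upgrades the hypotheses on $G$ and $X$ directly into almost finiteness of the action; then \cite{K17} converts almost finiteness of a free minimal action of an amenable group into $\mathcal Z$-stability of $A$. With $\mathcal Z$-stability secured, Theorem \ref{ThmA} (equivalently Theorem \ref{thm:MainThm1}) delivers finite nuclear dimension, in fact nuclear dimension at most $1$.

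For the classification claim, the extra ingredient needed is the UCT. Because $G$ is amenable, the transformation groupoid of the free action is amenable, and Tu's theorem on amenable groupoids then yields the UCT for $A$. Combined with the properties already established (separable, simple, unital, nuclear, finite nuclear dimension), this places $A$ within the scope of the classification theorem recalled just before Corollary \ref{CorD}.

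The ``hard part'' is not in the proof of the corollary itself but in the ingredients it combines: the substantive inputs are \cite{KS18} and \cite{K17} on the dynamical side, and Theorem \ref{ThmA} on the $\mathrm{C}^*$-side. Once these are granted, the corollary is essentially a bookkeeping exercise, with the only real thing to track being that the ``all finitely generated subgroups have subexponential growth'' clause is exactly the hypothesis needed to invoke \cite{KS18}.
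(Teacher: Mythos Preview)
Your proposal is correct and follows essentially the same route as the paper: simplicity from freeness and minimality, nuclearity and the UCT from amenability of $G$ (via Tu), almost finiteness from \cite{KS18}, $\mathcal Z$-stability from \cite{K17}, and then finite nuclear dimension and classifiability from Theorem~\ref{thm:MainThm1} and Corollary~\ref{CorD}. The only cosmetic difference is that the paper cites the specific results \cite[Theorem~8.1]{KS18} and \cite[Theorem~12.4]{K17}, and records simplicity via \cite{EH67}.
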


Additionally, \cite{CJKMST-D} shows that a generic free minimal action of a fixed amenable group $G$ on a Cantor set has $\mathcal Z$-stable crossed product. Accordingly, such crossed products are generically classifiable.

\begin{introcorollary}\label{NewCor}
Let $G$ be a countably infinite, discrete, amenable group, and let $X$ be the Cantor set. Then the set of all free, minimal actions $\alpha:G\curvearrowright X$, for which the crossed product $C(X)\rtimes_\alpha G$ has finite nuclear dimension, and so is covered by the classification theorem, is comeagre in the set of all free minimal actions of $G$ on $X$.\footnote{The Polish space structure on the space of all such free minimal actions is described just before \cite[Theorem 4.2]{CJKMST-D}.}
  \end{introcorollary}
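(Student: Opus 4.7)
The plan is to combine Theorem \ref{thm:MainThm1} of the present paper with the genericity result for $\mathcal Z$-stability proven in \cite{CJKMST-D}. The heavy lifting is entirely carried by these two inputs; what remains is essentially bookkeeping.

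First I would unpack the hypotheses needed to apply Theorem \ref{thm:MainThm1}. For any free minimal action $\alpha\colon G\curvearrowright X$ of a countable discrete amenable group $G$ on the Cantor set $X$, the crossed product $A:=C(X)\rtimes_\alpha G$ is automatically separable, unital, simple (by freeness and minimality, together with amenability of $G$), and nuclear (since $G$ is amenable). Consequently, as soon as one additionally knows $A$ is $\mathcal Z$-stable, Theorem \ref{thm:MainThm1} delivers $\dimnuc(A)\le 1$.

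Next I would invoke the main result of \cite{CJKMST-D}: the set $\mathcal S_{\mathcal Z}$ of free minimal actions $\alpha\colon G\curvearrowright X$ whose crossed product is $\mathcal Z$-stable is comeagre in the Polish space of all free minimal actions of $G$ on $X$. By the previous step, every action in $\mathcal S_{\mathcal Z}$ yields a crossed product of finite nuclear dimension, so the set of actions with this latter property contains $\mathcal S_{\mathcal Z}$ and is therefore itself comeagre.

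Finally, to conclude that these crossed products fall under the classification theorem recalled earlier, the one remaining ingredient is the UCT, which is supplied by Tu's theorem (crossed products of commutative $\mathrm{C}^*$-algebras by discrete amenable groups always satisfy the UCT). All told, there is no substantive obstacle remaining in this argument beyond the two black-box inputs: the real difficulty has been absorbed into Theorem \ref{thm:MainThm1} of the present paper and into the almost-finiteness/tiling technology of \cite{CJKMST-D}.
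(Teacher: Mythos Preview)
Your proposal is correct and follows essentially the same route as the paper: both arguments combine the genericity of $\mathcal Z$-stability from \cite{CJKMST-D} with Theorem \ref{thm:MainThm1} to obtain finite nuclear dimension, and then appeal to Tu's theorem for the UCT (with simplicity coming from freeness and minimality via \cite{EH67}). The paper's proof is the single sentence that Corollary \ref{NewCor} ``is a combination of Theorem \ref{thm:MainThm1} and \cite[Theorem 5.4]{CJKMST-D},'' which is exactly what you have written out.
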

  
In the remainder of the introduction, we change gears and discuss the proof of our main results, isolating the key new technique --- complemented partitions of unity --- introduced in this paper.  The reader may wish to return to the following two sections in parallel with the main body of the paper.

\section*{\sc From $\mathcal Z$-stability to finite nuclear dimension} 

\noindent
For most of the rest of the introduction, we think of $A$ as a separable, simple, unital, nuclear $\mathrm{C}^*$-algebra with at least one trace.  At times we will allow more general $A$, and make this explicit.

The passage from $\mathcal Z$-stability to finite nuclear dimension runs through the classification of order zero maps (equivalently $^*$-homomorphisms out of cones) by traces. As usual, there are two required  components: (i) existence, and (ii) uniqueness (up to approximate unitary equivalence). Somewhat more precisely, one should:
\begin{enumerate}
\item Produce a sequence of approximately order zero maps $A\rightarrow A\otimes\mathcal Z$ of nuclear dimension zero which uniformly approximates $\id_A\otimes 1_{\mathcal Z}:A\rightarrow A\otimes\mathcal Z$ on traces.
\item Establish a uniqueness theorem for approximately order zero maps which uniformly approximate the two maps in \eqref{Intro.E1} below on traces.
\end{enumerate}

Using the fact that $\mathcal Z$ is strongly self-absorbing, to show that $A\otimes\mathcal Z$ has nuclear dimension at most $1$ it suffices to control the nuclear dimension of the map $\id_A\otimes 1_{\mathcal Z}:A\rightarrow A\otimes\mathcal Z$. By taking a positive contraction
 $h\in\mathcal Z$ with spectrum $[0,1]$, $\id_A\otimes 1_{\mathcal Z}$ splits as the sum of the two order zero maps
\begin{equation}\label{Intro.E1}
\id_A\otimes h:A\rightarrow A\otimes \mathcal Z\text{ and }\id_A\otimes (1_{\mathcal Z}-h):A\rightarrow A\otimes\mathcal Z.
\end{equation}
Now (i) and (ii) can be used to show that each map in (\ref{Intro.E1}) has nuclear dimension zero, and so $\id_A\otimes 1_{\mathcal Z}$, and hence $A\otimes\mathcal Z$ has nuclear dimension at most $1$. To obtain (i) and (ii), we introduce the new technique of complemented partitions of unity. For ease of exposition, we will primarily focus on the existence component (i) in the discussion below.

With hindsight, the strategy above was initiated in \cite{MS14} in the case when $A$ is quasidiagonal and has a unique trace, when the existence component (i) is an immediate consequence of the quasidiagonality assumption.  An `order-zero quasidiagonality' result was subsequently developed in \cite{SWW15} to handle the case where $A$ is not assumed to be quasidiagonal (but still has unique trace).  When $A$ has many traces, these methods give approximately order zero maps of nuclear dimension zero each behaving well on an individual trace; the challenge is to combine them into a single map with the required behaviour on all traces simultaneously. Via compactness of the tracial state space, $T(A)$, one can find an open cover $U_1,\dots,U_k$ of $T(A)$, and approximately order zero maps $\theta_1,\dots,\theta_k:A\rightarrow A\otimes\mathcal Z$ of nuclear dimension zero so that $\theta_i$ models $\id_A\otimes 1_{\mathcal Z}$ on the traces in $U_i$. Given pairwise orthogonal approximately central positive contractions $(e_i)_{i=1}^k$ in $A\otimes\mathcal Z$, one can define a map $\theta:A\rightarrow A\otimes\mathcal Z$ by
\begin{equation}\label{Intro.E2}
\theta(\cdot)=\ssum_{i=1}^k e_i^{1/2}\theta_i(\cdot )e_i^{1/2};
\end{equation}
properties of the $e_i$ combine with those of the $\theta_i$ to ensure that $\theta$ is approximately order zero and of nuclear dimension zero.   The idea is that $\theta$ is obtained by `gluing together' the $\theta_i$ over the $e_i$, which should be well chosen with respect to the open cover $(U_i)_{i=1}^k$ so $\theta$ has the global tracial behaviour required by (i). To do this it is necessary to control $\tau(e_i\theta_i(\cdot))$ for $\tau\in T(A)$.

This was achieved in \cite{BBSTWW} when $T(A)$ is a Bauer simplex, i.e., the extremal boundary $\partial_eT(A)$ is compact.  In this case, continuous affine functions on $T(A)$ are in one-to-one correspondence with the continuous functions on the extreme boundary, and we only need to use extremal traces in the compactness argument to obtain the $U_i$'s. The required $(e_i)_{i=1}^k$ arise via Ozawa's theory of $\mathrm{W}^*$-bundles (\cite{Oz13}): one takes a partition of unity $(f_i)_{i=1}^k$ in $C(\partial_eT(A))$ subordinate to the cover $(U_i)_{i=1}^k$ and then one uses triviality of the $\mathrm{W}^*$-bundle associated to $A\otimes\mathcal Z$ to produce the elements $e_i$ so that $\tau(e_i)\approx f_i(\tau)$.  One can then show (see (\ref{Intro.E4}) below) that
\begin{equation}\label{Intro.E3}
\tau(e_ia)\approx \tau(e_i)\tau(a)\approx f_i(\tau)\tau(a),\quad a\in A,\ \tau\in\partial_eT(A),
\end{equation}
and then a partition of unity calculation shows that the map in (\ref{Intro.E2}) has the properties required by (i). 

The uniqueness ingredient (ii) was obtained in \cite{BBSTWW} under the same compactness assumption on the tracial extreme boundary of $A$.\footnote{A sketch of the argument can be found in the introduction to \cite{BBSTWW}.} This uses a detailed analysis of the structure of relative commutants, building on Matui and Sato's groundbreaking techniques for transferring structure between von Neumann and $\mathrm{C}^*$-algebras (\cite{MS12,MS14}).  As with (i), a local-to-global argument at the level of traces is a crucial ingredient. Without getting into detail, this again boils down to the existence of approximately central pairwise orthogonal elements $(e_i)_{i=1}^k$ satisfying (\ref{Intro.E3}) for a suitable partition of unity $(f_i)_{i=1}^k$ of $\partial_eT(A)$.

\smallskip

Fundamental difficulties arise outside the setting of Bauer simplices. Not all continuous functions on $\partial_eT(A)$ extend to continuous affine functions on $T(A)$, and $C(\partial_e T(A))$ need not embed into the centre of the strict closure of $A$ (the starting point for producing $(e_i)_{i=1}^k$ in the Bauer simplex case).  Worse, this centre can even be trivial. Moreover, to perform the required gluing argument it is not enough to specify the affine function $\hat{e}_i:\tau\mapsto \tau(e_i)$ that $e_i$ should induce.  Rather one needs to control $\widehat{e_ia}:\tau\mapsto \tau(e_ia)$ for a suitable finite collection of elements $a\in A$; cf.\ (\ref{Intro.E3}). To address this, one has to understand the subtle question of how $\mathrm{C}^*$-algebra multiplication interacts with the affine functions on $T(A)$ induced by positive elements.  

For an approximately central sequence $e_i=(e_{i,n})_{n=1}^\infty$ in $A$ and $a\in A$, 
\begin{equation}\label{Intro.E4}
\lim_{n\rightarrow\infty}|\tau(e_{i,n}a)-\tau(a)\tau(e_{i,n})|=0,\quad \tau\in\partial_eT(A),
\end{equation}
with uniform convergence when $\partial_eT(A)$ is compact.\footnote{See \cite[Lemma 4.2(i)]{Sa12} and \cite[Proposition 4.3.6]{Ev} for closely related statements. This point is developed further in \cite{CETWW}.} This is why (\ref{Intro.E3}) follows from $\tau(e_i)\approx f_i(\tau)$ in the Bauer simplex case. But without compactness, uniform convergence fails, even in straightforward cases (see \cite[Example 3.3]{CETWW}). In general, one cannot obtain $\widehat{e_ia}$ automatically from $\hat{e}_i$ outside the Bauer setting.

The key novelty of this paper is a technique for producing the $(e_i)_{i=1}^k$ required for the gluing procedure of (\ref{Intro.E2}) taking into account the affine structure of $T(A)$.  The precise definition is given in Definition \ref{defn:CPOU}; for now, we give a relatively informal version of the characterisation from Proposition \ref{prop:CPoUReformulation}.
\begin{introdefinition}\label{intro:defCPoU}
Let $A$ be a separable, unital $\mathrm{C}^*$-algebra with $T(A)$ compact and non-empty. Then $A$ has \emph{complemented partitions of unity} (CPoU) if, whenever  $a_1,\dots,a_k$ are positive elements in $A$ and $\delta>0$ is such that
\begin{equation}\label{Intro.E5}
\sup_{\tau\in T(A)}\min_i\tau(a_i)<\delta,
\end{equation}
 there exist pairwise orthogonal, approximately central, positive contractions $e_1,\dots,e_k\in A$ such that 
\begin{enumerate}[(a)]
\item $\tau(\sum_{i=1}^k e_i)\approx 1$, for all $\tau\in T(A)$;
\item $\tau(a_ie_i)\lessapprox\delta \tau(e_i)$ for all $\tau\in T(A)$ and $i=1,\dots,k$.
\end{enumerate}
\end{introdefinition}

Here (\ref{Intro.E5}) means that $(\{\tau\in T(A):\tau(a_i)<\delta\})_{i=1}^k$ is an open cover of $T(A)$. The conclusion (a) says that the $\hat{e}_i$ form an approximate partition of unity. Normally partitions of unity are constructed subordinate to a specified open cover or family of functions. But we need to control the affine functions $\widehat{e_ia_i}$ not just $\hat{e}_i$. This is the role of condition (b). Note that it has the effect of ensuring that $\hat{e}_i$ is to some extent subordinate to the complement $1-\hat{a}_i$ of $\hat{a}_i$: if $\tau\in T(A)$ satisfies $\tau(a_i)\approx 1$, then $ \tau(e_i)\approx \tau(a_ie_i)\lessapprox \delta\tau(e_i)$, so that $\tau(e_i)$ must be small.\footnote{This intuition is the reason behind our choice of terminology.}

In applications, the $a_i$ in (\ref{Intro.E5}) will arise from compactness of the tracial state space. Returning to the sketch proof of existence above, we could set $a_i:=1_{A\otimes\mathcal Z}-\theta_i(1_A)$, so that if $\theta_i$ models $\id_A\otimes 1_{\mathcal Z}$ on $U_i$ (on $1_A$ up to $\delta$), we obtain (\ref{Intro.E5}). Then given $(e_i)_{i=1}^k$ in $A\otimes\mathcal Z$ as in Definition \ref{intro:defCPoU} for these $a_i$, defining $\theta$ as in (\ref{Intro.E2}), we would have 
\begin{equation}
1-\tau(\theta(1_A))\stackrel{(\rm{a})}\approx \ssum_{i=1}^k\tau(e_ia_i)\stackrel{(\rm{b})}\lessapprox \delta\ssum_{i=1}^k\tau(e_i)\stackrel{(\rm{a})}\approx\delta,\quad \tau\in T(A),
\end{equation}
i.e., $\theta(1_A)$ globally models $1_A\otimes1_{\mathcal Z}$ in trace.  With a bit more care, the same strategy can be used to produce a $\theta$ which globally models $\id_A\otimes 1_{\mathcal Z}$ on an arbitrary finite subset of $A$ in place of $1_A$, establishing the required existence component (i). 

The uniqueness ingredient (ii) is also obtained by using complemented partitions of unity in place of the $\mathrm{W}^*$-bundle methods used in \cite{BBSTWW} for the special case of Bauer simplices. This gives the following result which we set out in Sections \ref{S4} and \ref{S5}.

\begin{introtheorem}\label{PropG}
Let $A$ be a separable, simple, unital, nuclear, $\mathcal Z$-stable $\mathrm{C}^*$-algebra with $T(A)$ compact and non-empty, and with complemented partitions of unity. Then $\dimnuc(A)\leq 1$. If additionally all traces on $A$ are quasidiagonal, then $\mathrm{dr}(A)\leq 1$.
\end{introtheorem}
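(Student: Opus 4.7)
\emph{Reduction to a two-colour decomposition.} The plan is to follow the existence/uniqueness strategy for classifying order-zero maps sketched in the introduction, with complemented partitions of unity as the key gluing tool replacing the $\mathrm{W}^*$-bundle techniques of \cite{BBSTWW}. Since $A\cong A\otimes\mathcal Z$, it suffices to bound by $1$ the nuclear dimension of the map $\id_A\otimes 1_\mathcal Z\colon A\to A\otimes\mathcal Z$. Pick a positive contraction $h\in\mathcal Z$ with spectrum $[0,1]$ and decompose
\[
\id_A\otimes 1_\mathcal Z \;=\; (\id_A\otimes h) \,+\, (\id_A\otimes (1_\mathcal Z-h))
\]
into two order-zero maps. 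Provided each summand admits c.p.\ approximations through finite-dimensional algebras by order-zero maps (i.e.\ has nuclear dimension $0$), concatenating these approximations gives a two-coloured c.p.\ approximation of $\id_A\otimes 1_\mathcal Z$, yielding $\dimnuc(A)\leq 1$.

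\emph{Existence.} Fix a finite set $F\subset A$ and $\varepsilon>0$. Trace by trace, order-zero refinements of the Matui--Sato/\cite{SWW15} technology produce approximately order-zero maps of nuclear dimension zero modelling $\id_A\otimes h$ at a single $\tau\in T(A)$; compactness of $T(A)$ then provides a finite open cover $U_1,\dots,U_k$ and associated maps $\theta_i\colon A\to A\otimes\mathcal Z$ modelling $\id_A\otimes h$ on $U_i$. Encode the defect of $\theta_i$ outside $U_i$ by positive contractions $a_i\in A\otimes\mathcal Z$ with $\sup_\tau\min_i\tau(a_i)<\delta$. Feeding these into CPoU yields pairwise orthogonal, approximately central contractions $e_1,\dots,e_k$ as in Definition \ref{intro:defCPoU}, and the glued map
\[
\theta(\cdot) \;:=\; \ssum_{i=1}^k e_i^{1/2}\theta_i(\cdot)e_i^{1/2}
\]
is approximately order zero (by pairwise orthogonality and approximate centrality of the $e_i$), inherits nuclear dimension zero from the $\theta_i$, and by items (a) and (b) of CPoU satisfies $\tau(\theta(a))\approx\tau(h)\tau(a)$ for all $\tau\in T(A)$ and $a\in F$. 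This is the required global tracial approximation of $\id_A\otimes h$ by order-zero maps of nuclear dimension zero.

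\emph{Uniqueness and the decomposition rank upgrade.} To conclude that $\id_A\otimes h$ actually has nuclear dimension zero, combine the previous step with a uniqueness theorem: any approximately order-zero map matching $\id_A\otimes h$ in trace is, up to a small perturbation, approximately unitarily equivalent to it. This follows the relative commutant analysis initiated by Matui and Sato and carried out in the Bauer case in \cite{BBSTWW} via $\mathrm{W}^*$-bundles; here those central sequence computations are replaced by repeated invocations of CPoU, applied to positive elements recording how trace-local unitaries fail to implement the equivalence elsewhere on $T(A)$. The same argument handles $\id_A\otimes(1_\mathcal Z-h)$, establishing $\dimnuc(A)\leq 1$. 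When in addition every trace on $A$ is quasidiagonal, the local existence input can be taken quasidiagonal by \cite{TWW17}, and quasidiagonality passes through both the CPoU gluing and the uniqueness step; the resulting two-coloured approximation is then a decomposition rank approximation, giving $\mathrm{dr}(A)\leq 1$.

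\emph{Main obstacle.} The delicate part is uniqueness. Existence is a fairly direct application of CPoU to finitely many local order-zero maps supplied by essentially established tools. Uniqueness, by contrast, unfolds inside nested relative commutants of $A\otimes\mathcal Z$, with CPoU invoked several times in succession on positive elements encoding progressively finer affine tracial data. Without the compactness of $\partial_eT(A)$ that drives the Bauer case, those positive elements must be manufactured purely from the affine structure of $T(A)$, and ensuring that the accumulated perturbations remain within the prescribed tolerance, so that the final unitary really implements the desired approximate equivalence, is the technical heart of the argument.
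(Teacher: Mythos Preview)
Your high-level strategy matches the paper's: split $\id_A\otimes 1_{\mathcal Z}$ into two order-zero pieces via $h$ and $1_{\mathcal Z}-h$, produce an existence map of nuclear dimension zero via CPoU gluing, and then invoke a uniqueness theorem. But two points are genuinely underspecified, and in one of them your formulation would not feed into the uniqueness step as stated.

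First, in the existence step you only arrange $\tau(\theta(a))\approx\tau_{\mathcal Z}(h)\tau(a)$, i.e., first-moment agreement. The paper's uniqueness (Lemma~\ref{lem:ClassMaps}) requires agreement of \emph{all} order-zero moments, $\tau\circ\phi_1=\tau\circ\phi_2^m$ for every $m$, and this is not automatic from first-moment matching of a glued map of the form $\sum e_i^{1/2}\theta_i(\cdot)e_i^{1/2}$. The paper sidesteps this by organising existence differently: Lemma~\ref{lem:NiceFactoring} glues trace-local factorisations $A\to F_\tau\to A$ through finite-dimensional algebras (Lemma~\ref{lem:OneTraceMap}) using CPoU projections in $A^\omega\cap A'$, producing a sequence $\phi_n\colon A\to A$ whose induced map $A\to A^\omega$ is \emph{exactly} the diagonal embedding. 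The tensoring with $h$ is then applied afterwards to both the diagonal and the lifted order-zero map $\tilde\Phi\colon A\to A_\omega$, so the moment condition \eqref{eq:ClassMapsTrace} is immediate from $q\circ\tilde\Phi$ being a $^*$-homomorphism.

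Second, your uniqueness paragraph (``CPoU invoked several times in succession'') does not constitute an argument. The paper's route is: CPoU feeds a single local-to-global transfer principle (Lemma~\ref{lem:RealizingTypes}), which is then used to prove strict comparison in relative commutants of $B^\omega$ (Lemma~\ref{lem:StrictClosureStrictComp}) and to determine all traces on $B^\omega\cap\phi(A)'$ (Proposition~\ref{prop:CommTraces}). These structural facts are then transported to $B_\omega$ via property~(SI) and central surjectivity (Lemma~\ref{lem:Omnibus}), and only then does one obtain the classification-of-maps statement (Lemma~\ref{lem:ClassMaps}) that yields the unitary equivalence of $\psi_1=\phi_1(\cdot)\otimes h$ and $\psi_2=\phi_2(\cdot)\otimes h$. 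None of this is ``repeated CPoU''; it is one CPoU-powered transfer lemma followed by substantial structural work. For the decomposition rank upgrade, quasidiagonality enters in the existence step (last line of Lemma~\ref{lem:OneTraceMap}) to make the downward maps $\theta_n$ approximately multiplicative, hence unital after adjustment, so that the two-coloured approximation is contractive.
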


Having isolated the abstract condition of complemented partitions of unity, the other main challenge of this paper is to verify this condition in the presence of $\mathcal Z$-stability.
This is the following theorem, which together with Theorem \ref{PropG} gives Theorems \ref{ThmA} and \ref{thm:MainThm1} and their corollaries.
\begin{introtheorem}\label{thm:mainCPoU}
Let $A$ be a separable, nuclear, $\mathcal Z$-stable $\mathrm{C}^*$-algebra with $T(A)$ compact and non-empty. Then $A$ has complemented partitions of unity.
\end{introtheorem}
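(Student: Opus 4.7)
The plan is to reformulate CPoU inside the uniform tracial ultrapower of $A$, extract the required pairwise orthogonal elements at the von Neumann bicommutant level, and then lift them using $\mathcal Z$-stability. Let $\omega$ be a free ultrafilter on $\mathbb N$ and let $J_A\subseteq A^\omega$ denote the uniform trace kernel ideal consisting of $(x_n)$ with $\lim_\omega \sup_{\tau\in T(A)}\tau(x_n^*x_n)=0$. Write $\mathcal M$ for the bicommutant of $A^\omega/J_A$ in its GNS representation with respect to the uniform trace; this is a finite von Neumann algebra whose normal tracial state space naturally contains $T(A)$. By a standard saturation and reindexing argument in the spirit of Kirchberg's $\varepsilon$-test, CPoU for $A$ reduces to the following: given the data $a_1,\dots,a_k$ and $\delta$ of Definition~\ref{intro:defCPoU}, produce pairwise orthogonal projections $p_1,\dots,p_k\in \mathcal M\cap A'$ with $\sum_i p_i=1_{\mathcal M}$ and $\tau(a_i p_i)\leq \delta\tau(p_i)$ for every $\tau\in T(A)$.

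Next I would construct these projections by exploiting the structure that $\mathcal Z$-stability and nuclearity force on $\mathcal M\cap A'$. The hypothesis $\sup_\tau\min_i\tau(a_i)<\delta$ says that the sets $U_i:=\{\tau\in T(A):\tau(a_i)<\delta\}$ form an open cover of $T(A)$; pick any Borel partition $T(A)=\bigsqcup_i B_i$ with $B_i\subseteq U_i$. I expect the centre $Z(\mathcal M\cap A')$ to be large enough to realise Borel functions on $T(A)$: combining the Matui--Sato style analysis of $A^\omega\cap A'$ with the disintegration of $\mathcal M$ over $\partial_e T(A)$ (accessible because $\mathcal Z$-stability together with nuclearity makes $\mathcal M\cap A'$ of McDuff type on each fibre) produces pairwise orthogonal central projections $p_i\in Z(\mathcal M\cap A')$ supported over $B_i$ with $\sum_i p_i=1$. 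The tracial estimate $\tau(a_ip_i)\leq\delta\tau(p_i)$ is then automatic from $B_i\subseteq U_i$.

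Finally, I would lift the von Neumann projections $p_i$ to pairwise orthogonal positive contractions in $A^\omega\cap A'$, and then reindex to obtain honest elements of $A$. The naive move is to lift each $p_i$ to a positive contraction approximating it in uniform $2$-norm, but such lifts are only approximately pairwise orthogonal, and cutting down to force exact orthogonality via functional calculus tends to destroy the tracial inequalities against the $a_i$. I expect this step to be the main obstacle. To circumvent it, I would use $\mathcal Z$-stability to unitally embed $\mathcal Z$ (and hence a matrix cone structure) into $A^\omega\cap A'$, and compose the approximate $\|\cdot\|_{2,u}$-lifts with a suitable c.p.c.\ order-zero map $\mathbb C^k\to A^\omega\cap A'$ whose minimal projections are genuinely pairwise orthogonal. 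Matui--Sato property (SI), automatic under $\mathcal Z$-stability and nuclearity, is the tool to ensure that the tracial inequalities $\tau(a_ie_i)\leq \delta\tau(e_i)$ and the full-mass condition $\tau(\sum_i e_i)\approx 1$ both survive the lift, yielding the elements required by Definition~\ref{intro:defCPoU}.
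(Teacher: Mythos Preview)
Your proposal has a genuine gap at its core: the step where you produce pairwise orthogonal central projections $p_i \in Z(\mathcal M \cap A')$ supported over a Borel partition $B_i \subseteq U_i$ of $T(A)$ is precisely the move that is known to work when $\partial_e T(A)$ is compact (the Bauer simplex case handled in \cite{BBSTWW}) and to fail in general. The paper is explicit about this: outside the Bauer setting, $C(\partial_e T(A))$ need not embed into the centre of the strict closure of $A$, and indeed that centre can be trivial. There is no disintegration of the relevant von Neumann algebra over $\partial_e T(A)$ to exploit when the extreme boundary is not compact, so ``supported over $B_i$'' has no obvious meaning. Relatedly, your claim that the tracial estimate $\tau(a_i p_i)\leq \delta\tau(p_i)$ is ``automatic from $B_i\subseteq U_i$'' presupposes exactly the kind of factorisation $\tau(e_i a)\approx \tau(e_i)\tau(a)$ that only holds (uniformly) for extremal traces in the Bauer case; see the discussion around \eqref{Intro.E4}. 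Controlling $\widehat{e_i a_i}$ rather than just $\hat e_i$ is the crux of the problem, and your outline simply assumes it away.

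The paper's actual route is completely different and does not touch the centre of any relative commutant. It first uses nuclearity through the refined completely positive approximations of \cite{BCW16} (Lemma~\ref{lem:BCW}) to build \emph{non-orthogonal} positive contractions $b_1,\dots,b_k$ in $A^\omega\cap S'$ satisfying the tracial inequality against the $a_i$ (Lemma~\ref{s-lemma}); the point is that the finite dimensional model $F_m$ has a genuinely finite dimensional trace simplex, so one can partition its matrix summands according to which $a_i$ is small there, and then push back. Orthogonality is then recovered in a separate step using uniform property $\Gamma$ (a consequence of $\mathcal Z$-stability, Proposition~\ref{prop:GammaZ}): one tracially projectionises (Lemma~\ref{lem:Projectionization}), orthogonalises at the cost of a factor $1/k$ in total trace, and iterates via a maximality argument (Lemma~\ref{lem:GeomSeriesCPOU}). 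Property~(SI) plays no role in obtaining CPoU; it enters only later, in Section~\ref{S4}, when transferring structural consequences of CPoU from $A^\omega$ back to $A_\omega$.
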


Note that we establish CPoU in Theorem \ref{thm:mainCPoU} without assuming $A$ to be unital. In the main results of our paper, unitality enters through Theorem \ref{PropG}, as the methods from \cite{BBSTWW} are only available in the presence of a unit. However, in \cite{CE} the first two authors develop the necessary machinery to extend Theorem \ref{PropG}, and hence the main results of this paper to the 
non-unital setting. It should be noted that their result, although more general, 
requires Theorem \ref{thm:mainCPoU} and the CPoU technology introduced in this paper.

The partitions of unity of Theorem \ref{thm:mainCPoU} are also applicable in other situations, some of which are discussed in \cite{CETWW} (for example the equivalence of strict comparison and $\mathcal Z$-stability under the assumption of uniform property $\Gamma$).   Further, complemented partitions of unity will play a crucial role in the abstract approach to classification of $\mathrm{C^*}$-algebras being developed in \cite{CGSTW}.

\section*{\sc Obtaining complemented partitions of unity}

\noindent
We end the introduction with an outline of the proof of Theorem \ref{thm:mainCPoU}.  Fix $a_1,\dots,a_k$ and $\delta>0$ as in Definition \ref{intro:defCPoU}.  The three main components of the proof are as follows:
\begin{enumerate}[(1)]
\item \emph{Non-orthogonal partitions of unity}: We produce approximately central positive contractions $e_1,\dots,e_k$ satisfying (a) and (b), but without the pairwise orthogonality condition.  \label{Intro-Sect3.1}
\item \emph{Tracial projectionisation}: We replace the elements $(e_i)_{i=1}^k$ by elements $(p_i)_{i=1}^k$ which are approximately projections in trace while retaining conditions (a) and (b).\label{Intro-Sect3.2}
\item \emph{Orthogonalisation}: We replace $(p_i)_{i=1}^k$ by pairwise orthogonal tracial approximate projections, still satisfying (b) but at the cost of replacing  (a) with the weaker condition $\tau(\sum_{i=1}^k p_i)\approx 1/k$.\label{Intro-Sect3.3}
\end{enumerate}

These steps produce elements satisfying (b) but they only cover approximately $1/k$ of the total trace.  So we repeat the procedure\footnote{This in fact requires a stronger version of step (\ref{Intro-Sect3.1}).} underneath the tracially approximate projection $1_A-\sum_{i=1}^kp_i$. Stage (\ref{Intro-Sect3.2}) is necessary at this point so that we have a (tracially approximate) orthogonal complement.  Theorem \ref{thm:mainCPoU} then follows using a maximality argument.

\smallskip
Our proof of step (\ref{Intro-Sect3.1}) uses nuclearity through the refined version of the completely positive approximation property obtained in \cite{BCW16}. This gives approximations of the identity map of the form
\begin{equation}\label{Intro.E6}
\xymatrix{A\ar[rr]\ar[dr]_{\psi}&&A\\&F\ar[ur]_\phi},
\end{equation}
where $F$ is finite dimensional, $\psi$ is completely positive, contractive and approximately order zero, and $\phi$ is a convex combination of order zero maps.  In particular, $\phi$ preserves traces, and $\psi$ approximately does so.  This gives an approximation of $T(A)$ by $T(F)$ --- a finite dimensional simplex. The idea is to build complemented partitions of unity for $\psi(a_1),\dots,\psi(a_k)$ in $F$ (by taking the corresponding $e_i$ to be supported only on those full matrix summands of $F$ where $\psi(a_i)$ is small) and then push this back into $A$ (at the cost of losing orthogonality).

Stages (\ref{Intro-Sect3.2}) and (\ref{Intro-Sect3.3}) use the extra space given by a tensor factor of $\mathcal Z$ (and do not need nuclearity). To get a feeling for (\ref{Intro-Sect3.3}), suppose $A\cong A\otimes\mathcal Q$, where $\mathcal Q$ is the universal UHF algebra. One can identify $A\cong A\otimes M_k$, so that $a_i$ and $p_i$ are approximately of the form $\tilde{a}_i\otimes 1_{M_k}$ and $\tilde{p}_i\otimes 1_{M_k}$ respectively. Letting $(d_{ij})$ be a system of matrix units for $M_k$, one defines new approximate projections by $\tilde{p}_i\otimes d_{ii}$, which have approximately $\frac{1}{k}$ of the trace of $p_i$.   Likewise, for (\ref{Intro-Sect3.2}), given elements $e_i\in A$, approximately of the form $\tilde{e}_i\otimes 1_k$ in $A\otimes M_k$, we can use the additional space to redistribute the trace of $\tilde{e}_i$ over the diagonal elements of $M_k$ to obtain a $\frac{1}{k}$-approximate projection in trace.

In fact, neither (\ref{Intro-Sect3.2}) nor (\ref{Intro-Sect3.3}) require the full strength of $\mathcal Z$-stability; all that is needed is tracial divisibility of positive elements in an approximately central fashion reminiscent of Murray and von Neumann's property $\Gamma$ for II$_1$ factors. We introduce the concept of \emph{uniform property $\Gamma$} for $\mathrm{C}^*$-algebras in Section \ref{section:gamma}, and use it to obtain complemented partitions of unity in Section \ref{S3}.  The follow-up paper \cite{CETWW} further develops the theory of uniform property $\Gamma$ and complemented partitions of unity, giving other applications to the Toms--Winter conjecture.

\subsection*{Acknowledgments}

We thank Christopher Schafhauser and G\'abor Szab\'o for helpful comments on a preliminary version of this paper.

\numberwithin{theorem}{section}
\section{Preliminaries}

\noindent
For a $\mathrm{C}^*$-algebra $A$, we write $A_+$ for the set of positive elements, $A^1$ for the closed unit ball, and $A_+^1$ for the set of positive contractions. 

\subsection{Nuclear dimension and decomposition rank}\label{SubSecNDim} Let $A$ and $B$ be $\mathrm{C^*}$-algebras. A completely positive map $\phi:A\rightarrow B$ has \emph{order zero} if it preserves orthogonality, i.e., $\phi(x)\phi(y)=0$ whenever $x,y\in A_+$ satisfy $xy=0$. Recall from \cite{WZ10}, that $A$ has \emph{nuclear dimension} at most $n$, written $\dimnuc(A)\leq n$, if there exists a net $(F_i,\psi_i,\phi_i)_i$, where each $F_i$ is a finite dimensional $\mathrm{C}^*$-algebra, $\psi_i:A\rightarrow F_i$ is a completely positive and contractive (c.p.c.) map, $\phi_i:F\rightarrow A$ is a completely positive map which is a sum of at most $n+1$ c.p.c.\ maps of order zero, and $\|\phi_i(\psi_i(a))-a\|\stackrel{i}\rightarrow 0$ for all $a\in A$.
Note that $\|\phi_i\|\leq n+1$.
The stronger (and historically earlier) condition from \cite{KW04} that $A$ has \emph{decomposition rank} at most $n$ is satisfied when in addition, each $\phi_i$ can be taken to be contractive. Recall too that these definitions can be made at the level of maps; see \cite[Definition 2.2]{TikW}.

\subsection{Traces} In this paper a \emph{trace} on a $\mathrm{C}^*$-algebra $A$ means a tracial state, and we denote the collection of all traces by $T(A)$. The set of traces $T(A)$ is equipped with the weak$^*$-topology it inherits from $A^*$, and if $X \subseteq T(A)$ then $\overline{X}$ refers to the closure in this topology.  For a non-empty set $X \subseteq T(A)$, define the seminorm $\|\cdot\|_{2,X}$ on $A$ by
\begin{equation} \|a\|_{2,X} := \sup_{\tau \in X} (\tau(a^*a))^{1/2},\quad a\in A. \end{equation}
Note that $\|\cdot\|_{2,T(A)}$ is a norm if, for every non-zero $a\in A$, there exists $\tau\in T(A)$ with $\tau(a^*a) >  0$, and so in particular when $A$ is simple and $T(A)\neq\emptyset$.    Any $^*$-homomorphism $\phi:A \to B$ is $(\|\cdot\|_{2,T(A)},\|\cdot\|_{2,T(B)})$-contractive (since for $\tau \in T(B)$, $\tau\circ\phi$ is a contractive tracial functional).  The (semi)norm $\|\cdot\|_{2,T(A)}$ plays a notable role in recent structural results (\cite{MS12,KR14,Oz13,BBSTWW}), where it is often denoted $\|\cdot\|_{2,u}$.

\subsection{Ultrapowers}\label{S1.2} Throughout, $\omega$ will stand for a fixed free ultrafilter on $\mathbb N$. 
Given a separable $\mathrm{C}^*$-algebra $A$, the \emph{ultrapower} of $A$ is
\begin{equation}
A_\omega:=\ell^\infty(A)/\{(a_n)_{n=1}^\infty\in\ell^\infty(A):\lim_{n\to\omega}\|a_n\|=0\}.
\end{equation}
When $T(A)\neq \emptyset$, the \emph{uniform tracial ultrapower} is defined by 
\begin{equation}\label{def:uniformtracialultrapower} A^\omega := A_\omega / J_A, \end{equation}
where (abusing notation to use representative sequences in $\ell^\infty(A)$ to denote elements in $A_\omega$), 
\begin{equation} J_{A} := \big\{(a_n)_{n=1}^\infty \in A_\omega: \lim_{n\to\omega} \|a_n\|_{2,T(A)} = 0\big\}, \end{equation}
is the \emph{trace-kernel ideal}.   We will also use representative sequences to stand for elements in $A^\omega$.

Notice that $A$ embeds canonically into $A_\omega$ as constant sequences. We will often regard $A$ as a subalgebra of the ultrapower, for example to form the central sequence algebra $A_\omega\cap A'$. We shall adopt a similar notation in the case of uniform tracial ultrapowers.\footnote{\label{NormFoot}When $\|\cdot\|_{2,T(A)}$ is a norm, $A$ embeds canonically into $A^\omega$ as constant sequences, and we write $A^\omega \cap A'$ for the central sequence algebra. In general only a quotient of $A$ embeds in $A^\omega$; in this case we view $A^\omega$ as an $A$-bimodule, and continue to write $A^\omega\cap A'$ for the $A$-central elements of $A^\omega$.} 

We highlight two examples of the uniform tracial ultrapower.
\begin{enumerate}[(i)]
\item If $A$ has a unique trace $\tau$, then the GNS representation $\pi_\tau$ generates a finite von Neumann factor $\mathcal M:=\pi_\tau(A)''$, which is II$_1$ when it is infinite dimensional.  In this case, the Kaplansky density theorem canonically identifies $A^\omega$ with the II$_1$ factor ultraproduct $\mathcal M^\omega$.\label{TracialUltrapowerEx1}

\item When $A$ is unital, $T(A)$ forms a Choquet simplex. If additionally the extreme boundary $\partial_eT(A)$ is compact, then $A^\omega\cong \mathcal M^\omega$, where $\mathcal M$ is the $\mathrm{W}^*$-bundle obtained from the strict closure of $A$ introduced by Ozawa in \cite{Oz13}, and $\mathcal M^\omega$ is the $\mathrm{W}^*$-bundle ultraproduct defined in \cite[Section 3.2]{BBSTWW}.  This identification uses Kaplansky's density theorem in the strict-topology.\footnote{On $\|\cdot\|$-bounded subsets of $\mathcal M$, the strict topology agrees with the $\|\cdot\|_{2,T(A)}$-topology; see \cite[Proposition 3.2.15]{Ev}.} \label{TracialUltrapowerEx2}
\end{enumerate}

\subsection{Limit traces}
Any sequence of traces $(\tau_n)_{n=1}^\infty$ on $A$ induces a trace on $A_\omega$ by
\begin{equation} (a_n)_{n=1}^\infty \mapsto \lim_{n\to\omega} \tau_n(a_n). \end{equation}
A trace on $A_\omega$ of this form is called a \emph{limit trace}, and we will often denote a limit trace by a sequence $(\tau_n)_{n=1}^\infty$ that induces it. Evidently, every limit trace vanishes on $J_A$, and thereby also induces a trace on $A^\omega$.  We abuse notation, and write $T_\omega(A)$ for the collection of these limit traces on both $A_\omega$ and $A^\omega$; context will make it clear to which ultrapower such a trace applies. With this notation,
\begin{equation}
\label{eq:TraceKernelEq} J_{A} = \{x \in A_\omega: \|x\|_{2,T_\omega(A)} = 0\}. \end{equation}

\subsection{Kirchberg's $\epsilon$-test}
As with the norm ultrapower, the uniform tracial ultrapower lends itself to reindexing or diagonal sequence arguments which can be used to turn statements involving $\|\cdot\|_{2,T(A)}$ approximations in $A$ into exact statements in $A^\omega$.  Kirchberg's $\epsilon$-test (\cite[Lemma A.1]{Kir06}) provides a particularly convenient tool for this type of argument. A number of detailed examples of its application can be found in \cite[Section 1.3]{BBSTWW}; we give a further example here.

\begin{lemma}\label{BallComplete}
Let $A$ be a separable $\mathrm{C}^*$-algebra with $T(A)\neq\emptyset$. Then the unit ball of $A^\omega$ is $\|\cdot\|_{2,T_\omega(A)}$-complete.
\end{lemma}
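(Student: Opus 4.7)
The plan is a diagonalisation argument, where the only mild subtlety is to translate $\|\cdot\|_{2,T_\omega(A)}$-estimates in $A^\omega$ into $\|\cdot\|_{2,T(A)}$-estimates at the level of representing sequences in $A$. Suppose $(x^{(k)})_{k=1}^\infty$ is $\|\cdot\|_{2,T_\omega(A)}$-Cauchy in the unit ball of $A^\omega$. Since a Cauchy sequence with a convergent subsequence converges, I may pass to a subsequence and assume $\|x^{(k+1)}-x^{(k)}\|_{2,T_\omega(A)}<2^{-k}$ for all $k$. Each $x^{(k)}$ admits a representative $(x_n^{(k)})_{n=1}^\infty$ of contractions in $A$, since the quotient maps $\ell^\infty(A)\twoheadrightarrow A_\omega\twoheadrightarrow A^\omega$ are $\mathrm{C}^*$-algebra quotients and so take closed unit ball onto closed unit ball.

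The key identity is that for any $y=(y_n)_{n=1}^\infty\in A_\omega$,
\[
\|y\|_{2,T_\omega(A)}=\lim_{n\to\omega}\|y_n\|_{2,T(A)}.
\]
The inequality $(\leq)$ uses that every $(\tau_n)\in T_\omega(A)$ satisfies $\tau_n(y_n^*y_n)\leq\|y_n\|^2_{2,T(A)}$, while $(\geq)$ follows by choosing, for each $n$, a trace $\tau_n\in T(A)$ with $\tau_n(y_n^*y_n)>\|y_n\|^2_{2,T(A)}-1/n$, so that the limit trace $(\tau_n)\in T_\omega(A)$ witnesses the supremum. Applied to the differences $x^{(k+1)}-x^{(k)}$, this shows that
\[
S_k:=\{n\in\mathbb{N}:\|x_n^{(k+1)}-x_n^{(k)}\|_{2,T(A)}<2^{-k}\}
\]
lies in $\omega$, and hence so does every finite intersection $R_K:=S_1\cap\cdots\cap S_K$.

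Now define $k(n):=\max\{K\in\{0,1,\dots,n\}:n\in R_K\}$ (with the convention $R_0:=\mathbb{N}$) and set $y_n:=x_n^{(k(n))}$, taking $y_n:=0$ on the $\omega$-negligible set where $k(n)=0$. Each $y_n$ is a contraction, and since $\{n:k(n)\geq K\}\supseteq R_K\cap\{n\geq K\}\in\omega$, I have $k(n)\to\infty$ along $\omega$. For any $K$ and any $n$ with $k(n)\geq K$, the telescoping bound
\[
\|y_n-x_n^{(K)}\|_{2,T(A)}\leq\sum_{j=K}^{k(n)-1}\|x_n^{(j+1)}-x_n^{(j)}\|_{2,T(A)}<2^{-K+1}
\]
holds (using $n\in R_{k(n)}\subseteq S_j$ for $K\leq j<k(n)$), and so by the key identity $\|y-x^{(K)}\|_{2,T_\omega(A)}\leq 2^{-K+1}$. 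Thus $y\in A^\omega$ has norm at most $1$ and is the required limit. The only conceptual obstacle is justifying the identity above, which converts control at the level of limit traces into uniform control via $\|\cdot\|_{2,T(A)}$; Kirchberg's $\epsilon$-test provides an alternative organisational framework, but the direct diagonalisation presented here seems cleanest.
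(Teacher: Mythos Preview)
Your proof is correct and takes a genuinely different route from the paper's. The paper invokes Kirchberg's $\epsilon$-test: it encodes the conditions ``$\|y-x^{(k)}\|_{2,T_\omega(A)}\leq\epsilon^{(k)}$'' as a countable family of test functions $f_n^{(k)}$ on $A^1$, verifies that for each fixed $k_0$ the element $x^{(k_0)}$ approximately satisfies the first $k_0$ conditions, and then lets the $\epsilon$-test produce a simultaneous approximate solution. Your argument instead passes to a rapidly Cauchy subsequence and performs an explicit diagonalisation over the index $n$, using the identity $\|y\|_{2,T_\omega(A)}=\lim_{n\to\omega}\|y_n\|_{2,T(A)}$ to transfer estimates between $A^\omega$ and representing sequences. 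Your approach is more self-contained and arguably more transparent for this particular statement; the paper's approach has the advantage of fitting into its broader framework, where Kirchberg's $\epsilon$-test is the standard organisational device for converting approximate statements in $A$ into exact ones in $A^\omega$, and so the proof here doubles as a warm-up illustration of that machinery. Both arguments ultimately rest on the same underlying observation you isolate as the ``key identity''.
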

\begin{proof}
Let $(x^{(k)})_{k=1}^\infty$ be a $\|\cdot\|_{2,T_\omega(A)}$-Cauchy sequence in $(A^\omega)^1$.
Take a representative sequence $(x^{(k)}_n)_{n=1}^\infty$ in $A^1$ for each $x^{(k)}$, and set
\begin{equation}
\epsilon^{(k)}:=\sup\big\{\|x^{(l)}-x^{(l')}\|_{2,T_\omega(A)}:l,l'\geq k\big\}.
\end{equation}
Define $f^{(k)}_n:A^1\rightarrow[0,2]$ by $f^{(k)}_n(y):=\max\{\|y-x^{(k)}_n\|_{2,T(A)}-\epsilon^{(k)},0\}$.

For $k_0\in\mathbb N$, we have $\lim_{n\rightarrow\omega}f^{(k)}_n(x^{(k_0)}_n)=0$, for $k=1,\dots,k_0$.  
Kirchberg's $\epsilon$-test (\cite[Lemma A.1]{Kir06}), gives a sequence $(x_n)_{n=1}^\infty$ in $A^1$ representing some $x\in A^\omega$ with $\lim_{n\rightarrow\omega}f^{(k)}_n(x_n)=0$ for all $k\in\mathbb N$. Therefore
\begin{equation}
\|x-x^{(k)}\|_{2,T_\omega(A)}\leq\epsilon^{(k)},\quad k\in\mathbb N.
\end{equation}
As $(x^{(k)})_{k=1}^\infty$ is $\|\cdot\|_{2,T_\omega(A)}$-Cauchy, we have $\epsilon^{(k)}\rightarrow 0$ and so $x^{(k)}\rightarrow x$.
\end{proof}

\subsection{Central surjectivity}
The following central surjectivity result is a key tool in transferring structural properties from $A^\omega$ to $A_\omega$ via Matui--Sato's property (SI).  It has its origins in Sato's work \cite{Sa11}, and was established in general by Kirchberg and R\o{}rdam.  Part (i) of the lemma below follows by combining \cite[Propositions 4.5(iii) and 4.6]{KR14}.\footnote{Note that while \cite[Proposition 4.6]{KR14} has a hypothesis that  $A$ is unital, for the proof the unit can be taken in the minimal unitisation.} 
\begin{lemma}[Central surjectivity]
\label{lem:CentralSurjection}
Let $A$ be a separable $\mathrm{C}^*$-algebra with $T(A)\neq \emptyset$.
Let $S \subset A_\omega$ be a separable subset and let $\bar{S} \subset A^\omega$ be its image.
Then
\begin{enumerate}
\item[\rm{(i)}]  the image of $A_\omega \cap S'$ in $A^\omega$ is $A^\omega \cap \bar{S}'$ (the commutant of $\bar{S}$ in $A^\omega$), and
\item[\rm{(ii)}]  for an $h\in (A_\omega\cap S')_+^1$ whose image $\bar{h}$ in $A^\omega$ is a projection, the image of $\{x\in A_\omega\cap S':hx=x\}$ is $\{y\in A^\omega\cap \bar{S}':\bar{h}y=y\}$.
\end{enumerate}
\end{lemma}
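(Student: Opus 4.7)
The plan is to deduce both (i) and (ii) from a single structural fact: $J_A$ is a \emph{$\sigma$-ideal} of $A_\omega$ in the sense of Kirchberg, i.e., for every separable $C^*$-subalgebra $D\subseteq A_\omega$ there is a positive contraction $e\in J_A\cap D'$ satisfying $ej=j$ (in norm) for all $j\in J_A\cap D$. Once this is available, which is precisely \cite[Proposition 4.5(iii)]{KR14}, both parts follow from a short ``lift-and-correct'' manipulation inside $A_\omega$. The only other ingredient needed is that $J_A$ is a norm-closed two-sided ideal of $A_\omega$; this is routine, using the estimate $\|aj\|_{2,T_\omega(A)}\leq\|a\|\,\|j\|_{2,T_\omega(A)}$ together with the trace identity $\tau(ab)=\tau(ba)$ for the other side.

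For (i), the forward inclusion of the image of $A_\omega\cap S'$ into $A^\omega\cap\bar S'$ is immediate since the quotient map $A_\omega\to A^\omega$ is a $*$-homomorphism. For the converse, lift $y\in A^\omega\cap\bar S'$ to any $x\in A_\omega$; then $[x,s]\in J_A$ for each $s\in S$. Pick a countable dense $S_0\subseteq S$ and set $D:=C^*\bigl(S_0\cup\{x\}\cup\{[x,s]:s\in S_0\}\bigr)$. The $\sigma$-ideal property applied to $D$ yields a positive contraction $e\in J_A\cap D'$ with $e[x,s]=[x,s]$ for every $s\in S_0$. Put $x':=x-ex$. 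Then for $s\in S_0$, since $[e,s]=0$,
\begin{equation}
[x',s]=[x,s]-e[x,s]=0,
\end{equation}
and commutation with all of $S$ follows by norm continuity. Since $e\in J_A$ and $J_A$ is an ideal, $x-x'=ex\in J_A$, so $x'\in A_\omega\cap S'$ also lifts $y$.

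For (ii), we run the same pattern starting from a lift supplied by (i). Given $y\in A^\omega\cap\bar S'$ with $\bar h y=y$, lift $y$ via (i) to $x_0\in A_\omega\cap S'$; the assumption $\bar h y=y$ gives $x_0-hx_0\in J_A$. Apply the $\sigma$-ideal property to $D:=C^*\bigl(S_0\cup\{h,x_0,hx_0\}\bigr)$, with $S_0$ countable dense in $S$, to obtain $e\in J_A\cap D'$ with $e(x_0-hx_0)=x_0-hx_0$. Put $x:=x_0-ex_0$. Then $x\in A_\omega\cap S'$ (both $e$ and $x_0$ commute with $S$), $x-x_0=-ex_0\in J_A$ (so $x$ lifts $y$), and, using that $e$ commutes with $h$,
\begin{equation}
x-hx=(x_0-hx_0)-e(x_0-hx_0)=0,
\end{equation}
giving $hx=x$ exactly.

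The main obstacle in executing this plan is the $\sigma$-ideal statement itself. Because $J_A$ is defined through vanishing of the uniform $2$-seminorm rather than as a norm-closed ideal in the classical sense, one cannot just invoke Kasparov's quasicentral approximate units: instead one needs to produce elements of $J_A$ that simultaneously almost commute in \emph{norm} with a separable subset of $A_\omega$ and act as an approximate identity in norm on prescribed elements of $J_A$. The route in \cite{KR14} is to reduce to an approximate problem in $A$ itself using Kirchberg's $\epsilon$-test (already seen in Lemma \ref{BallComplete}), and then to solve each finitary instance by a Kasparov-style averaging adapted to the tracial seminorm. This combined dual-approximation step is the technical heart of the lemma.
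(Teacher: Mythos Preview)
Your proof is correct. For part (i) it is essentially the argument of \cite[Proposition 4.6]{KR14}, which is precisely what the paper cites, so there is no real difference there.

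For part (ii) you take a genuinely different route from the paper. The paper first lifts $y$ to some $\tilde x\in A_\omega\cap S'$ via (i), then considers $f_n(h)\tilde x$ for bump functions $f_n\in C_0((0,1])$ supported near $1$ with $f_n(1)=1$; since $\bar h$ is a projection, each $f_n(h)\tilde x$ still maps to $\bar h y=y$, while $\|hf_n(h)\tilde x - f_n(h)\tilde x\|\leq 1/n$, and Kirchberg's $\epsilon$-test then produces the exact lift. You instead reapply the $\sigma$-ideal property to the separable algebra generated by $S_0\cup\{h,x_0,hx_0\}$ and correct via $x=x_0-ex_0$; commutation of $e$ with $h$ immediately gives $hx=x$. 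Your approach has the virtue of uniformity: the same lift-and-correct manoeuvre handles both statements, and no further appeal to the $\epsilon$-test is needed. The paper's argument, on the other hand, avoids invoking the $\sigma$-ideal machinery a second time and makes do with elementary functional calculus once (i) is available.
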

\begin{proof}[Proof of \rm{(ii)}]
Given $y\in A^\omega\cap \bar{S}'$ with $\bar{h}y=y$, lift $y$ to $\tilde{x}\in A_\omega\cap S'$ by (i).  
For each $n\in \mathbb N$, choose a positive contraction $f_n \in C_0((0,1])_+$ such that $f_n(1)=1$ and which is supported on $(1-1/n,1]$. Then as $\bar{h}$ is a projection, $f_n(h)\tilde{x}$ maps to $\bar{h}y=y\in A^\omega$ for each $n$. Moreover, we have $\|hf_n(h)-f_n(h)\| \leq 1/n$, and thus $\|hf_n(h)\tilde{x}-f_n(h)\tilde{x}\|\rightarrow 0$.  By Kirchberg's $\epsilon$-test (\cite[Lemma A.1]{Kir06}), there exists some $x\in A_\omega\cap S'$ with $hx=x$ which maps to $y\in A^\omega$.
\end{proof}

\subsection{Compactness of $T(A)$} Throughout the rest of the paper, our standard assumption will be that $T(A)$ is non-empty and compact. We end this section by recording two consequences of this.

\begin{lemma}\label{lem:ZeroInAomega:easy}
Let $A$ be a separable $\mathrm{C}^*$-algebra with $T(A)$ non-empty and compact. Let $\iota:A\rightarrow A^\omega$ denote the canonical $^*$-homomorphism.\footnote{Which is an embedding when $\|\cdot\|_{2,T(A)}$ is a norm.} If $x \in A^\omega$ satisfies $\iota(a)x=x\iota(a)=0$ for all $a\in A$ then $x=0$.
\end{lemma}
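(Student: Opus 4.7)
The plan is to exploit the compactness of $T(A)$ to show that an approximate unit of $A$ converges to the identity in $\|\cdot\|_{2,T_\omega(A)}$, and then use the hypothesis $\iota(a)x=0$ to annihilate $x$ against elements of this approximate unit.

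First I would fix an increasing approximate unit $(e_n)_{n=1}^\infty \subseteq A_+^1$. For each $\tau \in T(A)$, the net $(\tau(e_n))_{n=1}^\infty$ increases to $\|\tau\|=1$; since each map $\tau \mapsto \tau(e_n)$ is weak$^*$-continuous and $T(A)$ is compact, Dini's theorem gives the uniform convergence
\begin{equation}
\sup_{\tau \in T(A)} (1 - \tau(e_n)) \xrightarrow{n\to\infty} 0.
\end{equation}

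Next, given $x \in A^\omega$ (which we may assume to be a contraction by rescaling) with representative $(x_m)_{m=1}^\infty \in \ell^\infty(A)^1$, I would estimate $\|x - \iota(e_n) x\|_{2,T_\omega(A)}$. For any sequence $(\tau_m)_{m=1}^\infty \subseteq T(A)$, working in $A^{**}$ and using $0 \leq (1-e_n)^2 \leq 1-e_n$ together with the trace property,
\begin{equation}
\tau_m\bigl((x_m - e_n x_m)^*(x_m - e_n x_m)\bigr) = \tau_m\bigl(x_m^*(1-e_n)^2 x_m\bigr) \leq \tau_m\bigl((1-e_n)^{1/2} x_mx_m^*(1-e_n)^{1/2}\bigr) \leq 1 - \tau_m(e_n).
\end{equation}
Taking the ultralimit and then the supremum over sequences $(\tau_m)$ shows
\begin{equation}
\|x - \iota(e_n) x\|_{2,T_\omega(A)}^2 \leq \sup_{\tau \in T(A)} (1 - \tau(e_n)),
\end{equation}
which tends to $0$ as $n \to \infty$ by the first step.

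Finally, the hypothesis gives $\iota(e_n) x = 0$ for every $n$, so the previous estimate yields $\|x\|_{2,T_\omega(A)} = 0$. By \eqref{eq:TraceKernelEq} and the definition \eqref{def:uniformtracialultrapower} of $A^\omega$, this forces $x=0$. The only subtlety is handling the symbol $1-e_n$ when $A$ is non-unital, which is addressed by either working inside $A^{**}$ or by observing that the expressions $x_m^*x_m - x_m^*e_nx_m$ land in $A$ regardless, so all tracial computations are well-defined; this is the minor technicality rather than a genuine obstacle.
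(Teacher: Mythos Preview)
Your proof is correct. Both your argument and the paper's hinge on the same underlying fact --- compactness of $T(A)$ forces elements of an approximate unit to be uniformly close to $1$ in trace --- but the packaging differs. The paper argues by contradiction: assuming $\tau(x^*x)\geq\epsilon$ for some limit trace $\tau$, it uses compactness of $T(A)$ to see that $\tau\circ\iota$ is a genuine tracial \emph{state} (not merely a functional of norm $\leq 1$), picks a single $a\in A_+^1$ with $\tau(\iota(a))>1-\epsilon$, and then observes that orthogonality makes $\iota(a)+x^*x$ a positive contraction, so $\tau(\iota(a))+\tau(x^*x)\leq 1$, a contradiction. You instead invoke Dini's theorem to get uniform convergence $\sup_{\tau}(1-\tau(e_n))\to 0$ along a whole approximate unit, and then estimate $\|x-\iota(e_n)x\|_{2,T_\omega(A)}$ directly. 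Your route avoids the contradiction and the ``$\iota(a)+x^*x$ is a contraction'' trick, trading them for Dini and a $2$-norm computation; the paper's route is marginally shorter and works trace-by-trace rather than uniformly. Neither approach has a real advantage here, and both generalise in the expected ways.
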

\begin{proof}
Assume that $x$ is a contraction, and for a contradiction that for some $\tau\in T_\omega(A)$ we have $\tau(x^*x)\geq\epsilon>0$.
Let $\tau=(\tau_n)_{n=1}^\infty$ with $\tau_n \in T(A)$ for all $n$. Since $\tau\circ\iota$, which a priori is a tracial functional on $A$ of norm at most $1$, is the limit as $n\rightarrow\omega$ of the tracial states $\tau_n$, by compactness of $T(A)$, $\tau\circ\iota$ is a tracial state on $A$. Therefore there exists $a\in A_+^1$ with $\tau(\iota(a))>1-\epsilon$. 
The hypothesis that $x$ is orthogonal to $A$ ensures that $\iota(a)+x^*x$ is a contraction. 
Thus,
\begin{equation} 1 \geq \tau(\iota(a)+x^*x) = \tau(\iota(a))+\tau(x^*x) > 1-\epsilon+\epsilon = 1, \end{equation}
a contradiction. Thus $\|x\|_{2,\tau}=0$ for all $\tau\in T_\omega(A)$, so $x=0$.
\end{proof}

\begin{proposition}
\label{lem:ZeroInAomega}
Let $A$ be a separable $\mathrm{C}^*$-algebra with $T(A)\neq \emptyset$. Then $A^\omega$ is unital if and only if $T(A)$ is compact. 
\end{proposition}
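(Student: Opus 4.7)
The plan is to prove the two implications separately. When $A$ is unital both conditions hold trivially ($1_A$ is a unit of $A^\omega$ and $T(A)$ is a weak*-closed subset of the unit ball of $A^*$), so one can focus on non-unital $A$.

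For the direction ``$T(A)$ compact $\Rightarrow A^\omega$ unital'', I would take an increasing contractive approximate unit $(u_n)_{n=1}^\infty$ for $A$. Since each $\tau(u_n)$ increases to $\|\tau\|=1$, Dini's theorem on the compact space $T(A)$ gives $\epsilon_n := \sup_{\tau\in T(A)}(1-\tau(u_n))\to 0$. The plan is then to show $(u_n)$ represents a unit of $A^\omega$. The projection relation follows from $u_n-u_n^2\leq 1-u_n$ in $A$, yielding $\|u_n^2-u_n\|_{2,T(A)}^2\leq\epsilon_n\to 0$. For any bounded representative $(x_n)\in\ell^\infty(A)$ of $x\in A^\omega$, the inequality $(1-u_n)^2\leq 1-u_n$ combined with traciality gives
\[ \|u_n x_n-x_n\|_{2,T(A)}^2=\sup_\tau\tau\big(x_n^*(1-u_n)^2 x_n\big)\leq\|x_n\|^2\epsilon_n\to 0, \]
so $(u_n)\cdot x=x$ in $A^\omega$, and the symmetric estimate gives $x\cdot(u_n)=x$.

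For the converse ``$A^\omega$ unital $\Rightarrow T(A)$ compact'', lift the unit of $A^\omega$ to a representative $(e_n)$ of positive contractions in $A$ (applying functional calculus to any representative). The crux is to establish the uniform estimate
\[ \sup_{\tau\in T(A)}(1-\tau(e_n))\to 0\quad\text{along }\omega, \]
which I would prove by contradiction. Suppose there exist $\epsilon>0$, $S\in\omega$, and $\sigma_n\in T(A)$ with $\sigma_n(e_n)\leq 1-\epsilon$ for $n\in S$. Pick $x_n\in A_+^1$ with $\sigma_n(x_n)>1-1/n$ (using that $\sigma_n$ is a state). Unitality of $(e_n)$ applied to the bounded sequence $(x_n)$ gives $\|e_nx_n-x_n\|_{2,T(A)}\to 0$ along $\omega$, hence $|\sigma_n(e_nx_n)-\sigma_n(x_n)|\to 0$ by Cauchy--Schwarz, so $\sigma_n(e_nx_n)\to 1$ along $\omega$. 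A second Cauchy--Schwarz then gives $|\sigma_n(e_nx_n)|^2\leq\sigma_n(e_n^2)\sigma_n(x_n^2)\leq\sigma_n(e_n)$, whence $\sigma_n(e_n)\to 1$ along $\omega$, contradicting $\sigma_n(e_n)\leq 1-\epsilon$.

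Once the uniform estimate is in hand, compactness is immediate: if $(\tau_k)$ in $T(A)$ weak*-converges to $\sigma\in A^*$, then $\sigma$ is automatically positive and tracial with $\|\sigma\|\leq 1$. For each $\delta>0$, choosing $n$ with $\tau(e_n)>1-\delta$ for all $\tau\in T(A)$ yields $\sigma(e_n)=\lim_k\tau_k(e_n)\geq 1-\delta$, so $\|\sigma\|\geq 1-\delta$, whence $\|\sigma\|=1$ and $\sigma\in T(A)$. Separability of $A$ makes the weak* unit ball of $A^*$ metrizable, so sequential closure of $T(A)$ gives weak*-compactness. The main obstacle I expect is the Cauchy--Schwarz chain producing $\sigma_n(e_n)\to 1$; it is essential that unitality of $p$ is applied at the level of the $n$-varying bounded sequence $(x_n)$ rather than merely to constant elements of $A$, as the latter would give only pointwise control on $T(A)$, which is insufficient for extracting the uniform estimate needed for compactness.
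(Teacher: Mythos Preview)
Your proof is correct in both directions, but the two halves compare differently with the paper.

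For the direction ``$T(A)$ compact $\Rightarrow A^\omega$ unital'', your argument is more elementary than the paper's. The paper takes an approximate unit $(e_n)$, notes that $e:=(e_n)$ acts as identity on the canonical image of $A$, and then --- for an arbitrary $x\in A^\omega$ --- invokes Kirchberg's $\epsilon$-test to produce some $f\in (A^\omega)_+^1$ with $fx=x$ and $fa=a$ for $a\in A$; it then applies the preceding Lemma~\ref{lem:ZeroInAomega:easy} to conclude $e=f$. Your Dini-plus-direct-estimate argument avoids both the $\epsilon$-test and the auxiliary lemma, at the modest cost of needing to work in the unitisation to justify $\tau(x_n^*(1-u_n)^2x_n)\le\|x_n\|^2(1-\tau(u_n))$ via $\tau((1-u_n)x_nx_n^*(1-u_n))\le\|x_n\|^2\tau((1-u_n))$.

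For the converse, your argument is correct but more circuitous. The paper observes directly that any limit trace $(\sigma_n)_{n=1}^\infty\in T_\omega(A)$ is a trace on the unital algebra $A^\omega$ and hence sends $1_{A^\omega}$ to $1$; this immediately gives $\lim_{n\to\omega}\sigma_n(e_n)=1$ for \emph{every} sequence $(\sigma_n)$ in $T(A)$, which is exactly the uniform estimate $\lim_{n\to\omega}\inf_{\tau\in T(A)}\tau(e_n)=1$. Your auxiliary sequence $(x_n)$ and the two Cauchy--Schwarz steps recover this same conclusion, but the limit-trace observation makes them unnecessary. The final compactness step (showing $T(A)$ is weak$^*$-closed in the unit ball of $A^*$) is essentially the same in both proofs; the paper uses nets rather than invoking metrizability, but this is cosmetic.
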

 
\begin{proof}
Suppose $T(A)$ is compact. Let $(e_n)_{n=1}^\infty$ be an approximate unit for $A$, and let $e=(e_n)_{n=1}^\infty \in (A^\omega)_+^1$, so that $ea=a$ for all $a \in A$.
Given any element $x\in A^\omega$, a standard application of Kirchberg's $\epsilon$-test gives $f\in (A^\omega)_+^1$ such that $fx=x$ and $fa=a$ for all $a\in A$ (see \cite[Lemma 1.16]{BBSTWW} with $S_1=S_2=\{0\}$). 
Then $(e-f)a=0$ for all $a \in A$, whence $e=f$ by Lemma \ref{lem:ZeroInAomega:easy}. 
It follows that $ex=fx=x$.
Thus, $e$ was already a unit for $A^\omega$.

Conversely, if $A^\omega$ is unital, represent $1_{A^\omega}$ by a sequence $(e_n)_{n=1}^\infty$ in $A_+^1$. 
Writing $\gamma_n:=\inf_{\tau\in T(A)} \tau(e_n)$, since $\tau(1_{A^\omega})=1$ for all $\tau \in T_\omega(A)$, we have $\lim_{n\to\omega}\gamma_n=1$. Now suppose that some net $(\tau_i)$ in $T(A)$ converges to a functional $\sigma$, so that $\sigma$ is a tracial functional and $\|\sigma\|\leq 1$. Then $\sigma(e_n)=\lim_i\tau_i(e_n)\geq \gamma_n \rightarrow 1$, so $\sigma$ is a state on $A$.
This shows that $T(A)$ is weak$^*$-closed in the (compact) unit ball of $A^*$.
\end{proof}

\section{Uniform property $\Gamma$ for $\mathrm{C}^*$-algebras}\label{section:gamma}

\noindent
Property $\Gamma$ for II$_1$ factors was introduced by Murray and von Neumann in \cite{MvN43} to prove the existence of non-hyperfinite II$_1$ factors. In modern language, a separably acting II$_1$ factor $\mathcal M$ has property $\Gamma$ if its central sequence algebra\footnote{Here $\mathcal M^\omega$ refers to the tracial von Neumann ultrapower of $\mathcal M$, which is again a II$_1$ factor with the induced trace $\tau_{\mathcal M^\omega}$ from $\mathcal M$.} $\mathcal M^\omega\cap \mathcal M'$ is non-trivial. In this case Dixmier showed that $\mathcal M^\omega\cap \mathcal M'$ is diffuse (\cite{Di69}), and hence for each $n\in\mathbb N$ one can find orthogonal projections $p_1,\dots,p_n\in\mathcal M^\omega\cap\mathcal M'$ with $\tau_{\mathcal M^\omega}(p_i)=1/n$. It is in this formulation that property $\Gamma$ has been most often used to obtain striking structural consequences; see \cite{Pi01,Chr01,CPSS03}.

In this section we introduce a uniform version of property $\Gamma$ for $\mathrm{C}^*$-algebras, motivated by Dixmier's formulation.\footnote{Versions of property $\Gamma$ for $\mathrm{C}^*$-algebras have been considered previously in connection with the similarity property \cite{GJS,QS16}.  For example, \cite{QS16} considers those $\mathrm{C}^*$-algebras all of whose II$_1$ factor representations have property $\Gamma$. The key difference between this concept and our notion is that uniform property $\Gamma$ requires division uniformly in all traces, as opposed to pointwise in trace.}  Recall our convention that $A^\omega$ denotes the \emph{uniform tracial ultrapower} from \eqref{def:uniformtracialultrapower}, which is unital when $T(A)$ is non-empty and compact, and $A^\omega\cap A'$ consists of the $A$-central elements of $A^\omega$ (see footnote \ref{NormFoot}).

\begin{definition}\label{defn:Gamma}
Let $A$ be a separable $\mathrm{C}^*$-algebra with $T(A)$ non-empty and compact.
We say that $A$ has \emph{uniform property $\Gamma$} if for all $n\in\mathbb N$, there exist projections $p_1,\dots,p_n\in A^\omega \cap A'$ summing to $1_{A^\omega}$, such that
\begin{equation}\label{gamma.def}
\tau(ap_i)=\tfrac{1}{n}\tau(a),\quad a\in A,\ \tau\in T_\omega(A),\ i=1,\dots,n.
\end{equation}
\end{definition}
Note that the projections $p_1,\dots,p_n$ in Definition \ref{defn:Gamma} are necessarily pairwise orthogonal.

While we insist in Definition \ref{defn:Gamma} that every element $a\in A$ can be `tracially divided' in an approximately central fashion (as opposed to just requiring division of the unit), this does parallel the II$_1$ factor setting.  If $\mathcal M$ is a II$_1$ factor and $p\in \mathcal M^\omega\cap M'$, then $\tau_{\mathcal M^\omega}(p\,\cdot)$ defines a tracial functional on $\mathcal M$, and hence by uniqueness of the trace on $\mathcal M$,
\begin{equation}\label{gamma.VNA}
	\tau_{\mathcal M^\omega}(pa)=\tau_{\mathcal M^\omega}(p)\tau_{\mathcal M}(a),\quad a\in \mathcal M.
\end{equation}
So in the II$_1$ factor setting, $p$ divides arbitrary elements in trace, in just the same way it divides the unit.  This is not true for $\mathrm{C}^*$-algebras whose trace space is non-Bauer; \cite[Example 3.3]{CETWW} gives an example. 

\smallskip
Kirchberg's $\epsilon$-test or other reindexing methods allow us to replace the central sequence algebra with the relative commutant of an arbitrary separable subset. The argument is standard and we omit the proof (cf.\ \cite[Proposition 4.4]{Wi12} for an example of such a reindexing argument). 

\begin{lemma}
\label{lem:GammaS}
Let $A$ be a separable $\mathrm{C}^*$-algebra with $T(A)$ non-empty and compact. Suppose $A$ has uniform property $\Gamma$.
Then for $n \in \mathbb N$ and a $\|\cdot\|_{2,T_\omega(A)}$-separable subset $S$ of $A^\omega$, 
there exist projections $p_1,\dots,p_n\in A^\omega \cap S'$ summing to $1_{A^\omega}$, such that
\begin{equation}
\tau(ap_i)=\tfrac{1}{n}\tau(a),\quad a\in S,\ \tau\in T_\omega(A),\ i=1,\dots,n.
\end{equation}
\end{lemma}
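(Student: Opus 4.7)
The plan is the standard reindexing / diagonal-sequence argument via Kirchberg's $\epsilon$-test that is alluded to just before the statement. First I observe that both conditions on the projections are $\|\cdot\|_{2,T_\omega(A)}$-Lipschitz continuous in $s\in A^\omega$ (the commutator via $\|[p_i,\cdot]\|_{2,T_\omega(A)}\leq 2\|p_i\|\|\cdot\|_{2,T_\omega(A)}$, the trace equation via Cauchy--Schwarz, using $\|p_i\|_{2,\tau}\leq 1$). Hence it suffices to verify them on a countable $\|\cdot\|_{2,T_\omega(A)}$-dense sequence $(s_k)_{k=1}^\infty\subset S$; fix such a sequence and lift each $s_k$ to a representative $(s_{k,m})_{m=1}^\infty\in A^1$.

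Next I apply uniform property $\Gamma$ of $A$ once to obtain projections $Q_1,\dots,Q_n\in A^\omega\cap A'$ summing to $1_{A^\omega}$ with $\tau(aQ_i)=\tfrac{1}{n}\tau(a)$ for all $a\in A$, $\tau\in T_\omega(A)$, and lift each $Q_i$ to a sequence $(Q_{i,m'})_{m'=1}^\infty\in A^1_+$. Along $m'\to\omega$ the defects from being projections ($\|Q_{i,m'}-Q_{i,m'}^2\|_{2,T(A)}$ and $\|Q_{i,m'}Q_{j,m'}\|_{2,T(A)}$ for $i\neq j$), from summing to $1_{A^\omega}$, and, for each fixed $a\in A$, both $\|[Q_{i,m'},a]\|_{2,T(A)}$ and $\sup_{\tau\in T(A)}|\tau(aQ_{i,m'})-\tfrac{1}{n}\tau(a)|$, all tend to $0$. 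The essential point is that each $s_{k,m}$ lies in $A$, so these asymptotic statements apply with $a=s_{k,m}$ for any fixed pair $(k,m)$.

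A diagonal selection then closes the argument. At each level $m$ I choose $m'(m)$ from the ultrafilter $\omega$ large enough that, for the finitely many indices $k\leq m$ together with the $m$-independent defects, every one of the quantities above (for $a=s_{k,m}$) is less than $1/m$. Setting $p_{i,m}:=Q_{i,m'(m)}$, each defect goes to $0$ as $m\to\omega$ (for any fixed $k$, the $k$-th defect is $<1/m$ once $m\geq k$), and the resulting elements $p_i:=(p_{i,m})_m\in A^\omega$ are orthogonal projections (orthogonality being automatic from summing to the projection $1_{A^\omega}$) in $A^\omega\cap\{s_k\}'$, summing to $1_{A^\omega}$, with $\tau(s_kp_i)=\tfrac{1}{n}\tau(s_k)$ for all $k$ and $\tau\in T_\omega(A)$. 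The initial Lipschitz observation extends the last two properties to all of $S$.

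The argument is essentially bookkeeping, and packaging the diagonal step via Kirchberg's $\epsilon$-test (\cite[Lemma A.1]{Kir06}) is equally possible. The only real conceptual point, which makes the whole thing work, is that a \emph{single} application of uniform property $\Gamma$ to $A$ suffices: each level-$m$ representative $s_{k,m}$ of $s_k\in A^\omega$ already lies inside $A$, so the $A$-centrality and $A$-tracial-division of the $Q_i$ can be leveraged level-by-level, and the diagonal alone bridges the remaining passage from the approximating sequence $(s_{k,m})_m$ to its class $s_k\in A^\omega$ outside the image of $A$.
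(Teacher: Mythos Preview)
Your argument is correct and is precisely the standard reindexing (diagonal-sequence) argument that the paper alludes to before omitting the proof. The key idea you identify --- that a single application of uniform property $\Gamma$ suffices because each level-$m$ representative $s_{k,m}$ lies in $A$ --- is exactly the point, and your packaging via an explicit diagonal choice (or equivalently Kirchberg's $\epsilon$-test) matches the paper's intended approach.
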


In this paper, $\mathcal Z$-stable $\mathrm{C}^*$-algebras are the most important instances of uniform property $\Gamma$; for later use we record a stronger statement.
\begin{proposition}
\label{prop:GammaZ}
Suppose $A$ is a separable, $\mathcal Z$-stable $\mathrm{C}^*$-algebra with $T(A)$ non-empty and compact.  Then for any $\|\cdot\|_{2,T_\omega(A)}$-separable subset $S$ of $A^\omega$ and any $n \in \mathbb N$, there is a unital embedding $M_n \to A^\omega \cap S'$.
In particular, $A$ has uniform property $\Gamma$.
\end{proposition}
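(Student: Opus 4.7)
The plan is to leverage $\mathcal{Z}$-stability to produce a unital $*$-homomorphism $\mathcal{Z} \to A^\omega \cap S'$, then exploit the unique trace on $\mathcal Z$ to extend this tracially to a unital embedding of the hyperfinite $\mathrm{II}_1$ factor $\mathcal R$ (the GNS-completion of $\mathcal Z$), which contains $M_n$ unitally.

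First I reduce $S$ to a countable generating set: pick a countable $\|\cdot\|_{2,T_\omega(A)}$-dense subset $\{y_k\}$ of $S$, lift each $y_k$ to a contraction $x_k \in A_\omega$ under the quotient $q: A_\omega \to A^\omega$, and let $B \subseteq A_\omega$ be the separable $\mathrm{C}^*$-subalgebra generated by $\{x_k\}$. By the standard characterisation of $\mathcal Z$-stability via the central sequence algebra (\cite{TW07,RW10}) combined with a reindexing argument, there is a $*$-homomorphism $\phi: \mathcal Z \to A_\omega \cap B'$ with $\phi(1_{\mathcal Z})$ acting as a unit on $A$; since $A^\omega$ is unital by Proposition \ref{lem:ZeroInAomega}, the composition $\bar\phi := q\circ\phi: \mathcal Z \to A^\omega \cap q(B)'$ is a \emph{unital} $*$-homomorphism.

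Next, for any $\tau \in T_\omega(A)$, the composite $\tau \circ \bar\phi$ is a tracial state on $\mathcal Z$ and hence equals the unique trace $\tau_{\mathcal Z}$; this makes $\bar\phi$ an isometry from $(\mathcal Z, \|\cdot\|_{2,\tau_{\mathcal Z}})$ into $(A^\omega, \|\cdot\|_{2,T_\omega(A)})$. Fix a unital embedding $M_n \hookrightarrow \mathcal R := \pi_{\tau_{\mathcal Z}}(\mathcal Z)''$; by Kaplansky density each matrix unit $e_{ij}$ is the $\|\cdot\|_{2,\tau_{\mathcal Z}}$-limit of a sequence of contractions $z_{ij,k} \in \mathcal Z$. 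Then $(\bar\phi(z_{ij,k}))_k$ is a $\|\cdot\|_{2,T_\omega(A)}$-Cauchy sequence of contractions in $A^\omega$, converging by Lemma \ref{BallComplete} to some $u_{ij} \in A^\omega$. Using the elementary bound $\|xy\|_{2,T_\omega(A)} \leq \|x\|\cdot\|y\|_{2,T_\omega(A)}$, the matrix unit relations, involutivity, and the identity $\sum_i e_{ii} = 1$ transfer to the $u_{ij}$, so $e_{ij} \mapsto u_{ij}$ defines a unital $*$-homomorphism $\tilde\phi: M_n \to A^\omega$. The same continuity shows $\tilde\phi(M_n) \subseteq A^\omega \cap q(B)'$, and approximating each $y \in S$ by $y_k \in q(B)$ upgrades this to $\tilde\phi(M_n) \subseteq A^\omega \cap S'$.

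For the final claim, specialise to $S = A$ and set $p_i := \tilde\phi(e_{ii})$: these are pairwise orthogonal projections in $A^\omega \cap A'$ summing to $1_{A^\omega}$. Since $M_n$ is simple and $\tilde\phi(M_n)$ commutes with $A$, the $\mathrm{C}^*$-subalgebra they generate in $A^\omega$ is a quotient of $A \otimes M_n$, and any $\tau \in T_\omega(A)$ restricted there factors as $\tau|_A \otimes \mathrm{tr}_{M_n}$ (a direct calculation, since off-diagonal matrix units have trace zero and the diagonal matrix units are mutually conjugate), giving \eqref{gamma.def}. I anticipate the most delicate point to be the first step --- in the possibly non-unital setting, confirming via Lemma \ref{lem:ZeroInAomega:easy} that $\phi(1_{\mathcal Z})$ really represents $1_{A^\omega}$ --- while the tracial extension from $\mathcal Z$ to $\mathcal R$ is a routine consequence of Lemma \ref{BallComplete} and the $\|\cdot\|_{2,T_\omega(A)}$-continuity of multiplication by bounded elements.
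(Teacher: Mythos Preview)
Your proposal is correct and follows essentially the same approach as the paper: obtain a unital $*$-homomorphism $\mathcal Z \to A^\omega \cap S'$ from $\mathcal Z$-stability, extend it to $\pi_{\tau_{\mathcal Z}}(\mathcal Z)''$ by $\|\cdot\|_2$-continuity using Lemma~\ref{BallComplete}, and then pull back a unital copy of $M_n$. The only cosmetic differences are that the paper performs the passage from $A'$ to $S'$ at the level of $A^\omega$ (via Kirchberg's $\epsilon$-test) rather than by first lifting $S$ to a separable $B\subseteq A_\omega$, and it phrases the extension step abstractly (``extends by continuity'') rather than working explicitly with matrix units; your final deduction of \eqref{gamma.def} via the unique trace on $M_n$ is likewise the paper's argument. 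One small point to tighten: to invoke Lemma~\ref{lem:ZeroInAomega:easy} for unitality of $\bar\phi$ you need $\phi(1_{\mathcal Z})$ to commute with $A$, so you should include $A$ in your separable $B$ (harmless, since $A$ is separable).
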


\begin{proof}
By \cite[Proposition 4.4 (4)]{Kir06}, there is a unital embedding $\mathcal Z\rightarrow (A_\omega \cap A')/A^\perp$.\footnote{Here $A^\perp$ is the annihilator $\{x\in A_\omega:xa=ax=0,\ a\in A\}$.} Since $A^\perp \subseteq J_{A}$ (by Lemma \ref{lem:ZeroInAomega:easy}), this induces a unital $^*$-homomorphism $\mathcal Z \to A^\omega \cap A'$. 
By a standard use of Kirchberg's $\epsilon$-test (\cite[Lemma A.1]{Kir06}), there is a unital $^*$-homomorphism $\phi:\mathcal Z \to A^\omega \cap S'$.
As the unit ball of $A^\omega$ is $\|\cdot\|_{2,T_\omega(A)}$-complete (Lemma \ref{BallComplete}), so too is the unit ball of $A^\omega \cap S'$. Since $\phi$ is $(\|\cdot\|_{2,\tau_{\mathcal{Z}}}$-$\|\cdot\|_{2,T_\omega(A)})$-contractive, using Kaplansky's density theorem, it follows that $\phi$ extends by continuity to a map from the II$_1$ factor $\pi_{\tau_{\mathcal Z}}(\mathcal Z)''$.\footnote{This is isomorphic to the hyperfinite II$_1$ factor $\mathcal R$, but this is not needed.} Composing with a unital embedding $M_n \to \mathcal \pi_{\tau_\mathcal{Z}}(\mathcal Z)''$, we obtain a unital $^*$-homomorphism $\psi:M_n \to A^\omega \cap S'$, showing the first conclusion.

To get uniform property $\Gamma$, take equivalent, orthogonal projections $q_1,\dots,q_n \in M_n$ which sum to $1_{M_n}$, and define $p_i := \psi(q_i)$. As $M_n$ has a unique trace, for any $\tau \in T(A^\omega)$ and any $a \in A$, $\tau(\psi(\cdot)a)$ must be a scalar multiple of the trace on $M_n$, so that
\begin{equation} \tau(p_ia) = \tau(\psi(q_i)a) = \tau(a)\tau_{M_n}(q_i) = \tfrac{1}{n}\tau(a).\qedhere \end{equation}
\end{proof}

We now turn to one of the key applications of uniform property $\Gamma$.

\begin{lemma}[Tracial projectionisation]
\label{lem:Projectionization}
Let $A$ be a separable $\mathrm{C}^*$-algebra with $T(A)$ non-empty and compact, and with uniform property $\Gamma$.
Let $S,T \subseteq A^\omega$ be $\|\cdot\|_{2,T_\omega(A)}$-separable subsets.
Let $b \in (A^\omega \cap S')_+^1$.
Then there is a projection $p \in A^\omega \cap S'$ such that
\begin{equation}\label{eq:ProjectionizationTrace} \tau(ap) = \tau(ab) \quad a \in T,\ \tau \in T_\omega(A). \end{equation}
\end{lemma}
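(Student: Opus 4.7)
My plan is to use uniform property $\Gamma$ combined with continuous functional calculus on $b$ to construct, for each $n\geq 1$, an element $q_n\in A^\omega\cap S'$ that satisfies the trace identity $\tau(aq_n)=\tau(ab)$ exactly and is $O(n^{-1/2})$-close to a projection in $\|\cdot\|_{2,T_\omega(A)}$; Kirchberg's $\epsilon$-test will then promote this sequence to a genuine projection in $A^\omega\cap S'$.

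Enlarge $S\cup T\cup\{b\}$ to a $\|\cdot\|_{2,T_\omega(A)}$-separable subset $S_0\subseteq A^\omega$ containing $\mathrm{C}^*(b)$ and the products $T\cdot \mathrm{C}^*(b)$. Apply Lemma~\ref{lem:GammaS} to obtain orthogonal projections $r_1,\dots,r_n\in A^\omega\cap S_0'$ summing to $1_{A^\omega}$ with $\tau(cr_j)=\tfrac1n\tau(c)$ for $c\in S_0$ and $\tau\in T_\omega(A)$. Define continuous ramp functions $h_{n,j}:[0,1]\to[0,1]$ by $h_{n,j}(t):=\max\bigl(0,\min(1,\,nt-j+1)\bigr)$; these satisfy the identity $\sum_{j=1}^n h_{n,j}(t)=nt$ on $[0,1]$. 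Set
\[
q_n \;:=\; \sum_{j=1}^n h_{n,j}(b)\,r_j \ \in\ A^\omega\cap S'.
\]

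The $r_j$ are pairwise orthogonal and commute with each $h_{n,j}(b)$, so for $a\in T$ and $\tau\in T_\omega(A)$ the trace identity is exact:
\[
\tau(aq_n)\;=\;\sum_j\tfrac1n\tau\bigl(ah_{n,j}(b)\bigr)\;=\;\tau\Bigl(a\cdot\tfrac1n\textstyle\sum_j h_{n,j}(b)\Bigr)\;=\;\tau(ab),
\]
where $ah_{n,j}(b)\in S_0$ is used to invoke the trace property for the $r_j$. Orthogonality moreover gives $q_n^2-q_n=\sum_j h_{n,j}(b)\bigl(h_{n,j}(b)-1\bigr)r_j$, and since $h_{n,j}(h_{n,j}-1)$ is supported on $[(j-1)/n,j/n]$ with $|\cdot|\leq 1/4$, a spectral-measure computation together with the trace identity for $r_j$ yields $\|q_n^2-q_n\|_{2,T_\omega(A)}^2\leq 1/(16n)$.

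The conditions characterising the desired $p$---unit contraction, self-adjoint, projection, commutation with a fixed countable $\|\cdot\|_{2,T_\omega(A)}$-dense sequence in $S$, and the exact trace identity against a fixed countable $\|\cdot\|_{2,T_\omega(A)}$-dense sequence in $T$---can all be encoded as a countable family of defect functions on sequences in $A^1$. Each finite subfamily is simultaneously made arbitrarily small by the construction above for $n$ sufficiently large, so Kirchberg's $\epsilon$-test produces a representative sequence in $\ell^\infty(A)$ defining an exact projection $p\in A^\omega\cap S'$ with $\tau(ap)=\tau(ab)$ for $a\in T$ and $\tau\in T_\omega(A)$. The main technical point, and the step I expect to require the most care, is the formulation of the trace-identity condition in a form compatible with the $\epsilon$-test: it must be expressed as the vanishing along $\omega$ of the single scalar $\sup_{\tau\in T(A)}|\tau_m(a_{k,m}(x_m-b_m))|$ rather than one condition per limit trace, and one has to verify---using compactness of $T(A)$ to pass from pointwise vanishing across limit traces to this uniform sup-bound---that the representative sequences for the $q_n$ actually deliver this along $\omega$.
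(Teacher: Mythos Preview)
Your proof is correct and follows essentially the same approach as the paper: the ramp functions $h_{n,j}$ coincide with the paper's $f_i$, the element $q_n=\sum_j h_{n,j}(b)r_j$ is exactly the paper's $p=\sum_i p_i f_i(b)$, and both arguments conclude via Kirchberg's $\epsilon$-test after bounding $\|q_n-q_n^2\|_{2,T_\omega(A)}$. Your discussion of encoding the trace identity for the $\epsilon$-test is a point the paper leaves implicit, and your constant $1/(16n)$ is a harmless sharpening of the paper's $1/n$.
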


\begin{proof}
Fix $n\in\mathbb N$.  By Kirchberg's $\epsilon$-test (\cite[Lemma A.1]{Kir06}), it suffices to produce $p \in (A^\omega \cap S')_+^1$ satisfying \eqref{eq:ProjectionizationTrace} and $\|p-p^2\|_{2,T_\omega(A)}^2 \leq 1/n$.

Define functions $f_1,\dots,f_n \in C([0,1])$ by requiring that
$f_i|_{[0,\frac{i-1}n]} \equiv 0$, $f_i|_{[\frac in,1]} \equiv 1$, and $f_i$ is linear on $[\frac{i-1}n,\frac in]$. Note that
\begin{gather}
\label{eq:Projectionization2}
\mathrm{id}_{[0,1]} = \frac1n \ssum_{i=1}^n f_i \quad \text{and} \\
\label{eq:Projectionization3}
0\leq \ssum_{i=1}^n f_i-f_i^2 \leq 1_{C([0,1])}.
\end{gather}

Lemma \ref{lem:GammaS} provides pairwise orthogonal projections $p_1,\dots,p_n \in A^\omega \cap S' \cap \{b\}'$ satisfying
\begin{equation} \label{eq:Projectionization1} \tau(p_ix) = \tfrac1n \tau(x), \quad \tau \in T_\omega(A),\ x \in \mathrm{C}^*(T \cup \{b\}). \end{equation}
Define
$p:=\sum_{i=1}^n p_if_i(b) \in A^\omega \cap S'$.
Then for $a \in T$ and $\tau\in T_\omega(A)$,
\begin{eqnarray}
\tau(pa) = \ssum_{i=1}^n \tau(p_if_i(b)a) 
\stackrel{\eqref{eq:Projectionization1}}= \ssum_{i=1}^n \frac1n \tau(f_i(b)a)
\stackrel{\eqref{eq:Projectionization2}}= \tau(ba).
\end{eqnarray}
Moreover, for $\tau \in T_\omega(A)$, using properties of the $p_i$ in the first line,
\begin{eqnarray}
\notag
\tau(p-p^2)
&=& \tau\big(\ssum_{i=1}^n p_i(f_i(b)-f_i(b)^2)\big) \\
&\stackrel{\eqref{eq:Projectionization1}}=& \ssum_{i=1}^n \frac1n \tau(f_i(b)-f_i(b)^2) \stackrel{\eqref{eq:Projectionization2}}\leq \tfrac1n \tau(1_{A^\omega}) = \tfrac1n.
\end{eqnarray}
Since $p-p^2$ is a positive contraction, we get
\begin{equation} \|p-p^2\|_{2,T_\omega(A)}^2=\sup_{\tau\in T_\omega(A)}\tau((p-p^2)^2) \leq \sup_{\tau \in T_\omega(A)} \tau(p-p^2) \leq \tfrac1n, \end{equation} as required.
\end{proof}
\section{Complemented partitions of unity}\label{S3}

\noindent
In this section we perform the main technical argument of the paper, obtaining complemented partitions of unity for nuclear $\mathrm{C}^*$-algebras with uniform property $\Gamma$.  Recall our standard convention from \eqref{def:uniformtracialultrapower} that $A^\omega$ denotes the uniform tracial ultrapower, which, by Proposition \ref{lem:ZeroInAomega}, is unital when $T(A)$ is non-empty and compact, and $A^\omega\cap A'$ consists of the $A$-central elements of $A^\omega$ (see footnote \ref{NormFoot}).

\begin{definition}\label{defn:CPOU}
	Let $A$ be a separable $\mathrm{C}^*$-algebra with $T(A)$ non-empty and compact.
	We say that $A$ has \emph{complemented partitions of unity} (CPoU) if for every $\|\cdot\|_{2,T_\omega(A)}$-separable subset $S$ of $A^\omega$, every family $a_1,\dots,a_k \in (A^\omega)_+$, and any scalar
	\begin{equation}
	\label{eq:CPoUTraceIneq1}
	\delta>\sup_{\tau\in T_\omega(A)}\min\{\tau(a_1),\dots,\tau(a_k)\},
	\end{equation}
	there exist orthogonal projections $p_1,\dots,p_k\in A^\omega\cap S'$ such that
	\begin{equation}
	p_1+\cdots+p_k = 1_{A^\omega} \ \text{and}\ 
	\tau(a_ip_i)\leq \delta\tau(p_i),\ \tau\in T_\omega(A),\ i=1,\dots,k.
	\end{equation}
\end{definition}

Standard methods yield the following characterisation of CPoU with approximations in place of the use of the uniform tracial ultrapower; this characterisation matches the informal version of CPoU given in Definition \ref{intro:defCPoU}.
Note that central surjectivity (Lemma \ref{lem:CentralSurjection}) shows that condition \eqref{prop:CPoUReformulation.E2} may be interchanged with $\|[e_i,x]\|_{2,T(A)} < \epsilon$.

\begin{proposition}
\label{prop:CPoUReformulation}
Let $A$ be a separable $\mathrm{C}^*$-algebra with $T(A)$ non-empty and compact.
Then $A$ has CPoU if and only if for every finite set $\mathcal F \subset A$, every $\epsilon>0$, every family $a_1,\dots,a_k\in A_+$, and any scalar
\begin{equation}\label{prop:CPoUReformulation.E1} \delta > \sup_{\tau \in T(A)} \min\{\tau(a_1),\dots,\tau(a_k)\}, \end{equation}
there exist pairwise orthogonal $e_1,\dots,e_k \in A_+^1$  such that
\begin{align}
\label{prop:CPoUReformulation.E2}
	\|[e_i,x]\| &< \epsilon, \quad &&x \in \mathcal F,\ i=1,\dots,k, \\
\notag
	\tau(e_1+\cdots+e_k) &> 1-\epsilon, \quad && \tau \in T(A), \text{ and} \\
	\tau(a_ie_i) &< \delta\tau(e_i)+\epsilon,\quad && \tau\in T(A),\ i=1,\dots,k.
\end{align}
\end{proposition}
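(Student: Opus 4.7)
The reformulation is a standard ultrapower-reindexing result; my plan is to prove the two implications separately, using Kirchberg's $\epsilon$-test to go from finite approximate conditions in $A$ to exact conditions in $A^\omega$, and using the central surjectivity lemma in the other direction.

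For the direction from the approximation version to CPoU, fix a $\|\cdot\|_{2,T_\omega(A)}$-separable $S\subseteq A^\omega$, positive $a_i\in (A^\omega)_+$, and $\delta>\sup_{T_\omega(A)}\min_i\tau(a_i)$, with lifts $(a_{i,n})$ in $A_+$ and a $\|\cdot\|_{2,T_\omega(A)}$-dense sequence $(s_j)\subseteq S$ with lifts $(s_{j,n})\subseteq A$. First I would verify that $\{n:\delta>\sup_{\tau\in T(A)}\min_i\tau(a_{i,n})\}\in\omega$: otherwise, using compactness of $T(A)$ to attain the max, one picks $\tau_n\in T(A)$ with $\min_i\tau_n(a_{i,n})\geq\delta$ on a set in $\omega$, and the resulting limit trace in $T_\omega(A)$ would violate the hypothesis on $\delta$. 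I would then apply Kirchberg's $\epsilon$-test on $X=(A_+^1)^k$ with test functions (depending on $n$) encoding exact pairwise orthogonality $\max_{i\neq j}\|x_ix_j\|$, commutation $\max_i\|[x_i,s_{j,n}]\|$ for each $j$, the sum deficit $\sup_{\tau\in T(A)}(1-\tau(\sum_ix_i))$, and the trace excess $\sup_{\tau\in T(A)}(\tau(a_{i,n}x_i)-\delta\tau(x_i))^+$ for each $i$. For any finite selection of tests and tolerance, the approximation hypothesis supplies a witness for $\omega$-many $n$, so Kirchberg's $\epsilon$-test yields a single sequence $(e_{i,n})$ with all tests vanishing along $\omega$. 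Setting $b_i:=(e_{i,n})\in A^\omega$, exact orthogonality persists; the sum condition combined with $\sum_ib_i\leq 1_{A^\omega}$ (orthogonal positive contractions) forces $\sum_ib_i=1_{A^\omega}$, whence $b_i=b_i\sum_jb_j=b_i^2$ makes each $b_i$ a projection; commutation extends from the dense $(s_j)$ to all of $S$ by $\|\cdot\|_{2,T_\omega(A)}$-continuity of multiplication by bounded elements, and the trace inequalities transfer directly.

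For the converse, I would apply CPoU with $S:=\mathcal F$ and the $a_i$ viewed as constants (noting $\sup_{T_\omega(A)}\min_i\tau(a_i)=\sup_{T(A)}\min_i\tau(a_i)$ here), obtaining orthogonal projections $p_i\in A^\omega\cap \mathcal F'$ with the required trace inequalities. Lemma~\ref{lem:CentralSurjection}(i) lifts $p_1$ to a positive contraction $\tilde p_1\in A_\omega\cap \mathcal F'$ exactly norm-commuting with $\mathcal F$; assuming pairwise orthogonal positive contraction lifts $\tilde p_1,\ldots,\tilde p_{i-1}$ have been built, the element $h:=1_{A_\omega}-\sum_{j<i}\tilde p_j\in(A_\omega\cap \mathcal F')_+^1$ has image the projection $1_{A^\omega}-\sum_{j<i}p_j$ absorbing $p_i$, so Lemma~\ref{lem:CentralSurjection}(ii) produces a positive contraction lift $\tilde p_i$ with $h\tilde p_i=\tilde p_i$, exactly orthogonal in $A_\omega$ to all earlier $\tilde p_j$. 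Passing to positive contraction representatives $(e_{i,n})\in A_+^1$, all the approximate conditions (commutation with $\mathcal F$ in operator norm, sum within $\|\cdot\|_{2,T(A)}$ of an approximate unit, trace inequalities) hold along $\omega$, while $\|e_{i,n}e_{j,n}\|\to 0$ for $i\neq j$; a standard perturbation of approximately orthogonal positive contractions then modifies the $e_{i,n}$ to exactly orthogonal elements at negligible operator-norm cost, and selecting $n$ along $\omega$ with all errors below $\epsilon$ provides the desired $e_i$. The footnote on interchanging $\|\cdot\|$- and $\|\cdot\|_{2,T(A)}$-commutation follows by running the argument with the weaker commutation tests and invoking Lemma~\ref{lem:CentralSurjection}(i) once more to upgrade back to the norm version.

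The main obstacle is the exact-orthogonality requirement in the $(\Rightarrow)$ direction: central surjectivity supplies orthogonality only at the level of $A_\omega$ (equivalently, asymptotically along $\omega$), and an additional perturbation step in $A$ is needed to produce literally orthogonal contractions for a fixed index $n$ without disturbing the commutation, sum, or trace conditions. The $(\Leftarrow)$ direction is essentially routine once the test functions have been set up to encode all four bundles of conditions simultaneously.
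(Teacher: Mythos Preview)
Your approach is correct and is precisely the kind of ``standard methods'' argument the paper has in mind (the paper omits the proof entirely, only flagging that central surjectivity is what underlies the interchangeability of norm- and $2$-norm commutation). Two small technical points are worth tightening.

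First, in the $(\Rightarrow)$ direction you write $h:=1_{A_\omega}-\sum_{j<i}\tilde p_j$, but $A$ is not assumed unital, so $A_\omega$ need not have a unit. The fix is routine: begin by lifting $1_{A^\omega}$ (which exists by Proposition~\ref{lem:ZeroInAomega}) to some $e\in (A_\omega\cap\mathcal F')_+^1$ via Lemma~\ref{lem:CentralSurjection}(i), apply Lemma~\ref{lem:CentralSurjection}(ii) with $h=e$ to obtain $\tilde p_1$ with $e\tilde p_1=\tilde p_1$, and then take $h_i:=e-\sum_{j<i}\tilde p_j$ at each stage. Enlarging $S$ at each step to include the previously constructed $\tilde p_j$ and $e$ ensures commutativity, from which one checks that $h_i\geq 0$ and that $h_i\tilde p_i=\tilde p_i$ together with $\tilde p_i\geq 0$ and $0\leq\tilde p_j\leq\sum_{j'<i}\tilde p_{j'}$ forces $\tilde p_j\tilde p_i=0$ for each $j<i$ individually.

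Second, Lemma~\ref{lem:CentralSurjection}(ii) as stated produces a lift $x$ with $hx=x$, but not a priori a positive contraction. Once you have arranged that the lift commutes with $h$ (by including $h$ in $S$), self-adjointness of $h$ lets you pass to the real part and then apply functional calculus to land in $[0,1]$; both operations preserve the relation $hx=x$ in the commutative subalgebra $\mathrm{C}^*(h,x)$, and the image in $A^\omega$ is unchanged since $p_i$ is already a positive contraction.

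With these adjustments your inductive lifting goes through, and the final perturbation to exact orthogonality is justified by projectivity of $\bigoplus_{i=1}^k C_0((0,1])$ (Loring), exactly as you indicate. The $(\Leftarrow)$ direction via the $\epsilon$-test is clean as written.
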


\begin{remark}\label{prop:CPoUReformulation:rem} In particular, it suffices to verify CPoU with the positive elements $a_1,\dots,a_k$ taken in $A$ rather than in $A^\omega$ (and in this situation \eqref{prop:CPoUReformulation.E1} is equivalent to (\ref{eq:CPoUTraceIneq1})).
\end{remark}

For later use, we note the following stability result regarding CPoU.

\begin{lemma}\label{CPOU:Matrix}
Let $A$ be a separable $\mathrm{C}^*$-algebra with $T(A)$ non-empty and compact. For $m\in\mathbb N$, $A$ has CPoU if and only if $M_m(A)$ has CPoU.
\end{lemma}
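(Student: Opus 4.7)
The plan rests on two routine identifications that preserve all the relevant structure: $M_m(A)^\omega \cong M_m(A^\omega)$ as $\mathrm{C}^*$-algebras, and $T(M_m(A)) \cong T(A)$ via the bijection $\tau \leftrightarrow \tau \otimes \mathrm{tr}_m$ (where $\mathrm{tr}_m$ denotes the normalised trace on $M_m$), which in turn identifies $T_\omega(M_m(A))$ with $T_\omega(A)$ and yields the seminorm identity $\|b\|_{2,T_\omega(M_m(A))}^2 = \tfrac{1}{m}\sum_{j,k} \|b_{jk}\|_{2,T_\omega(A)}^2$ for $b=(b_{jk}) \in M_m(A^\omega)$. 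I will transfer CPoU data back and forth through the unital diagonal embedding $\iota_m\colon A^\omega \hookrightarrow M_m(A^\omega)$, $a \mapsto a \otimes I_m$, together with its partial trace $E := \id_{A^\omega} \otimes \mathrm{tr}_m \colon M_m(A^\omega) \to A^\omega$. The key properties are: $\iota_m$ is isometric for the relevant $\|\cdot\|_2$-seminorms, one has the trace compatibility $(\tau \otimes \mathrm{tr}_m)(b) = \tau(E(b))$ for $b \in M_m(A^\omega)$ and $\tau \in T_\omega(A)$, and the commutant relation $(1_{A^\omega} \otimes M_m)' \cap M_m(A^\omega) = \iota_m(A^\omega)$ holds.

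For the direction ($A$ has CPoU $\Rightarrow M_m(A)$ has CPoU), given a $\|\cdot\|_{2,T_\omega(M_m(A))}$-separable subset $S \subset M_m(A^\omega)$, positive elements $b_1,\dots,b_k \in M_m(A^\omega)_+$, and $\delta > \sup_{\sigma \in T_\omega(M_m(A))} \min_i \sigma(b_i)$, I would set $a_i := E(b_i)$, so that $\delta > \sup_{\tau \in T_\omega(A)} \min_i \tau(a_i)$. Taking $\tilde S \subset A^\omega$ to be the set of all matrix entries of elements of $S \cup \{b_1,\dots,b_k\}$ (separable in $\|\cdot\|_{2,T_\omega(A)}$ by the seminorm identity above), CPoU for $A$ produces orthogonal projections $p_i \in A^\omega \cap \tilde S'$ summing to $1_{A^\omega}$ with $\tau(a_i p_i) \leq \delta \tau(p_i)$. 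Then $P_i := \iota_m(p_i)$ are orthogonal projections in $M_m(A^\omega)$ summing to $1_{M_m(A^\omega)}$; they commute with every element of $S$ because $p_i$ commutes with every matrix entry; and a direct computation gives $E(b_i P_i) = a_i p_i$, whence $(\tau \otimes \mathrm{tr}_m)(b_i P_i) = \tau(a_i p_i) \leq \delta \tau(p_i) = \delta (\tau \otimes \mathrm{tr}_m)(P_i)$.

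For the converse ($M_m(A)$ has CPoU $\Rightarrow A$ has CPoU), given $\|\cdot\|_{2,T_\omega(A)}$-separable $S \subset A^\omega$, positive $a_1,\dots,a_k \in A^\omega_+$, and $\delta > \sup_\tau \min_i \tau(a_i)$, I would set $b_i := \iota_m(a_i) \in M_m(A^\omega)_+$ and take $\tilde S := \iota_m(S) \cup \{1_{A^\omega} \otimes e_{jk} : 1 \leq j,k \leq m\}$, a separable subset of $M_m(A^\omega)$ (using isometry of $\iota_m$ plus the finite matrix-unit set). Applying CPoU in $M_m(A)$ yields orthogonal projections $P_i \in M_m(A^\omega) \cap \tilde S'$ summing to $1_{M_m(A^\omega)}$ with $(\tau \otimes \mathrm{tr}_m)(b_i P_i) \leq \delta(\tau \otimes \mathrm{tr}_m)(P_i)$. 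Commutation with all matrix units forces $P_i = \iota_m(p_i)$ for uniquely determined $p_i \in A^\omega$ via the commutant relation above; commutation with $\iota_m(S)$ then gives $p_i \in A^\omega \cap S'$; and orthogonality, the sum condition $\sum_i p_i = 1_{A^\omega}$, the projection property, and the trace inequality $\tau(a_i p_i) \leq \delta \tau(p_i)$ all transfer verbatim through $\iota_m$ and $E$. The whole argument is bookkeeping through these identifications; the only mildly technical input is checking the ultrapower identification $M_m(A)^\omega \cong M_m(A^\omega)$ (via the seminorm identity, which also reveals that $J_{M_m(A)} = M_m(J_A)$), and there is no substantive obstacle.
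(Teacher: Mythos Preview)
Your argument is correct and essentially identical to the paper's: both directions transfer CPoU data through the diagonal embedding $a\mapsto a\otimes 1_{M_m}$, using that the partial trace $E$ intertwines the trace pairings and that commutation with all matrix units forces an element of $M_m(A^\omega)$ to lie in the diagonal copy of $A^\omega$. One small imprecision: the displayed seminorm ``identity'' $\|b\|_{2,T_\omega(M_m(A))}^2 = \tfrac{1}{m}\sum_{j,k}\|b_{jk}\|_{2,T_\omega(A)}^2$ holds only trace-by-trace (the supremum does not commute with the sum), but the resulting two-sided inequality between these seminorms is all you actually use, so the argument is unaffected.
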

\begin{proof}
Fix $m\in\mathbb N$ and note that $M_m(A)^\omega\cong M_m(A^\omega)\cong A^\omega\otimes M_m$ with a corresponding identification between the amplifications of limit traces in $T_\omega(A)$ to $M_m(A^\omega)$ and the limit traces $T_\omega(M_m(A))$.  

Suppose that $A$ has CPoU. 
Let $a^{(1)},\dots,a^{(k)}\in (A^\omega\otimes M_m)_+$ and $\delta>0$ be such that for all $\tau\in T_\omega(M_m(A))$, there exists $i\in\{1,\dots,k\}$ such that $\tau(a^{(i)})<\delta$.  
Write $a^{(i)}=\sum_{r,s=1}^ma_{r,s}^{(i)}\otimes e_{r,s}$ for $a_{r,s}^{(i)}\in A^\omega$ so that for every $\tau\in T_\omega(A)$, there exists $i\in\{1,\dots,k\}$ with $\tau(\sum_{r=1}^ma^{(i)}_{r,r})<m\delta$.  Given any separable subset $S$ of $M_m(A^\omega)$, let $\tilde{S}\subset A^\omega$ consist of the matrix entries of elements 
of $S$. By CPoU for $A$, there exists a partition of unity consisting of projections $p_1,\dots,p_k\in A^\omega\cap \tilde{S}'$ such that \begin{equation}
\tau\big(\ssum_{r=1}^ma_{r,r}^{(i)}p_i\big)\leq m\delta\tau(p_i),\quad \tau\in T_\omega(A),\ i=1,\dots,k.
\end{equation}
Then $p_1\otimes 1_{M_m},\dots,p_k\otimes 1_{M_m}$ are projections in $(A^\omega\otimes M_m)\cap S'$ with \begin{equation}
\tau(a^{(i)}(p_i\otimes 1_{M_m}))\leq\delta\tau(p_i\otimes 1_{M_m}),\quad \tau\in T_\omega(A\otimes M_m),
\end{equation}
for each $i$.  Therefore $M_m(A^\omega)\cong M_m(A)^\omega$ has CPoU.

Conversely suppose $M_m(A)^\omega\cong A^\omega\otimes M_m$ has CPoU. 
Let $a_1,\dots,a_k\in A^\omega$ be positive elements and $\delta>0$ such that \eqref{eq:CPoUTraceIneq1} holds. Then
\begin{equation}
\sup_{\tau\in T_\omega(M_m(A))}\min\{\tau(a_1\otimes 1_{M_m}),\dots,\tau(a_k\otimes 1_{M_m})\}<\delta.
\end{equation}
Given a separable subset $S\subset A^\omega$ containing $1_{A^\omega}$ we can apply CPoU for $M_m(A)$ to find a partition of unity consisting of projections $\tilde{p}_1,\dots,\tilde{p}_k$ in $(A^\omega\otimes M_m)\cap (S\otimes M_m)'$ with
\begin{equation}
\tau((a_i\otimes 1_{M_m})\tilde{p}_i)\leq \delta\tau(\tilde{p}_i),\quad\tau\in T_\omega(M_m(A)),\ i=1,\dots,k.
\end{equation}
Since each $\tilde{p}_i$ commutes with $1_{A^\omega}\otimes M_m$, it is of the form $p_i\otimes 1_{M_m}$ for some $p_i\in A^\omega\cap S'$.  The family $p_1,\dots,p_k$ witnesses CPoU for $A$.
\end{proof}

We now turn towards the proof of Theorem \ref{thm:mainCPoU}, beginning by recording the strong form of the completely positive approximation property we require. In the case when $\hat{S}=A$, and working with approximate statements rather than ultrapowers, the following is a consequence of \cite[Theorem 3.1]{BCW16} (which is stated in terms of finite sets and $\epsilon$'s, and obtains a stronger convexity conclusion in (ii), which we do not need).   The version stated here is obtained from \cite[Theorem 3.1]{BCW16} via standard reindexing or applying Kirchberg's $\epsilon$-test.  We omit the details.

\begin{lemma}[{\cite[Theorem 3.1]{BCW16}}]
	\label{lem:BCW}
	Let $A$ be a separable, nuclear $\mathrm{C}^*$-algebra, let $\hat{S}$ be a $\|\cdot\|$-separable subalgebra of $A_\omega$, and let $S \subset A^\omega$ be the $\|\cdot\|_{2,T_\omega(A)}$-closure of the image of $\hat{S}$ in $A^\omega$.
	Then there exists a sequence $(F_m, \psi_m, \phi_m )_{m=1}^\infty$, where each $F_m$ is a finite dimensional $\mathrm{C}^*$-algebra, and $\psi_m:A\rightarrow F_m$ and $\phi_m:F_m\rightarrow A$ are c.p.c.\ maps which induce c.p.c.\ maps $\psi:A_\omega\rightarrow\prod_\omega F_m$ and $\phi:\prod_\omega F_m\rightarrow A^\omega$ such that\footnote{Here $\prod_\omega F_m$ is the norm ultraproduct of the sequence $(F_m)_{m=1}^\infty$, defined analogously to the ultrapower.}
	\begin{enumerate}
		\item[\rm{(i)}] the restriction of $\phi\circ\psi$ to $\hat{S}$ agrees with the (restricted) quotient map $\hat{S}\to S \subset A^\omega$,
		\item[\rm{(ii)}] the $\phi_m$ are sums of c.p.c.\ order zero maps,
		\item[\rm{(iii)}] the map $\psi$ restricts to an order zero map on $\hat{S}$.
	\end{enumerate}
\end{lemma}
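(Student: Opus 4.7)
The plan is to derive this ultrapower statement from the finitary version \cite[Theorem 3.1]{BCW16} by a reindexing argument via Kirchberg's $\epsilon$-test (\cite[Lemma A.1]{Kir06}), following the standard pattern used throughout this paper.

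First I would enumerate a countable $\|\cdot\|$-dense sequence $(s^{(j)})_{j=1}^\infty$ in the unit ball of $\hat{S}\subset A_\omega$, together with bounded representative sequences $s^{(j)}=(s^{(j)}_n)_{n\in\mathbb N}$ in $A$. The finitary BCW theorem provides, for every finite $\mathcal F\subset A$ and $\epsilon>0$, a triple $(F,\psi,\phi)$ where $F$ is finite dimensional, $\psi:A\to F$ is c.p.c., $\phi:F\to A$ is a sum of c.p.c.\ order zero maps, $\|\phi\psi(x)-x\|<\epsilon$ on $\mathcal F$, and $\psi$ is order zero on $\mathcal F$ up to $\epsilon$ in the sense that $\|\psi(x)\psi(y)\|<\epsilon$ whenever $x,y\in\mathcal F$ satisfy $xy=0$. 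Applying this with $\mathcal F_m=\{s^{(j)}_m:j\leq m\}$ (augmented to close under products at level $m$) and tolerance $1/m$ yields, for each $m$, an admissible class of candidate triples $(F_m,\psi_m,\phi_m)$.

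I would then set up Kirchberg's $\epsilon$-test with test functions on such triples measuring, for each $j,k\in\mathbb N$: the identity-approximation defect $\|\phi_m\psi_m(s^{(j)}_m)-s^{(j)}_m\|$, and the order-zero defect $\|\psi_m(s^{(j)}_m)\psi_m(s^{(k)}_m)\|$ encoded so that it is required to vanish precisely when $\|s^{(j)}_m s^{(k)}_m\|$ is small at level $m$. The latter can be arranged via a functional calculus cut-off replacing $s^{(j)}_m$ by an element whose product with $s^{(k)}_m$ is exactly zero, at a cost controlled by $\|s^{(j)}_m s^{(k)}_m\|$; BCW applied with these perturbed elements in $\mathcal F_m$ then yields the desired order-zero behaviour. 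Since each individual test function can be driven to zero along $\omega$ by the BCW candidates, the $\epsilon$-test produces a single sequence $(F_m,\psi_m,\phi_m)$ making them all vanish simultaneously.

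With this sequence in hand, (ii) is built in. For (i), the identity-approximation condition gives $\phi\psi(s^{(j)})=\text{(image of }s^{(j)}\text{)}$ in $A^\omega$ for each $j$; since $\phi\psi$ is norm-continuous on bounded sets and agrees with the quotient $\hat S\to A^\omega$ on the dense set $\{s^{(j)}\}$, the agreement extends to all of $\hat S$. For (iii), given $s,t\in\hat S$ with $st=0$, approximate $s,t$ in norm by $s^{(j)},s^{(k)}$; then $\lim_{n\to\omega}\|s^{(j)}_n s^{(k)}_n\|$ can be made arbitrarily small, so the order-zero test condition forces $\psi(s^{(j)})\psi(s^{(k)})$ to be arbitrarily small, and norm continuity of multiplication on bounded sets gives $\psi(s)\psi(t)=0$. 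The only genuinely delicate point is bridging between the finitary BCW condition (approximate order zero only on pairs with \emph{exactly} zero product at level $m$) and the ultrapower condition (order zero on all pairs orthogonal in the $A_\omega$ limit); the functional calculus perturbation described above handles this, and everything else is routine bookkeeping, which is presumably why the authors omit the details.
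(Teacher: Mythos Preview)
Your approach matches the paper's (omitted) argument exactly: the paper simply states that the lemma follows from \cite[Theorem 3.1]{BCW16} via standard reindexing or Kirchberg's $\epsilon$-test, and you have correctly identified the one nonroutine point, namely bridging the finitary approximate-order-zero condition (on pairs with \emph{exactly} zero product in $A$) to the ultrapower order-zero condition (on pairs orthogonal only in the limit).

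One small correction to your handling of that point: you cannot in general replace only $s^{(j)}_m$ by an element exactly orthogonal to $s^{(k)}_m$ at cost controlled by $\|s^{(j)}_m s^{(k)}_m\|$ --- if $s^{(k)}_m$ happens to be invertible the only such element is $0$. The fix is either to perturb \emph{both} elements simultaneously (for positive contractions $a,b$ the pair $((a-b)_+,(a-b)_-)$ is exactly orthogonal and lies within $\|ab\|^{1/2}$ of $(a,b)$, by operator-H\"older continuity of the square root), or, more cleanly, to test order zero only on pairs of the form $((s^{(j)}_m)_+,(s^{(j)}_m)_-)$ for $s^{(j)}$ ranging over a countable dense subset of the self-adjoint part of $\hat S$: such pairs are exactly orthogonal at every level $m$, so the finitary BCW theorem applies to them directly with no perturbation needed, and they suffice to witness order zero on all of $\hat S$ since every orthogonal positive pair $(a,b)$ in $\hat S$ arises as $((a-b)_+,(a-b)_-)$.
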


In our use of these approximations  to perform the first step in the proof of Theorem \ref{thm:mainCPoU} (as outlined in the introduction), the key role of (ii) and (iii) is that the order zero maps preserves traciality (\cite[Corollary 4.4]{WZ09}). Consequently, $\tau\circ\phi_m$ is a tracial functional on $F_m$ if $\tau \in T(A)$, and $\tau \circ \psi$ is a tracial functional on $\hat{S}$ if $\tau \in T(\prod_\omega F_m$).  

\begin{lemma}\label{s-lemma}
	Let $A$ be a separable, nuclear $\mathrm{C}^*$-algebra with $T(A)$ non-empty and compact.
	Let $S \subset A^\omega$ be a $\|\cdot\|_{2,T_\omega(A)}$-separable subset, and $q \in A^\omega \cap A'$ a projection such that there exists $\mu\in (0,1]$ with $\tau(q)=\mu$ for all $\tau\in T_\omega(A)$. Let $a_1,\dots,a_k \in A_+$ and $\delta>0$ be such that
	\begin{align}
	\delta>\sup_{\tau\in T(A)}\min\{\tau(a_1),\dots,\tau(a_k)\},\quad \tau \in T_\omega(A).
	\end{align}
	Then there exist $b_1, \dots, b_k \in (A^\omega \cap S')_+^1$ such that
	\begin{align}
	\tau\big(\ssum_{i=1}^{k} b_iq\big) &= \mu, \qquad &&\tau\in T_\omega(A), \text{ and}\label{eq:s-lemma1} \\
	\tau(a_i b_iq) &\leq \delta \tau(b_iq), \qquad &&\tau \in T_\omega (A),\ i=1,\dots,k.\label{eq:s-lemma2}
	\end{align}
\end{lemma}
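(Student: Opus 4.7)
The plan is to prove this lemma by transporting a finite-dimensional complemented partition of unity across the refined c.p.\ approximation of Lemma~\ref{lem:BCW}. The first step is to select a $\|\cdot\|$-separable $C^*$-subalgebra $\hat S \subset A_\omega$ which contains lifts of all elements of $S$, contains $a_1,\ldots,a_k \in A$ as constant sequences, and contains some positive contraction lift $\hat q \in A_\omega \cap A'$ of $q$ (which exists by central surjectivity, Lemma~\ref{lem:CentralSurjection}, applied to any positive contraction sequence representing $q$). Applying Lemma~\ref{lem:BCW} to $\hat S$ then yields finite-dimensional $F_m = \bigoplus_{\ell=1}^{L_m} M_{n_{m,\ell}}$ together with c.p.c.\ maps $\psi_m:A\to F_m$ and $\phi_m:F_m\to A$ whose induced ultrapower maps $\psi$, $\phi$ satisfy $\phi\psi|_{\hat S}$ being the quotient $\hat S \to S \subset A^\omega$, $\phi_m$ being a sum of c.p.c.\ order zero maps, and $\psi|_{\hat S}$ being order zero.

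The finite-dimensional combinatorial core is to build block-wise complemented central projections inside each $F_m$. Writing $p_{m,\ell}$ for the minimal central projection of the block $M_{n_{m,\ell}}$ and $\tau_{m,\ell}$ for its normalised extremal trace (extended by zero to a state on $F_m$), I would pick for each $(m,\ell)$ an index $i(m,\ell) \in \{1,\ldots,k\}$ minimising $\tau_{m,\ell}(\psi_m(a_i))$, and set
\[
   c_i^{(m)} := \sum_{\ell : i(m,\ell) = i} p_{m,\ell} \in Z(F_m).
\]
These are pairwise orthogonal central projections summing to $1_{F_m}$, and they satisfy the key identity $\tau_{m,\ell}(\psi_m(a_i) c_i^{(m)}) = \tau_{m,\ell}(c_i^{(m)}) \cdot \min_j \tau_{m,\ell}(\psi_m(a_j))$ on each block. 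The candidates are then $b_i := \phi(c_i) \in (A^\omega)_+^1$ with $c_i := (c_i^{(m)})_m \in \prod_\omega F_m$.

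The trace conditions are verified by exploiting that for every $\tau \in T_\omega(A)$ the functional $\tau \circ \phi$ is tracial on $\prod_\omega F_m$ (each order zero summand of $\phi$ preserves traciality by \cite[Corollary~4.4]{WZ09}), and its extremal components arise from ultrafilter limits of the $\tau_{m,\ell}$'s. Since $\psi|_{\hat S}$ is order zero, each $\tau_{m,\ell} \circ \psi_m$ is, asymptotically in $m$, a positive tracial functional on $\hat S$ of norm at most $1$; after normalising it becomes a tracial state on $A$, so the hypothesis transfers to $\min_i \tau_{m,\ell}(\psi_m(a_i)) \leq \delta$ in the limit. Combined with the block-wise identity and $\phi\psi(a_i) = a_i$, $\phi\psi(\hat q) = q$ in $A^\omega$, this delivers \eqref{eq:s-lemma2}. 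For \eqref{eq:s-lemma1}, the identity $\sum_i c_i^{(m)} = 1_{F_m}$ reduces matters to checking $\sum_i b_i = 1_{A^\omega}$, which follows from $\phi_m(1_{F_m}) \to 1_A$ in $\|\cdot\|_{2,T_\omega(A)}$ via the sandwich $\phi_m\psi_m(1_{\tilde A}) \leq \phi_m(1_{F_m}) \leq 1_{\tilde A}$ (working in the minimal unitisation if $A$ is non-unital).

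The main obstacle will be verifying $b_i \in A^\omega \cap S'$. Although $c_i$ is central in $\prod_\omega F_m$ and hence commutes with $\psi(\hat S)$, the map $\phi$ is only a \emph{sum} of c.p.c.\ order zero maps rather than a single order zero map, so the commutator $[\phi(c_i), \phi\psi(x)] = [\phi(c_i), x]$ in $A^\omega$ does not vanish for free. My plan is to exploit the structure theorem for order zero maps applied to each summand of $\phi$, together with the order zero structure of $\psi|_{\hat S}$ from Lemma~\ref{lem:BCW}(iii), to obtain approximate commutation of $\phi(c_i)$ with $S$ in $\|\cdot\|_{2,T_\omega(A)}$; then Kirchberg's $\epsilon$-test and central surjectivity (Lemma~\ref{lem:CentralSurjection}) would upgrade this to an honest element of $A^\omega \cap S'$ with the required trace values. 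An alternative (likely cleaner) route is to work from the start with approximately central representing sequences in $A$ and apply the $\epsilon$-test directly to produce $b_i$ satisfying all of \eqref{eq:s-lemma1}, \eqref{eq:s-lemma2} and $b_i \in A^\omega \cap S'$ simultaneously.
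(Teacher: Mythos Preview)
Your overall strategy---pushing central projections in $F_m$ across the refined approximation of Lemma~\ref{lem:BCW}---is exactly the paper's approach. But you have misidentified where the difficulty lies, and this causes a genuine gap.

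The commutation problem you flag as the ``main obstacle'' is in fact immediate. Since $\phi\circ\psi|_{\hat S}$ agrees with the quotient $^*$-homomorphism $\hat S\to A^\omega$, the Stinespring argument of \cite[Lemma~3.5]{KW04} shows that $\psi(\hat S)$ lies in the multiplicative domain of $\phi$. Hence for $x\in\hat S$ with image $\bar x\in A^\omega$ and any $y\in\prod_\omega F_m$ one has $\phi(\psi(x)y)=\bar x\,\phi(y)$ and $\phi(y\,\psi(x))=\phi(y)\,\bar x$. As your $c_i$ are central in $\prod_\omega F_m$, this gives $\bar x\,b_i=\phi(\psi(x)c_i)=\phi(c_i\psi(x))=b_i\,\bar x$ directly; no structure theorem for the individual order zero summands and no $\epsilon$-test is needed here.

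The real gap is in the block selection. You choose $i(m,\ell)$ to minimise $\tau_{m,\ell}(\psi_m(a_i))$, but the trace computation for \eqref{eq:s-lemma2} does not see this quantity. Using the multiplicative domain as above, one finds
\[
\tau(a_i b_i q)=\lim_{m\to\omega}\sum_{\ell:\,i(m,\ell)=i}\alpha_\ell^{(m,\tau_m)}\,\tau_{m,\ell}\big(\psi_m(a_i q_m)\big),
\qquad
\tau(b_i q)=\lim_{m\to\omega}\sum_{\ell:\,i(m,\ell)=i}\alpha_\ell^{(m,\tau_m)}\,\tau_{m,\ell}\big(\psi_m(q_m)\big),
\]
where $(q_m)$ lifts $\hat q$. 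Since $\psi$ is only order zero on $\hat S$, there is no relation between $\psi_m(a_i q_m)$ (or $\psi_m(a_i)\psi_m(q_m)$) and $\psi_m(a_i)$ that lets you pass from control of $\tau_{m,\ell}(\psi_m(a_i))$ to the ratio $\tau_{m,\ell}(\psi_m(a_iq_m))/\tau_{m,\ell}(\psi_m(q_m))$. Your normalisation produces an upper bound proportional to $\|\tau_{m,\ell}\circ\psi_m\|$, not to $\tau_{m,\ell}(\psi_m(q_m))$, and these differ when $\mu<1$. The fix---and this is what the paper does---is to base the block selection on the functionals $b\mapsto\tau_{m,\ell}(\psi_m(bq_m))$: these are (in the ultralimit) positive tracial functionals on $A$ because $b\mapsto b\hat q$ maps $A$ into $\hat S$ where $\psi$ is order zero, and the hypothesis $\delta>\sup_{\sigma}\min_i\sigma(a_i)$ then applies to their normalisations. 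One must also discard blocks where this functional is (near) zero to ensure the normalisation makes sense; the paper does this by fixing an almost-unit $e\in A_+^1$ and thresholding on $\tau_{m,\ell}(\psi_m(eq_m))\geq\epsilon$, which simultaneously handles the non-unital case and replaces your sandwich argument for \eqref{eq:s-lemma1}.
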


\begin{proof}
	Without loss of generality, we may assume that $A \cup \{q\} \subseteq S$. By central surjectivity (Lemma \ref{lem:CentralSurjection}), choose $\hat{q}\in (A_\omega\cap A')_+^1$ which quotients to $q\in A^\omega$.  
	Let $\hat{S} \subset A_\omega$ be a $\|\cdot\|$-separable $\mathrm{C}^*$-algebra containing $A\cup\{\hat{q}\}$ and whose image in $A^\omega$ is $\|\cdot\|_{2,T_\omega(A)}$-dense in $S$.
	
	Fix $0< \epsilon < \mu/2$. By Kirchberg's $\epsilon$-test, it suffices to find $b_1,\dots,b_k \in (A^\omega\cap S')_+^1$  satisfying 
	the weaker conditions 
	\begin{align}
	\label{eq:s-lemma1weak}
	\tau\big(\ssum_{i=1}^kb_iq\big)&\in [\mu-2\epsilon,\mu] \quad &&\tau\in T_\omega(A), \quad \text{and} \\
	\tau(a_i b_i q) &\leq \delta(1+\epsilon)\tau(b_iq), \qquad &&\tau \in T_\omega (A),\ i=1,\dots,k.\label{eq:s-lemma2weak}
	\end{align}
	
	Lift $\hat{q}$ to a sequence $(q_m)_{m=1}^\infty$ in $A_+^1$.
 	Let $(F_m,\psi_m,\phi_m)_{m=1}^\infty$ be as in Lemma \ref{lem:BCW}.
	For each $m$, decompose $F_m$ as $\bigoplus_{\lambda\in\Lambda_m}F_{m,\lambda}$, where $\Lambda_m$ is a finite set and each $F_{m,\lambda}$ is a full matrix algebra.
	Correspondingly, $\psi_m$ decomposes as a direct sum $\bigoplus_{\lambda\in\Lambda_m} \psi_{m,\lambda}$.
	We denote the tracial state on $F_{m,\lambda}$ by $\tau_{F_{m,\lambda}}$.
	As $T(A)$ is compact, applying Dini's theorem to an approximate unit for $A$ gives $e \in A_+^1$ such that
	\begin{equation}
	\label{eq:s-lemma3} \tau(e) \geq \tfrac1{1+\epsilon}, \quad \tau \in T(A). \end{equation}
Then
	\begin{align}
	\tau(eq) =\tau(q)-\tau((1_{A^\omega}-e)q)&\geq \mu-\tau(1_{A^\omega}-e)\nonumber\\&\geq\mu -\tfrac{\epsilon}{1+\epsilon}\geq\mu-\epsilon,\quad \tau\in T_\omega(A). \label{eq:s-lemma4} 
	 \end{align}

	Let $\tau=(\tau_m)_{m=1}^\infty\in T_\omega(A)$ be a limit trace. For each $m$, by Lemma \ref{lem:BCW}(ii) and \cite[Corollary 4.4]{WZ09}, $\tau_m\circ\phi_m$ is a tracial functional on $F_m$ of norm at most $1$ (since $\phi_m$ is contractive), and hence of the form 
	\begin{equation}\label{s-lemma:saw.8}
	\tau_m \circ \phi_m = \ssum_{\lambda \in \Lambda_m} \alpha^{(m,\tau_m)}_\lambda \tau_{F_{m,\lambda}},
	\end{equation}
	for positive real numbers $\alpha_\lambda^{(m,\tau_m)}$ satisfying
	\begin{equation}\label{s-lemma:saw.10}
	\ssum_{\lambda\in\Lambda_m}\alpha_\lambda^{(m,\tau_m)}=\|\tau_m\circ\phi_m\| \leq 1.
	\end{equation} 
	
	For each $m$, define
	\begin{equation}
	\tilde{\Lambda}_m := \{\lambda\in\Lambda_m : \tau_{F_{m,\lambda}} ( \psi_{m,\lambda}(eq_m)) \geq \epsilon  \}. \label{s-lemma:saw.2}
	\end{equation}
We first claim that 
\begin{equation}\label{eq:s-lem-newsaw.1}
\{m\in\mathbb N:\tilde{\Lambda}_m\neq\emptyset\}\in\omega.
\end{equation}  If this is not the case, then for any $\tau=(\tau_m)_{m=1}^\infty\in T_\omega(A)$, as above, using that $\phi\psi(e\hat{q})=eq$, we have
\begin{align}
\tau(eq)&=(\tau\circ\phi)(\psi(e\hat{q}))=\lim_{m\rightarrow \omega}\ssum_{\lambda\in\Lambda_m}\alpha_\lambda^{(m,\tau_m)}\tau_{F_{m,\lambda}}(\psi_m(eq_m))\leq\epsilon,
\end{align}
which combines with (\ref{eq:s-lemma4}) to contradict the choice of $\epsilon<\mu/2$.

Inductively define
	for $i=1,\dots,k$,
	\begin{align} \label{s-lemma:saw.7} 
	&\tilde{\Lambda}_m^{(i)} := \\
\notag &\hspace*{-4em} \big\{ \lambda \in \tilde{\Lambda}_m : \tau_{F_{m,\lambda}} (\psi_{m,\lambda} (a_iq_m)) \leq \delta(1+\epsilon)\tau_{F_{m,\lambda}} (\psi_{m,\lambda} (q_m))  \big\} 
	\setminus \textstyle\bigcup\limits_{j=1}^{i-1} \tilde{\Lambda}_m^{(j)}. \notag
	\end{align}
We next claim that 
	\begin{equation}\label{s-lemma:claim}
	\big\{m:\tilde{\Lambda}_m=\textstyle\bigcup\limits_{i=1}^k\tilde{\Lambda}_m^{(i)}\big\}\in\omega.
	\end{equation} If this is not the case, then there exists
	$\lambda_m \in \Lambda_m$ for each $m$, such that 
	\begin{equation}
	\label{eq:s-lemma-contradiction}
	\big\{m: \lambda_m \in \tilde{\Lambda}_m \setminus \textstyle\bigcup\limits_{i=1}^k \tilde{\Lambda}_m^{(i)}\big\}\in\omega
	\end{equation}
(this uses \eqref{eq:s-lem-newsaw.1}).
	Define a linear functional $\sigma$ on $A$ by
	\begin{equation}
	\sigma(b) := \lim_{m \rightarrow \omega} \tau_{F_{m,\lambda_m}} (\psi_{m,\lambda_m} (bq_m)),\quad b\in A.
	\end{equation}
	By Lemma \ref{lem:BCW}(iii) and since $\hat{q}\in A_\omega\cap A'$, $\sigma$ factors as an order zero map $A\rightarrow \prod_{\omega}F_{m,\lambda_m}$ followed by the canonical limit trace on this ultraproduct. Thus it is a tracial functional by \cite[Corollary 4.4]{WZ09}.  Now 
	\begin{equation}
	\sigma(e)=\lim_{m\rightarrow\omega}\tau_{F_{m,\lambda_m}}(\psi_{m,\lambda_m}(eq_m))\geq\epsilon,
	\end{equation}
	by \eqref{s-lemma:saw.2}, as $\{m:\lambda_m\in\tilde{\Lambda}_m\}\in\omega$. In particular, $\sigma\neq 0$. By hypothesis, there exists $i\in\{1,\dots,k\}$ such that $\sigma(a_i) < \delta \|\sigma\|$.
	Since
	\begin{equation} \|\sigma\| \stackrel{\eqref{eq:s-lemma3}}\leq (1+\epsilon)\sigma(e) = (1+\epsilon) \lim_{m\to\omega} \tau_{F_{m,\lambda_m}}(\psi_{m,\lambda_m}(eq_m)),  \end{equation} we get from $\sigma(a_i)<\delta\|\sigma\|$ that
	\begin{equation}
	\lim_{m \rightarrow \omega} \tau_{F_{m,\lambda_m}} (\psi_{m,\lambda_m} (a_iq_m)) < \delta(1+\epsilon)\lim_{m \rightarrow \omega} \tau_{F_{m,\lambda_m}} (\psi_{m,\lambda_m} (q_m)).
	\end{equation}
	But then the definition of the sets $\tilde{\Lambda}_m^{(j)}$ ensures that $\lambda_m \in \bigcup_{j=1}^i \tilde{\Lambda}_m^{(j)}$ for $\omega$-many $m$.  This contradicts \eqref{eq:s-lemma-contradiction}, and proves the claim.

	Now, for $i=1,\dots,k$, and $m \in \mathbb N$, define 
	\begin{align}
	f_m^{(i)} &:= \textstyle\bigoplus\limits_{\lambda \in \tilde{\Lambda}_m^{(i)}} 1_{F_{m,\lambda}}\in (F_m)_+^1\quad \text{and} \label{s-lemma:saw-5}\quad b_m^{(i)} := \phi_m(f_m^{(i)}) \in A_+^1.
	\end{align}
	Set $f_i:=(f_m^{(i)})_{m=1}^\infty \in \prod_\omega F_m$ and $b_i:=(b_m^{(i)})_{m=1}^\infty\in A^\omega$, so $b_i=\phi(f_i)$.
Since each $\phi_m$ is a contractive map, note that $\sum_{i=1}^k b_i$ has norm at most $1$, and thus $\tau(\sum_{i=1}^k b_iq) \leq \tau(q) = \mu$ for $\tau\in T_\omega(A)$.
	
	By Lemma \ref{lem:BCW}(i), it follows by the Stinespring argument of \cite[Lemma 3.5]{KW04}, that $\psi$ maps $\hat{S}$ into the multiplicative domain of $\phi$,
	and so for $x \in \hat{S}$ and $\bar{x} \in S$ its image,
	\begin{equation}\label{s-lemma:saw.3}
	\phi(\psi(x)y)=\phi(\psi(x))\phi(y)=\bar{x}\phi(y),\quad y\in\textstyle\prod_{\omega}F_m.
	\end{equation}
	In particular, as $f_i$ is central in $\prod_\omega F_m$ we have
	\begin{equation}
	\bar xb_i=\bar x\phi(f_i)\stackrel{\eqref{s-lemma:saw.3}}=\phi(\psi(x)f_i)=\phi(f_i\psi(x))\stackrel{\eqref{s-lemma:saw.3}}=\phi(f_i)\bar x=b_i\bar x.
	\end{equation}
	Since the image of $\hat{S}$ is $\|\cdot\|_{2,T_\omega(A)}$-dense in $S$, it follows that $b_i\in A^\omega\cap S'$. 

	For $i=1,\dots,k$, and $\tau=(\tau_m)_{m=1}^\infty\in T_\omega(A)$, we have
	\begin{eqnarray}
	\notag
	\tau(a_i b_i q) \hspace*{-1em} &=&\tau(a_i q\phi(f_i)) \stackrel{\eqref{s-lemma:saw.3}}= \tau(\phi(\psi(a_i\hat{q})f_i)) \\
	\notag
	&=&\lim_{m\rightarrow\omega}\tau_m\big(\phi_m(\psi_m(a_iq_m)f_m^{(i)})\big)\\
	\notag
	&\stackrel{(\ref{s-lemma:saw.8}),(\ref{s-lemma:saw-5})}{=}&\lim_{m\rightarrow\omega}\ssum_{\lambda\in \tilde{\Lambda}_m^{(i)}}\alpha_\lambda^{(m,\tau_m)}\tau_{F_{m,\lambda}}(\psi_{m,\lambda}(a_iq_m))\\
	\notag
	&\stackrel{(\ref{s-lemma:saw.7})}{\leq}&\delta(1+\epsilon)\lim_{m\rightarrow\omega}\ssum_{\lambda\in \tilde{\Lambda}_m^{(i)}}\alpha_\lambda^{(m,\tau_m)}\tau_{F_{m,\lambda}}(\psi_{m,\lambda}(q_m))\\
	&\stackrel{(\ref{s-lemma:saw.8}),(\ref{s-lemma:saw-5})}{=}&\delta(1+\epsilon)\lim_{m\rightarrow\omega} \tau_m\big(\phi_m(\psi_m(q_m)f_m^{(i)})\big)
	\stackrel{(\ref{s-lemma:saw.3})}{=} \delta(1+\epsilon)\tau(b_i q),
	\end{eqnarray}
	proving (\ref{eq:s-lemma2weak}).	Likewise, we have
	\begin{eqnarray}
	\tau\big(\ssum_{i=1}^k b_i q\big)\nonumber
	&\geq& \ssum_{i=1}^k \tau(e b_i q) \nonumber\\
	&\stackrel{\eqref{s-lemma:saw.3}}=& \lim_{m \rightarrow \omega} \ssum_{i=1}^k (\tau_m \circ \phi_m)\big(\psi_m(eq_m)f_m^{(i)}\big) \nonumber\\
	&\stackrel{(\ref{s-lemma:saw.8}),(\ref{s-lemma:saw-5})}{=}& \lim_{m \rightarrow \omega} \ssum_{i=1}^k \ssum_{\lambda\in \tilde{\Lambda}_m^{(i)}} \alpha^{(m,\tau_m)}_\lambda \tau_{F_{m,\lambda}} (\psi_{m,\lambda} (eq_m))\nonumber\\
	&\stackrel{(\ref{s-lemma:claim})}{=}& \lim_{m \rightarrow \omega} \ \ssum_{\lambda\in \Lambda_m} \alpha^{(m,\tau_m)}_\lambda \tau_{F_{m,\lambda}} (\psi_{m,\lambda} (eq_m))\nonumber\\
	&& \quad-\lim_{m \rightarrow \omega} \ssum_{\lambda\in \Lambda_m\setminus\tilde{\Lambda}_m} \alpha^{(m,\tau_m)}_\lambda \tau_{F_{m,\lambda}} (\psi_{m,\lambda} (eq_m))\nonumber\\
	&\stackrel{(\ref{s-lemma:saw.8}),(\ref{s-lemma:saw.2})}{\geq}&\tau(\phi(\psi(e\hat{q})))-\epsilon\lim_{m\rightarrow\omega}\ssum_{\lambda\in\Lambda_m\setminus \tilde{\Lambda}_m}\alpha^{(m,\tau_m)}_\lambda\nonumber\\
	&\stackrel{\eqref{eq:s-lemma4},(\ref{s-lemma:saw.10})}\geq& \mu-2\epsilon,
	\end{eqnarray} establishing (\ref{eq:s-lemma1weak}).
\end{proof}

We now show how the conclusion of the previous lemma combines with uniform property $\Gamma$ to give CPoU.  The strategy has the spirit of geometric series arguments used in \cite{Wi12,Wi16}, but as we work only with projections of constant value on (limit) traces we can use a maximality argument directly.  Note that we do not require $A$ to be nuclear for this step, only that the conclusion of Lemma \ref{s-lemma} has been obtained.

\begin{lemma}\label{lem:GeomSeriesCPOU}
	Let $A$ be a separable $\mathrm{C}^*$-algebra with $T(A)$ non-empty and compact.
	Suppose that $A$ has uniform property $\Gamma$ and that the conclusion of Lemma \ref{s-lemma} holds.
	Then $A$ has CPoU.
\end{lemma}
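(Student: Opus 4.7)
The plan is to iteratively combine Lemma \ref{s-lemma}, tracial projectionisation (Lemma \ref{lem:Projectionization}), and uniform property $\Gamma$ (via Lemma \ref{lem:GammaS}), carving out at each stage a fraction $1/k$ of an available constant-trace central projection, and then pass to the $\|\cdot\|_{2,T_\omega(A)}$-limit via Lemma \ref{BallComplete}. By Remark \ref{prop:CPoUReformulation:rem} I reduce to the case $a_1,\dots,a_k\in A_+$. Setting $q_0:=1_{A^\omega}$, the goal is to inductively construct, for each $n\geq 1$, projections $p_1^{(n)},\dots,p_k^{(n)}\in A^\omega\cap S'$ that are pairwise orthogonal and dominated by $q_{n-1}$, satisfying $\tau(a_ip_i^{(n)})\leq\delta\tau(p_i^{(n)})$ for every $\tau\in T_\omega(A)$, and such that $q_n:=q_{n-1}-\sum_ip_i^{(n)}$ lies in $A^\omega\cap A'$ with $\tau(q_n)=((k-1)/k)^n$ for every $\tau\in T_\omega(A)$.

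At stage $n$, I first apply Lemma \ref{s-lemma} to $q_{n-1}$, with the separable subset enlarged to include $S\cup\{q_{n-1}\}\cup\{p_i^{(m)}:m<n,\,i=1,\dots,k\}$; this produces $b_1^{(n)},\dots,b_k^{(n)}\in(A^\omega)_+^1$ commuting with $q_{n-1}$, with $\tau(\sum_ib_i^{(n)}q_{n-1})=((k-1)/k)^{n-1}$ and $\tau(a_ib_i^{(n)}q_{n-1})\leq\delta\tau(b_i^{(n)}q_{n-1})$. Each $b_i^{(n)}q_{n-1}$ is then a positive contraction, so successive applications of Lemma \ref{lem:Projectionization} (with $T$ containing $1$, $1-q_{n-1}$, the $a_i$, and the previously produced $\tilde p_{i'}^{(n)}$'s) yield commuting projections $\tilde p_1^{(n)},\dots,\tilde p_k^{(n)}\in A^\omega\cap S'$ with $\tau(\tilde p_i^{(n)})=\tau(b_i^{(n)}q_{n-1})$ and $\tau(a_i\tilde p_i^{(n)})\leq\delta\tau(\tilde p_i^{(n)})$; testing against $1-q_{n-1}$ gives $\tau((1-q_{n-1})\tilde p_i^{(n)})=0$ for all $\tau\in T_\omega(A)$, so $\tilde p_i^{(n)}\leq q_{n-1}$. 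Finally, Lemma \ref{lem:GammaS} provides pairwise orthogonal projections $r_1^{(n)},\dots,r_k^{(n)}$ summing to $1_{A^\omega}$ and commuting with everything so far together with $\{a_i\tilde p_i^{(n)}\}_i$, satisfying $\tau(r_j^{(n)}x)=\tfrac{1}{k}\tau(x)$ for $x$ in the designated separable set. Setting $p_i^{(n)}:=\tilde p_i^{(n)}r_i^{(n)}$ yields pairwise orthogonal projections $\leq q_{n-1}$ with $\tau(p_i^{(n)})=\tfrac1k\tau(\tilde p_i^{(n)})$ and $\tau(a_ip_i^{(n)})=\tfrac1k\tau(a_i\tilde p_i^{(n)})\leq\delta\tau(p_i^{(n)})$, while summation gives $\tau(q_n)=((k-1)/k)^n$, completing the inductive step.

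To finish, for each $i$ the partial sums $P_i^{(N)}:=\sum_{n=1}^Np_i^{(n)}$ are projections with $\|P_i^{(M)}-P_i^{(N)}\|_{2,T_\omega(A)}^2\leq((k-1)/k)^N$ for $M>N$, hence $\|\cdot\|_{2,T_\omega(A)}$-Cauchy. Lemma \ref{BallComplete} produces a limit $p_i\in(A^\omega)^1$. Using that multiplication by a bounded element is $\|\cdot\|_{2,T_\omega(A)}$-continuous, each $p_i$ is a projection in $A^\omega\cap S'$, the family $(p_i)$ remains pairwise orthogonal, $\sum_ip_i=1_{A^\omega}$ (as $\|q_N\|_{2,T_\omega(A)}\to 0$), and the inequality $\tau(a_ip_i)\leq\delta\tau(p_i)$ passes to the limit, establishing CPoU.

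The main obstacle I anticipate is the careful bookkeeping of separable subsets in the inductive step, so that each application of Lemma \ref{lem:Projectionization} and Lemma \ref{lem:GammaS} produces elements commuting with, and having the correct traces against, all quantities built in earlier stages; this is what enables the product $p_i^{(n)}=\tilde p_i^{(n)}r_i^{(n)}$ to simultaneously achieve orthogonality and the tracial inequality. A secondary point needing care is the passage to the $\|\cdot\|_{2,T_\omega(A)}$-limit, where Lemma \ref{BallComplete} must be combined with 2-norm continuity of multiplication to verify that the limit is genuinely a projection in $A^\omega\cap S'$, rather than merely a 2-norm limit of projections.
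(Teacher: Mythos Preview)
Your proof is correct and uses the same single-step mechanism as the paper (Lemma \ref{s-lemma}, then tracial projectionisation via Lemma \ref{lem:Projectionization}, then orthogonalisation via Lemma \ref{lem:GammaS}), but packages the passage from one step to the full result differently. The paper runs the single step once inside a maximality argument: it lets $I\subseteq[0,1]$ be the set of values $\alpha$ for which pairwise orthogonal projections $p_1,\dots,p_k\in A^\omega\cap S'$ exist with $\tau(\sum_ip_i)=\alpha$ and $\tau(a_ip_i)\leq\delta\tau(p_i)$, uses Kirchberg's $\epsilon$-test to see that $I$ is closed so $\beta:=\sup I\in I$, and then shows that the single step applied beneath $q=1-\sum_ip_i$ puts $\beta+\tfrac{1-\beta}{k}\in I$, forcing $\beta=1$. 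You instead iterate the step explicitly, obtain a geometric series of orthogonal projections, and take a $\|\cdot\|_{2,T_\omega(A)}$-limit via Lemma \ref{BallComplete}. The paper's route is more concise and avoids the limit bookkeeping you flag in your final paragraph; your route makes the geometric decay $(\tfrac{k-1}{k})^n$ explicit and replaces the $\epsilon$-test for closedness of $I$ by completeness of the unit ball. One small point to tidy: Lemma \ref{s-lemma} requires $q_{n-1}\in A^\omega\cap A'$, so you should include $A$ in the separable set $S$ at the outset (harmless since $A$ is separable); this then guarantees inductively that each $p_i^{(n)}$, and hence each $q_n$, lies in $A^\omega\cap A'$.
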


\begin{proof}
As in Remark \ref{prop:CPoUReformulation:rem}, it suffices to verify CPoU for positive elements $a_1, \dots, a_k \in A$ and $\delta>0$ such that
	\begin{equation}
	\delta>\sup_{\tau\in T(A)}\min\{\tau(a_1),\dots,\tau(a_k)\}.
	\end{equation}
	Let $S\subset A^\omega$ be a $\|\cdot\|_{2,T_\omega(A)}$-separable subset.

	Let $I$ denote the set of all $\alpha\in [0,1]$ such that there exist pairwise orthogonal projections $p_1, \dots, p_k \in A^\omega \cap S'$ such that for $i=1,\dots,k$,
	\begin{equation}\label{POU.defI}
	\tau\big(\ssum_{i=1}^k p_i\big) = \alpha \quad \text{and} \quad \tau(a_i p_i) \leq \delta \tau(p_i) ,\quad \tau\in T_\omega(A).
	\end{equation}
	First of all, notice that $0\in I$, as $p_1 = p_2 = \dots = p_k :=0$ satisfy \eqref{POU.defI}, so $I\neq\emptyset$. By Kirchberg's $\epsilon$-test, $I$ is closed, so that $\beta:=\sup I\in I$.  	Our objective is to show that $\beta=1$, as then this gives CPoU.

Suppose for a contradiction that $\beta<1$ and let $p_1^{(1)}, \dots, p_k^{(1)} \in A^\omega \cap S'$ be orthogonal projections such that 
	\begin{equation}
	\tau\big(\ssum_{i=1}^k p^{(1)}_i\big) = \beta \quad \text{and} \quad \tau(a_i p^{(1)}_i) \leq \delta \tau ( p^{(1)}_i )\label{1/k-lemma eq1}
	\end{equation}
	for all $\tau \in T_\omega (A)$.
	Using the conclusion of Lemma \ref{s-lemma} (which we are assuming holds for $A$), with $q:=1_{A^\omega}-(p^{(1)}_1+\cdots+p^{(1)}_k)$ and $S\cup \{q\}$ in place of $S$, there exist $b^{(2)}_1, \cdots, b^{(2)}_k \in (A^\omega \cap S' \cap \{q\}')_+^1$
  such that
	\begin{align}
	\tau\big(\ssum_{i=1}^k b^{(2)}_i q\big) = 1-\beta \quad \text{and} \quad \tau(a_i b^{(2)}_i q) \leq \delta \tau(b^{(2)}_iq) \label{1/k-lemma eq2}
	\end{align}
	for all $\tau \in T_\omega (A)$ and $i=1, \dots, k$.
	As $A$ has uniform property $\Gamma$, for each $i$, we can apply Lemma \ref{lem:Projectionization} to $b^{(2)}_i$ to obtain 
	a projection $p^{(2)}_i \in A^\omega \cap S' \cap \{q\}'$ such that 
	\begin{equation}
	\tau(qp^{(2)}_i)=\tau(qb^{(2)}_i)\text{ and }\tau(a_iqp^{(2)}_i)=\tau(a_iqb^{(2)}_i),\quad \tau\in T_\omega(A). 	\label{1/k-lemma eq3}
	\end{equation} Set
	\begin{equation}
	\quad T:= A\cup S\cup\{q,p^{(1)}_1, \dots, p^{(1)}_k, p^{(2)}_1, \dots, p^{(2)}_k\} \subset A^\omega. \end{equation}
	Using uniform property $\Gamma$, Lemma \ref{lem:GammaS} gives pairwise orthogonal projections $r_1,\dots,r_k \in A^\omega\cap T'$ such that
	\begin{equation}\label{1/k-lem.saw.2}
	\tau(r_iy)=\tfrac{1}{k}\tau(y),\quad \tau\in T_\omega(A),\ y\in T,\ i=1,\dots,k.
	\end{equation}
	Define
	\begin{align}
	p_i & := p^{(1)}_i + q p^{(2)}_i r_i, \qquad i =1, \dots, k. \label{1/k-lemma eq4}
	\end{align}
	By construction, $p_1,\dots,p_k \in A^\omega \cap S'$ are pairwise orthogonal projections.
	We will verify that they satisfy (\ref{POU.defI}) for the value $\alpha:=\beta+\frac{1}{k}(1-\beta)>\beta$, providing the required contradiction.

	Let $\tau \in T_\omega (A)$. Then
	\begin{eqnarray}
	\tau\big(\ssum_{i=1}^k p_i\big) & \stackrel{\eqref{1/k-lemma eq4},(\ref{1/k-lem.saw.2})}{=}& \tau\big(\ssum_{i=1}^n p^{(1)}_i\big)+\frac{1}{k}\tau\big(\ssum_{i=1}^k q p^{(2)}_i\big) \notag \\
	& \stackrel{\eqref{1/k-lemma eq1},\eqref{1/k-lemma eq2},\eqref{1/k-lemma eq3}}{=} &\beta + \tfrac{1-\beta}{k},
	\end{eqnarray}
	establishing the first part of \eqref{POU.defI}.
	Also, for $i=1,\dots,k$,
	\begin{eqnarray}
	\tau(a_i p_i) &  \stackrel{\eqref{1/k-lemma eq4}}{=}& \tau(a_i p^{(1)}_i) + \tau(a_i q p^{(2)}_i r_{i}) \notag \\
	&\stackrel{\eqref{1/k-lemma eq3},\eqref{1/k-lem.saw.2}}=& \tau(a_i p^{(1)}_i) + \tfrac1k \tau(a_i q b^{(2)}_i) \notag \\
	& \stackrel{\eqref{1/k-lemma eq1},\eqref{1/k-lemma eq2}}{\leq}& \delta \tau(p^{(1)}_i) + \tfrac{1}{k}\delta \tau(b^{(2)}_iq ) \notag \\
	& \stackrel{\eqref{1/k-lemma eq3},\eqref{1/k-lem.saw.2}}{=}&\delta \tau(p^{(1)}_i + p^{(2)}_iq r_i) 
	\stackrel{\eqref{1/k-lemma eq4}}{=} \delta \tau(p_i). 
	\end{eqnarray}
	Thus $p_1,\dots,p_k$ witnesses that $\beta+\frac{1}{k}(1-\beta)\in I$, a contradiction.
\end{proof}

Combining the two previous results yields our main technical result, which we will subsequently use to obtain nuclear dimension estimates.
Theorem \ref{thm:mainCPoU} follows immediately from Theorem \ref{thm:GammaImpliesCPoUTechnical} and Proposition \ref{prop:GammaZ}.

\begin{theorem}
	\label{thm:GammaImpliesCPoUTechnical}
	Let $A$ be a separable, nuclear $\mathrm{C}^*$-algebra with $T(A)$ non-empty and compact, and with uniform property $\Gamma$.
	Then $A$ has CPoU. 
	\end{theorem}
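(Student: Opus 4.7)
The theorem is an immediate combination of the two preceding lemmas, so the plan is essentially to observe that the hypotheses of Lemma \ref{lem:GeomSeriesCPOU} are satisfied. Here is how I would organize it.

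First, I would note that the hypothesis that $A$ is nuclear, separable and has compact non-empty tracial state space is exactly what Lemma \ref{s-lemma} requires. So the conclusion of Lemma \ref{s-lemma} holds for $A$: for any $\|\cdot\|_{2,T_\omega(A)}$-separable subset $S\subseteq A^\omega$, any projection $q\in A^\omega\cap A'$ having constant value $\mu\in(0,1]$ on all limit traces, any $a_1,\dots,a_k\in A_+$, and any $\delta$ exceeding the supremum of $\min_i\tau(a_i)$ over $\tau\in T(A)$, one obtains contractions $b_1,\dots,b_k\in (A^\omega\cap S')_+^1$ satisfying \eqref{eq:s-lemma1} and \eqref{eq:s-lemma2}.

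Next, $A$ is assumed to have uniform property $\Gamma$, so the other hypothesis of Lemma \ref{lem:GeomSeriesCPOU} is directly in place. Applying Lemma \ref{lem:GeomSeriesCPOU} then yields that $A$ has CPoU, which is the desired conclusion.

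There is no real obstacle here; the technical content has been packaged into the two preceding lemmas. The combinatorial/maximality argument producing orthogonal projections summing to the unit is already done in Lemma \ref{lem:GeomSeriesCPOU}, where uniform property $\Gamma$ is used to tracially projectionise the contractions $b_i$ supplied by Lemma \ref{s-lemma} (via Lemma \ref{lem:Projectionization}), and then to cut by equivalent central projections of constant trace $1/k$ (via Lemma \ref{lem:GammaS}) so that a geometric-series style maximality argument on the set $I$ of achievable totals can close off. The only thing to check when writing this up is that the two lemmas have been stated in exactly the form needed to chain together, which they have been: the separable subset $S$ and the parameters $a_1,\dots,a_k,\delta$ propagate without modification. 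Hence the proof reduces to two sentences invoking Lemmas \ref{s-lemma} and \ref{lem:GeomSeriesCPOU} in succession.
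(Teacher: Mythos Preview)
Your proposal is correct and matches the paper's own proof exactly: the paper's argument is the single sentence ``This follows by combining Lemma~\ref{s-lemma} with Lemma~\ref{lem:GeomSeriesCPOU}.'' Your additional commentary about how the lemmas chain together is accurate but unnecessary for the formal proof.
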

	\begin{proof}
	This follows by combining Lemma \ref{s-lemma} with Lemma \ref{lem:GeomSeriesCPOU}.
\end{proof}	

\section{Structural results for relative commutants and classification of maps out of cones}\label{S4}

\noindent
With the key ingredient of complemented partitions of unity in place, we now turn to showing how to use it to obtain our main results. In this section we obtain the key structural results for $B^\omega$ (and its relative commutants), and transfer these to $B_\omega$ (and its relative commutants) leading to classification results for order zero maps, which will be used to obtain finite nuclear dimension in Section \ref{S5}.  

The strategy closely follows the arguments of \cite{BBSTWW}. The key difference is our use of CPoU to remove the hypothesis that the traces form a Bauer simplex from \cite{BBSTWW}. In comparing the results of this section with \cite{BBSTWW}, recall from Section \ref{S1.2}\eqref{TracialUltrapowerEx2} that when $T(B)$ is a Bauer simplex, $B^\omega$ is canonically identified with the ultraproduct $\mathcal M^\omega$ of the $\mathrm{W}^*$-bundle $\mathcal M$ obtained from $B$, and the results of \cite[Section 3]{BBSTWW} are all expressed in terms of this $\mathrm{W}^*$-bundle.\footnote{The hypothesis that $B$ has CPoU used here, replaces the hypothesis that $\mathcal M$ is McDuff used in \cite{BBSTWW}; both are obtained from $\mathcal Z$-stability of $B$.}

To make it transparent how we adapt the proofs of Sections 4 and 5 of \cite{BBSTWW}, we start by showing how CPoU can be used to obtain a version of the key local-to-global transfer lemma (\cite[Lemma 3.18]{BBSTWW}), used (there) to glue fibrewise properties of trivial $\mathrm{W}^*$-bundles to global properties.
In the following lemma, think of the equation $h_n(a,y)=0$ as a condition we want $y$ to satisfy, with $a$ a fixed constant. For example, $h(a,y)=ay-ya$ describes the condition $y\in \{a\}'$. The idea is that if for each trace there is an approximate solution to a family of conditions, then there are exact solutions in $B^\omega$ (i.e., $\|\cdot\|_{2,T(B)}$-approximate solutions in $B$).

\begin{lemma}[{cf.\ \cite[Lemma 3.18]{BBSTWW}}]
\label{lem:RealizingTypes}
Let $B$ be a separable, unital $\mathrm{C}^*$-algebra with $T(B)$ non-empty, and with CPoU. For each $m\in\mathbb N$, 
let \begin{equation}
h_m(x_1,\dots,x_{r_m},z_1,\dots,z_{s_m})
\end{equation} be a $^*$-polynomial in $r_m+s_m$ non-commuting variables.
Let $(a_i)_{i=1}^\infty$ be a sequence from $B^\omega$.
Suppose that, for every $\epsilon > 0$, $k \in \mathbb N$, and $\tau\in T(B^\omega)$ in the closure of $T_\omega(B)$, there exist contractions $y^\tau_1,y^\tau_2,\dots\in \pi_\tau(B^\omega)''$ (where $\pi_\tau$ is the GNS representation associated to $\tau$) such that
\begin{equation}
\label{eq:RealizingTypesFibre}
\|h_m(a_1,\dots,a_{r_m},y^\tau_1,\dots,y^\tau_{s_m})\|_{2,\tau} < \epsilon,\quad m=1,\dots,k.
\end{equation}
Then there exist contractions $y_i \in B^\omega$ for $i\in\mathbb N$ such that
\begin{equation}
\label{eq:RealizingTypesConclusion}
h_m(a_1,\dots,a_{r_m},y_1,\dots,y_{s_m}) =0,\quad m\in\mathbb N.
\end{equation}
\end{lemma}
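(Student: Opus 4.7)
The plan is to reduce, via Kirchberg's $\epsilon$-test, to the following approximate statement: for every $\epsilon>0$ and $k\in\mathbb{N}$ there exist contractions $y_1,\dots,y_{s_k}\in B^\omega$ with $\|h_m(a_1,\dots,a_{r_m},y_1,\dots,y_{s_m})\|_{2,T_\omega(B)}<\epsilon$ for $m=1,\dots,k$. Given such approximate solutions for every pair $(\epsilon,k)$, a standard diagonal argument through the $\epsilon$-test (applied at the level of representative sequences in $B$) yields contractions $y_i\in B^\omega$ with $\|h_m(a,y)\|_{2,T_\omega(B)}=0$ for all $m$, which by \eqref{eq:TraceKernelEq} forces $h_m(a,y)=0$ in $B^\omega$ as required.

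To produce approximate solutions for fixed $\epsilon$ and $k$, I first turn the fibrewise von Neumann data into ultrapower data. For each $\tau$ in the weak-$*$ compact set $\overline{T_\omega(B)}\subseteq T(B^\omega)$ (compactness uses that $B$ is unital, so $B^\omega$ is unital by Proposition~\ref{lem:ZeroInAomega}), the hypothesis supplies contractions $y_j^\tau\in\pi_\tau(B^\omega)''$ with $\|h_m(a,y^\tau)\|_{2,\tau}<\epsilon/2$ for $m\leq k$. Because $\pi_\tau(B^\omega)$ is $\|\cdot\|_{2,\tau}$-dense in $\pi_\tau(B^\omega)''$ by Kaplansky's density theorem and polynomials are jointly $\|\cdot\|_{2,\tau}$-continuous on operator-norm-bounded sets, I can lift each $y_j^\tau$ to a contraction $\tilde{y}_j^\tau\in B^\omega$ while retaining $\|h_m(a,\tilde{y}^\tau)\|_{2,\tau}<\epsilon$ for all $m\leq k$.

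Next I combine these local choices using CPoU. The maps $\sigma\mapsto\sigma(h_m(a,\tilde{y}^\tau)^*h_m(a,\tilde{y}^\tau))$ are weak-$*$ continuous on $T(B^\omega)$, so each $\tau$ sits in a weak-$*$ open neighbourhood $U_\tau$ on which all $k$ of these functions are bounded above by $\epsilon^2$. Compactness of $\overline{T_\omega(B)}$ extracts a finite subcover $U_{\tau_1},\dots,U_{\tau_N}$. Writing $y_j^{(l)}:=\tilde{y}_j^{\tau_l}$ and $a_l:=\sum_{m=1}^kh_m(a,y^{(l)})^*h_m(a,y^{(l)})\in(B^\omega)_+$, one checks that $\sigma(a_l)<k\epsilon^2$ whenever $\sigma\in U_{\tau_l}$, whence $\sup_{\sigma\in T_\omega(B)}\min_l\sigma(a_l)\leq k\epsilon^2$. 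Applying CPoU with the separable subset $S:=\mathrm{C}^*\bigl(\{a_i\}_{i=1}^\infty\cup\{y_j^{(l)}\}_{j,l}\bigr)$ and threshold $\delta$ slightly above $k\epsilon^2$ produces orthogonal projections $p_1,\dots,p_N\in B^\omega\cap S'$ summing to $1_{B^\omega}$ with $\sigma(a_lp_l)\leq\delta\sigma(p_l)$ for every $\sigma\in T_\omega(B)$.

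Finally I glue: set $y_j:=\sum_lp_ly_j^{(l)}$. These are contractions because the $p_l$ are orthogonal projections in $S'$. The key computation, performed monomial by monomial using that each $p_l$ commutes with every $a_i$ and every $y_j^{(l')}$ together with $p_lp_{l'}=\delta_{l,l'}p_l$, gives $p_lh_m(a,y)=p_lh_m(a,y^{(l)})$ and hence
\begin{equation}
h_m(a,y)^*h_m(a,y)=\ssum_{l=1}^Np_lh_m(a,y^{(l)})^*h_m(a,y^{(l)})\leq\ssum_{l=1}^Np_la_l,
\end{equation}
so for $\sigma\in T_\omega(B)$ one has $\|h_m(a,y)\|_{2,\sigma}^2\leq\sum_l\sigma(p_la_l)\leq\delta$, providing the desired uniform bound. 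The step I expect to be most delicate is this gluing identity: it hinges on simultaneously arranging the $p_l$ to commute with every $y_j^{(l')}$ (not merely with the $a_i$), which is where including all the $y_j^{(l)}$ in $S$ before invoking CPoU is essential; without the orthogonality-plus-commutation packaged into CPoU the polynomial $h_m$ would not decompose tracially along the partition.
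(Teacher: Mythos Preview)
Your argument is correct and follows essentially the same route as the paper's own proof: reduce via Kirchberg's $\epsilon$-test, use Kaplansky to replace the fibrewise von Neumann solutions by elements of $B^\omega$, package the squared $2$-norms into a single positive element per trace, extract a finite subcover by compactness of $\overline{T_\omega(B)}$, apply CPoU with $S$ containing all the $a_i$ and all the candidate $y_j^{(l)}$, and glue via $y_j=\sum_l p_l y_j^{(l)}$. The only cosmetic difference is bookkeeping: the paper applies the hypothesis with tolerance $\epsilon/\sqrt{k}$ so that the CPoU threshold comes out as $\epsilon^2$, whereas you carry the factor $k$ through to a threshold just above $k\epsilon^2$; this is harmless since $\epsilon$ is arbitrary.
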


\begin{remark}
\label{rmk:UsingRealizingTypes}
Before giving the proof, we point out the differences between Lemma \ref{lem:RealizingTypes} and \cite[Lemma 3.18]{BBSTWW} beyond the change in framework from $\mathrm{W}^*$-bundles to uniform tracial ultrapowers $B^\omega$.

(i) \cite[Lemma 3.18]{BBSTWW} was set up with exactly $2m$ variables in the $m^\text{th}$ polynomial. This was just a notational device, which we abandon here.

(ii) In \cite[Lemma 3.18]{BBSTWW}, the hypothesis and conclusion concern the absolute value of the trace applied to the $^*$-polynomial, whereas here, we quantify approximate local solutions using the 2-norm coming from each trace. This is a genuine difference forced on us by the setup of CPoU, and the absolute value version in \cite[Lemma 3.18]{BBSTWW} is formally stronger. 
However, in all applications of \cite[Lemma 3.18]{BBSTWW} in \cite{BBSTWW}, the $^*$-polynomials are of the form $k_m(\cdot)^*k_m(\cdot)$, where $k_m$ is itself a non-commutative $^*$-polynomial, and thus when we evaluate at the trace, we are taking the 2-norm squared of $k_m$. 

(iii) The most fundamental difference between Lemma \ref{lem:RealizingTypes} and \cite[Lemma 3.18]{BBSTWW} is the collection of traces we must use.  When $T(B)$ is compact, and we take $\mathcal M_n$ to be the strict closure of $B$ in \cite[Lemma 3.18]{BBSTWW}, then the hypothesis and conclusion of \cite[Lemma 3.18]{BBSTWW} only refer to those traces on $B^\omega$ coming from the ultra-coproduct of the extremal traces on $B$.   In order to produce elements $a_i$ in Definition \ref{defn:CPOU} so we can use CPoU, it is necessary to work with all traces (not just extremal traces), and thus we must ask for approximate solutions in general (not necessarily factorial) GNS representations.  
Every time we use Lemma \ref{lem:RealizingTypes} here in place of \cite[Lemma 3.18]{BBSTWW} we are able to verify this stronger hypothesis (although it requires working with finite von Neumann algebras in place of just II$_1$ factors).
\end{remark}

\begin{proof}[Proof of Lemma \ref{lem:RealizingTypes}.]
Note that \eqref{eq:RealizingTypesConclusion} is the same as 
\begin{equation}
\|h_m(a_1,\dots,a_{r_m},y_1,\dots,y_{s_m})\|_{2,\tau}=0,\quad \tau \in T_\omega(B),
\end{equation}
(by \eqref{eq:TraceKernelEq}). Just as in the proof of \cite[Lemma 3.18]{BBSTWW}, by Kirchberg's $\epsilon$-test, it suffices to find for each $k \in \mathbb N$ and $\epsilon>0$, contractions $(y_i)_{i=1}^\infty \in B^\omega$ such that
\begin{equation}
\|h_m(a_1,\dots,a_{r_m},y_1,\dots,y_{s_m})\|_{2,\tau} \leq \epsilon,\ m=1,\dots,k,\ \tau \in T_\omega(B).
\end{equation}
So, let us fix $k\in \mathbb N$ and $\epsilon>0$.

By hypothesis, for each $\tau \in \overline{T_\omega(B)}$ there exist contractions $(y^\tau_i)_{i=1}^\infty$ in $\pi_\tau( B^\omega)''$ such that 
\begin{equation}\label{e4.4}
\|h_m(a_1,\dots,a_{r_m},y^\tau_1,\dots,y^\tau_{s_m}))\|^2_{2,\tau} < \tfrac{\epsilon^2}{k},\quad m=1,\dots,k.
\end{equation}
Using Kaplansky's density theorem, we may in fact assume each $y^\tau_i$ is in $B^\omega$ while retaining \eqref{e4.4}.
Set
\begin{equation}
\label{eq:RealizingTypesbtauDef} b^\tau := \ssum_{m=1}^k |h_m(a_1,\dots,a_{r_m},y^\tau_1,\dots,y^\tau_{s_m})|^2 \in B^\omega, \quad\tau\in \overline{T_\omega(B)},\end{equation}
so that
\begin{equation} \tau(b^\tau) = \ssum_{m=1}^k \|h_m(a_1,\dots,a_m,y^\tau_1,\dots,y^\tau_m)\|_{2,\tau}^2 < \epsilon^2,\quad \tau\in \overline{T_\omega(B)}. \end{equation}
By continuity and compactness, there are $\tau_1,\dots,\tau_n \in \overline{T_\omega(B)}$ such that
\begin{equation}
\epsilon^2>\sup_{\tau\in T_\omega(B)}\min_{i=1,\dots,n}\tau(b^{\tau_i}).
\end{equation}
By CPoU, there exist pairwise orthogonal projections 
\begin{equation}
\label{e4.9} p_1,\dots,p_n\in B^\omega\cap \{y^{\tau_1}_i,\dots,y^{\tau_n}_i,a_i:i\in\mathbb N\}' 
\end{equation}
such that
\begin{equation}
\label{eq:RealizingTypesCPoU} \ssum_{j=1}^n p_j = 1_{B^\omega} \quad \text{and}\quad \tau(b^{\tau_j}p_j) \leq \epsilon^2\tau(p_j),\quad \tau \in T_\omega(B). \end{equation}
Define $y_i:= \ssum_{j=1}^n p_jy^{\tau_j}_i$ for $i\in\mathbb N$.
Then, from \eqref{e4.9} and since the $p_j$ are a partition of unity consisting of projections,
\begin{align*}
\label{eq:RealizingTypesPolyCommut}
\nonumber |h_m(a_1,\dots,a_{r_m},y_1,\dots,y_{s_m})|^2 &\,= \ssum_{j=1}^n p_j \big|h_m(a_1,\dots,a_{r_m},y_1^{\tau_j},\dots,y_{s_m}^{\tau_j})\big|^2\\
&\stackrel{\eqref{eq:RealizingTypesbtauDef}}\leq \ssum_{j=1}^np_jb^{\tau_j}.
\end{align*}
Using this, we obtain
\begin{align}
\notag
\|h_m(a_1,\dots,a_{r_m},y_1,\dots,y_{s_m})\|_{2,\tau}^2
&= \tau(|h_m(a_1,\dots,a_{r_m},y_1,\dots,y_{s_m})|^2) \\
\notag&\stackrel{\eqref{eq:RealizingTypesPolyCommut}}\leq \ssum_{j=1}^n \tau(p_jb^{\tau_j}) \\
&\stackrel{\eqref{eq:RealizingTypesCPoU}}\leq \ssum_{j=1}^n \epsilon^2\tau(p_j) = \epsilon^2,
\end{align}
for all $\tau \in T_\omega(B)$, as required.
\end{proof}

Our first use of Lemma \ref{lem:RealizingTypes} is to obtain strict comparison for relative commutants in $B^\omega$.

\begin{lemma}[{cf.\ \cite[Lemma 3.20]{BBSTWW}}]\label{lem:StrictClosureStrictComp}
Let $B$ be a separable, unital $\mathrm{C}^*$-algebra with $T(B)$ non-empty, and with CPoU.
Let $S\subset B^\omega$ be a self-adjoint $\|\cdot\|_{2,T_\omega(B)}$-separable subset, and let $p$ be a projection in the centre of $B^\omega\cap S'$.
Then the $\mathrm{C}^*$-algebra $p(B^\omega\cap S')$ has strict comparison of positive elements by bounded traces, as in \cite[Definition 1.5]{BBSTWW}.
\end{lemma}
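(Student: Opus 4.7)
The plan is to follow the strategy of \cite[Lemma 3.20]{BBSTWW}, using Lemma \ref{lem:RealizingTypes} to replace their fibrewise-to-global argument over a Bauer simplex with a local-to-global argument over the GNS representations of arbitrary limit traces on $B^\omega$. The projection $p$, being central in $B^\omega \cap S'$, plays a passive role and is simply absorbed into the list of fixed data.

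Fix positive elements $a, b \in p(B^\omega\cap S')$ with $d_\tau(a) < d_\tau(b)$ for every bounded tracial functional $\tau$ on $p(B^\omega\cap S')$. It suffices to show that $(a-\epsilon)_+ \precsim b$ in $p(B^\omega \cap S')$ for every $\epsilon>0$, since this entails $a \precsim b$. By R\o{}rdam's criterion, such Cuntz subequivalence is witnessed by an element $v \in p(B^\omega\cap S')$ and some $\delta > 0$ with $v(b-\delta)_+ v^* = (a-\epsilon)_+$. Since $t \mapsto (t-\epsilon)_+$ and $t \mapsto (t-\delta)_+$ are continuous on compacts, this identity, together with the requirement that $v$ commute with a countable $\|\cdot\|_{2,T_\omega(B)}$-dense subset $(s_j)$ of $S$ and satisfy $v = pvp$, can be encoded to arbitrary precision as a countable family of $^*$-polynomial conditions in the fixed variables $a, b, p, (s_j)$ and the unknown $v$, of exactly the form handled by Lemma \ref{lem:RealizingTypes}.

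To invoke that lemma, I need approximate local solutions in each $\pi_\tau(B^\omega)''$ for $\tau \in \overline{T_\omega(B)}$. The vN algebra $\mathcal N_\tau := \pi_\tau(B^\omega)''$ is finite, with faithful normal tracial state induced by $\tau$, so the corner $\pi_\tau(p)\pi_\tau(B^\omega\cap S')''\pi_\tau(p)$ is a finite vN algebra. The hypothesis on dimension functions transfers to this corner through the canonical extension of $\tau|_{p(B^\omega\cap S')}$ to a normal tracial functional on the weak-$*$-closure, and strict comparison in finite vN algebras is automatic from the centre-valued trace. Combined with Kaplansky's density theorem, this yields elements $y^\tau \in B^\omega$ satisfying the required polynomial identities up to arbitrarily small $\|\cdot\|_{2,\tau}$-error. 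Feeding these fibrewise approximate solutions into Lemma \ref{lem:RealizingTypes} produces $v \in B^\omega$ satisfying the polynomial identities exactly, giving the required $v \in p(B^\omega \cap S')$ with $v(b-\delta)_+v^* = (a-\epsilon)_+$.

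The principal obstacle is arranging uniformity: the strict inequality $d_\tau(a) < d_\tau(b)$ is a priori only pointwise, whereas Lemma \ref{lem:RealizingTypes} needs fibrewise approximate solutions that all use the same $\epsilon$ (and whose error can be sent uniformly to zero). The standard remedy is to first pass to cut-downs $(a-\epsilon')_+$ and $(b-\epsilon'')_+$ for suitably chosen $\epsilon',\epsilon''>0$, exploiting the compactness of $\overline{T_\omega(B)}$ and upper semicontinuity of the dimension functions $d_\tau$ to obtain a definite quantitative gap before invoking the lemma. This is the only place where care is required; everything else is a direct translation of \cite[Lemma 3.20]{BBSTWW} from the Bauer setting to the CPoU setting.
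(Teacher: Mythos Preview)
Your overall strategy is correct and matches the paper's: replace the fibrewise-to-global argument of \cite[Lemma 3.20]{BBSTWW} by Lemma~\ref{lem:RealizingTypes}, and verify the local hypothesis using only that $\pi_\tau(B^\omega)''$ is a finite von Neumann algebra (rather than a II$_1$ factor). Your treatment of the local step and of the encoding of Cuntz subequivalence as polynomial conditions is essentially what is needed.

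However, you have omitted a genuine point that the paper singles out. Strict comparison, as in \cite[Definition~1.5]{BBSTWW}, is a statement about positive elements of $M_k\big(p(B^\omega\cap S')\big)$ for \emph{all} $k$, not just $k=1$. You fix $a,b\in p(B^\omega\cap S')$ and never address matrix amplifications. The reduction to $k=1$ in \cite[Lemma~3.20]{BBSTWW} works because one can replace the ambient algebra by its $k\times k$ matrices; but in the present setting that requires knowing that $M_k(B)$ again has CPoU, so that Lemma~\ref{lem:RealizingTypes} is available for $M_k(B)^\omega\cong M_k(B^\omega)$. This is precisely the content of Lemma~\ref{CPOU:Matrix}, and the paper's proof explicitly invokes it. Without this step your argument only establishes comparison for elements of $p(B^\omega\cap S')$ itself, which is weaker than the stated conclusion.

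A minor secondary comment: your handling of the uniformity issue (passing to cut-downs to get a definite gap) is the right idea, but you should be explicit that the relevant compactness and semicontinuity are exactly as in \cite[Lemma~3.20]{BBSTWW}; your appeal to ``compactness of $\overline{T_\omega(B)}$'' is a bit loose as stated. The paper does not dwell on this since it is unchanged from \cite{BBSTWW}.
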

\begin{proof}
This is obtained by following the proof \cite[Lemma 3.20]{BBSTWW} replacing $\mathcal M^\omega$ by $B^\omega$ and using Lemma \ref{lem:RealizingTypes} in place of \cite[Lemma 3.18]{BBSTWW}.  There are two things to note. 

Firstly, strict comparison is a property defined at the level of matrix amplifications. The first paragraph of the proof of Lemma 3.20 of \cite{BBSTWW}, notes that without loss of generality we do not need to perform this matrix amplification; this is equally valid in our setting as $M_k(B^\omega)\cong (M_k(B))^\omega$, and $M_k(B)$ inherits CPoU from $B$ by Lemma \ref{CPOU:Matrix}.  

Secondly, to verify the existence of approximate local solutions, \eqref{eq:RealizingTypesFibre} as in the last two paragraphs of the proof of \cite[Lemma 3.20]{BBSTWW}, we only require that $\pi_\tau(B^\omega)''$ is a finite von Neumann algebra,\footnote{In contrast to \cite{BBSTWW}, where the relevant $\pi_\tau(M^\omega)$ is automatically a von Neumann algebra, in our situation we must pass to the von Neumann closure, as $\pi_\tau(B^\omega)$ is not generally a von Neumann algebra.} so the additional traces required in the hypothesis of Lemma \ref{lem:RealizingTypes} (as discussed in Remark \ref{rmk:UsingRealizingTypes} (iii)) cause no difficulties.
\end{proof}

Our next goal is to use CPoU to determine all traces on relative commutants $B^\omega\cap\phi(A)'$, where $A$ is a separable, nuclear $\mathrm{C}^*$-algebra and $\phi$ is a $^*$-homomorphism.  The compact tracial boundary version of this result (phrased in terms of McDuff $\mathrm{W}^*$-bundles) is \cite[Proposition 3.22]{BBSTWW}, obtained by gluing together the fibrewise result of \cite[Lemma 3.21]{BBSTWW}.  In order to clarify the Hahn--Banach argument used at the very last part of these two results,\footnote{The results (\cite[Lemma 3.21 and Proposition 3.22]{BBSTWW}) are correct as stated, but the presentation of the argument using the Hahn--Banach theorem at the end of their proofs was inappropriately terse.} we give full details. We begin by extracting a Hahn--Banach argument from the proof of \cite[Theorem 8]{Oz13}.

\begin{lemma}[{cf.\ \cite[Theorem 8]{Oz13}}]\label{HB}
Let $A$ be a $\mathrm{C}^*$-algebra, and let  $T_0\subset T(A)$ be a collection of traces with the property that for all $z\in A$, 
\begin{equation}
\sup_{\tau\in T_0}|\tau(z)|=\sup_{\tau\in T(A)}|\tau(z)|.
\end{equation}
Then the weak$^*$-closed convex hull of $T_0$ is $T(A)$.
\end{lemma}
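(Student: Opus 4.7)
The plan is to argue by contradiction via Hahn--Banach separation in the weak$^*$ topology and then convert the separating self-adjoint element into one with positive trace everywhere so that the hypothesis on absolute values applies.

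Set $C := \overline{\mathrm{co}}^{w^*}(T_0)$, a weak$^*$-closed convex subset of $T(A)$ (the latter being weak$^*$-closed and convex, $C \subseteq T(A)$ is automatic). Suppose for contradiction there exists $\tau_0 \in T(A) \setminus C$. The locally convex space $(A^*, w^*)$ has continuous dual canonically identified with $A$ (each $a \in A$ giving the evaluation functional $\rho \mapsto \rho(a)$), so the Hahn--Banach separation theorem applied to the compact set $\{\tau_0\}$ and the closed convex set $C$ yields $a \in A$ and $\beta \in \mathbb{R}$ with $\operatorname{Re}(\tau(a)) \leq \beta < \operatorname{Re}(\tau_0(a))$ for every $\tau \in C$. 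Decomposing $a$ into real and imaginary parts and noting that tracial states are real on self-adjoint elements, we may replace $a$ by its self-adjoint part to obtain $a = a^* \in A$ satisfying
\begin{equation}
\tau(a) \leq \beta < \tau_0(a), \quad \tau \in C.
\end{equation}

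The next step is to convert this one-sided separation into a statement about absolute values by a translation. Working in $A$ if it is unital, or otherwise in the minimal unitisation $\tilde{A}$ (traces on $A$ extend uniquely to tracial states on $\tilde{A}$, and the hypothesis $\sup_{\tau \in T_0}|\tau(z)| = \sup_{\tau \in T(A)}|\tau(z)|$ persists when testing against $z \in A \subseteq \tilde{A}$), pick $s > \|a\|$ and put $z := a + s \cdot 1$. Then $s + \tau(a) \geq s - \|a\| > 0$ for every $\tau \in T(A)$, so $|\tau(z)| = s + \tau(a)$ for all such $\tau$. Consequently
\begin{equation}
\sup_{\tau \in T_0} |\tau(z)| \leq s + \beta < s + \tau_0(a) = |\tau_0(z)| \leq \sup_{\tau \in T(A)} |\tau(z)|,
\end{equation}
contradicting the hypothesis on $T_0$.

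The only subtle point is the translation step when $A$ is non-unital; but since the separation only produces a single self-adjoint element $a \in A$, we may simply work inside $\tilde{A}$ for the inequality involving $z = a + s \cdot 1$, while testing the hypothesis against elements of $A$ is not needed because the contradiction is between traces evaluated on $z$ and the hypothesis is only invoked for elements of $A$ --- and we can equally run the argument using $z' := a + s \cdot e$ for a suitable positive contraction $e \in A$ chosen (via Dini's theorem on $T(A)$, when compact, or by working approximately otherwise) so that $\tau(e)$ is uniformly close to $1$ on $T(A)$. In any event, the core difficulty (and the only non-trivial content) lies in executing the Hahn--Banach separation; converting to the absolute-value formulation is a simple translation trick.
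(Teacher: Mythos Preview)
Your overall strategy --- Hahn--Banach separation followed by a translation to convert one-sided separation into a violation of the absolute-value hypothesis --- is exactly the paper's approach, and in the unital case your execution is cleaner: adding $s\cdot 1$ with $s>\|a\|$ immediately makes all traces of $z$ positive, and the contradiction is immediate.

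The gap is in the non-unital case. Your element $z=a+s\cdot 1$ lies in $\tilde A\setminus A$, so the hypothesis (which only quantifies over $z\in A$) does not apply to it; the sentence ``testing the hypothesis against elements of $A$ is not needed because the contradiction is between traces evaluated on $z$'' is self-defeating, since the contradiction \emph{is} the failure of the hypothesis at $z$. Your fallback $z'=a+se$ would work if $\inf_{\tau\in T(A)}\tau(e)$ were close to $1$, but that is essentially the compactness of $T(A)$, which is not assumed, and ``working approximately otherwise'' is not a proof.

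The paper avoids this by a sharper translation that stays inside $A$. Rather than shifting by a large near-unit, it adds a positive element $b\in A_+$ with $\|b\|$ only slightly bigger than $\alpha:=\sup_{\sigma\in T_0}|\sigma(a)|-\tau_0(a)\geq 0$ but with $\tau_0(b)>\alpha$; such $b$ exists simply because $\tau_0$ is a state (take any positive contraction with $\tau_0$-value close to $1$ and rescale). One then checks directly that $|\sigma(a+b)|\leq\sup_{\sigma'\in T_0}|\sigma'(a)|<\tau_0(a+b)$ for $\sigma\in T_0$. The point is that only the \emph{single} trace $\tau_0$ needs to see $b$ as large, not all of $T(A)$, so no uniform control (and hence no compactness) is required.
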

\begin{proof}
Suppose that this is not the case. Then there exists $\tau\in T(A)\setminus\overline{\mathrm{co}(T_0)}$. By Hahn--Banach, let $z_0=z_0^* \in A$ be such that $\sup_{\sigma\in T_0}\sigma(z_0)<\tau(z_0)$.
By hypothesis, 
\begin{equation} \alpha:=\sup_{\sigma\in T_0}|\sigma(z_0)|-\tau(z_0)\geq 0. \end{equation}
Choose $a\in A_+$ such that $\tau(a)>\alpha$ and $\|a\|<\alpha+(\tau(z_0)-\sup_{\sigma\in T_0}\sigma(z_0))$. Set $z:=z_0+a$, so that for $\rho\in T_0$,
\begin{align}
\notag
-&\sup_{\sigma\in T_0}|\sigma(z_0)|\leq\rho(z_0)\leq\rho(z)\leq\rho(z_0)+\|a\|\\
&<\rho(z_0)+\alpha+\tau(z_0)-\sup_{\sigma\in T_0}\sigma(z_0)
<\tau(z_0)+\alpha=\sup_{\sigma\in T_0}|\sigma(z_0)|.
\end{align}
Thus,
$\sup_{\sigma\in T_0}|\sigma(z)|\leq\sup_{\sigma\in T_0}|\sigma(z_0)|$,
and so
\begin{align}
\notag
\tau(z)=\tau(z_0)+\tau(a)&>\tau(z_0)+\sup_{\sigma\in T_0}|\sigma(z_0)|-\tau(z_0)\\
&=\sup_{\sigma\in T_0}|\sigma(z_0)|\geq\sup_{\sigma\in T_0}|\sigma(z)|,
\end{align}
contradicting the hypothesis.
\end{proof}

We now turn to the fibrewise statement for traces on relative commutants.  The key difference between this and the original statement of \cite[Lemma 3.21]{BBSTWW} is that we also need to consider elements which are uniformly small in trace, not just those which are zero in all traces.

\begin{lemma}[{cf.\ \cite[Lemma 3.21]{BBSTWW}}]
\label{lem:vnCommTraces}
Let $\mathcal M$ be a finite von Neumann algebra with faithful normal trace $\tau_{\mathcal M}$. Let $\mathcal B\subset \mathcal M$ be an injective von Neumann subalgebra with separable predual. 
Set $\mathcal N:=\mathcal M \cap \mathcal B'\cap (1_{\mathcal M}-1_{\mathcal B})^\perp$.
Define $T_0$ to be the set of all traces on $\mathcal N$ of the form $\tau(b\,\cdot)$ where $\tau \in T(\mathcal M)$ is a normal trace and $b \in \mathcal B_{+}$ has $\tau(b)=1$. Suppose that $\delta>0$ and $z \in \mathcal N$ is a contraction such that $|\rho(z)|\leq\delta$ for all $\rho \in T_0$. Set $K := 12 \cdot 12 \cdot (1+\delta)$.
Then for any finite subset $\mathcal F$ of $\mathcal B$ and $\epsilon > 0$, there exist contractions $w,x_1,\dots,x_{10},y_1,\dots,y_{10} \in \mathcal M\cap (1_{\mathcal M}-1_{\mathcal B})^\perp$, such that
$\|[w,a]\|_{2,\tau_{\mathcal M}}, \|[x_i,a]\|_{2,\tau_{\mathcal M}}, \|[y_i,a]\|_{2,\tau_{\mathcal M}} < \epsilon$ for all $a \in \mathcal F$, and
\begin{equation}
\label{eq:vnCommTraces10Comm}
\big\| z - \delta w - K\ssum_{i=1}^{10} [x_i,y_i]\big\|_{2,\tau_{\mathcal M}} < \epsilon.
\end{equation}

If $(\mathcal M,\tau_\mathcal M)$ is an ultraproduct of tracial von Neumann algebras,\footnote{\label{TVNAFT}This means that we have a sequence $(\mathcal M_n)_{n=1}^\infty$ of finite von Neumann algebras with a distinguished faithful trace $\tau_n \in T(\mathcal M_n)$ for each $n$, and $\mathcal M:=\prod^\omega \mathcal M_n$ is the tracial von Neumann ultrapower, i.e. the quotient of $\prod_{n=1}^\infty\mathcal M_n$ by $\{(x_n)_{n=1}^\infty \in\prod_{n=1}^\infty\mathcal M_n:\lim_{n\to\omega}\tau_n(x_n^*x_n)=0\}$.  Then $\mathcal M$ is a finite von Neumann algebra and carries the distinguished faithful normal trace $\tau_\mathcal M$ arising from the sequence $(\tau_n)_{n=1}^\infty$.} then one can take $w, x_1,\dots,x_{10},y_1,\dots,y_{10}\in\mathcal N$ and have the equality
\begin{equation}
\label{eq:vnCommTraces10Comm2}
 z = \delta w + K\ssum_{i=1}^{10} [x_i,y_i].
\end{equation}  
Moreover, in this case, the weak$^*$-closed convex hull of $T_0$ is $T(\mathcal N)$.
\end{lemma}

\begin{proof}
Throughout the proof, we will write $\|\cdot\|_2$ as shorthand for $\|\cdot\|_{2,\tau_{\mathcal M}}$.
Assume without loss of generality that $\mathcal F$ consists of contractions.
As $\mathcal B$ is hyperfinite (by Connes' theorem \cite{Co76}), we may find a finite dimensional subalgebra $B$ of $\mathcal B$ with $1_B=1_{\mathcal B}$ such that for all $a\in\mathcal{F}$  there exists $b \in B$ such that $\|a - b\|_2 < \tfrac{\epsilon}{2}$. We shall arrange that $w,x_i,y_i \in \mathcal M \cap B'\cap (1_{\mathcal M}-1_B)^\perp$; it will then follow that $\|[w,a]\|_2, \|[x_i,a]\|_2, \|[y_i,a]\|_2 < \epsilon$ for all $a \in \mathcal F$.

Let $p_1,\dots,p_l$ be the minimal central projections in $B$, so that $\mathcal M\cap B'\cap (1_{\mathcal M}-1_B)^\perp$ decomposes as the direct sum $\bigoplus_{k=1}^lp_k(\mathcal M\cap B')$.   Identify the centre of $\mathcal M$ with $L^\infty(Y,\mu)$, where $\mu$ induces $\tau_\mathcal M$ and let $E:\mathcal M\rightarrow L^\infty(Y,\mu)$ denote the centre-valued trace.  For each $k$, set $Y_k:=\{t\in Y:E(p_k)(t)> (\epsilon/l)^2\}$, and let $q_k:=p_k\chi_{Y_k}\in p_k(\mathcal M\cap B')$.  Note that $p_k-q_k$ is a projection satisfying $\|E(p_k-q_k)\|\leq (\epsilon/l)^2$, and hence $\|p_k-q_k\|_2\leq\epsilon/l$.  Set $z':=z\sum_{k=1}^lq_k$ so that 
\begin{equation} \|z'-z\|_2 \leq \ssum_{k=1}^l \|p_k-q_k\|_2 \leq \epsilon. \end{equation}

We claim that $|\sigma(z')|\leq \delta$ for every trace $\sigma \in T(\mathcal M \cap B'\cap (1_{\mathcal M}-1_B)^\perp)$.
By convexity and density, it suffices to prove this for a normal trace $\sigma$ that is concentrated in a single direct summand $p_k(\mathcal M\cap B')$. Since $p_kB$ is a unital matrix algebra in $p_k\mathcal M p_k$, we have
\begin{equation}
p_k\mathcal M p_k\cong p_k(\mathcal M\cap B')\otimes p_kB,
\end{equation}
and so such a $\sigma$ extends uniquely to a (necessarily normal) trace on $p_k\mathcal M p_k$, also denoted $\sigma$.
Since $E(q_k)$ is bounded away from $0$ on $Y_k$, the tracial functional $\sigma(q_k\,\cdot)$ on $q_k\mathcal M q_k$ extends to a bounded normal tracial functional on $\chi_{Y_k}\mathcal M$,\footnote{To see this, fix $m\in\mathbb N$ with $m>(l/\epsilon)^2$.  Working in the von Neumann algebra $\chi_{Y_k}\mathcal M\otimes M_m$, as the centre-valued trace determines the order on projections, we have $\chi_{Y_k}\otimes e_{11}\precsim q_k\otimes 1_m$. As $\sigma(q_k\,\cdot)$ certainly extends to $q_k\mathcal M q_k\otimes M_m$, this Murray--von Neumann subequivalence can be used to define the trace on $\chi_{Y_k}\mathcal M$.}
and then to a bounded normal tracial functional $\sigma'$ on $\mathcal M$. Note $\sigma'(p_kz)=\sigma'(p_k\chi_{Y_k}z)=\sigma(q_kz)$, and $\sigma'(p_k)=\sigma(q_k)$. Accordingly, using the hypothesis on $z$ for the first inequality, we have
\begin{align}
|\sigma(z')|=|\sigma(q_kz)|=|\sigma'(p_kz)|\leq\delta\sigma'(p_k)=\delta\sigma(q_k)\leq\delta,
\end{align}
establishing the claim.

Now, let $h \in Z\big(\mathcal M\cap B'\cap (1_{\mathcal M}-1_B)^\perp\big)$ be the image of $z'$ under the centre-valued trace on $\mathcal M \cap B'\cap(1_{\mathcal M}-1_B)^\perp$. By the claim, $\|h\|\leq \delta$, so $w := \delta^{-1} h$ is a contraction, and $z'-\delta w=z'-h$ vanishes under this centre-valued trace. Therefore \cite[Theorem 3.2]{FH80} provides contractions\footnote{This is why the constant $K$ appears. Note $\|z'-h\| \leq 1 + \delta$.} $x_{1},\dots,x_{10},y_{1},\dots,y_{10} \in \mathcal M \cap B'\cap(1_{\mathcal M}-1_B)^\perp$, such that
\begin{equation} z'-\delta w = K\ssum_{i=1}^{10} [x_{i},y_{i}]. \end{equation} 
Since $\|z-z'\|_2 < \epsilon$, this gives (\ref{eq:vnCommTraces10Comm}).

Now consider the case when $(\mathcal M,\tau_\mathcal M)$ is an ultraproduct of tracial von Neumann algebras as set out in Footnote \ref{TVNAFT} above.
In this case Kirchberg's $\epsilon$-test enables us to take $x_i,y_i\in\mathcal N$ and get the equality (\ref{eq:vnCommTraces10Comm2}), which in turn ensures that for any trace $\rho\in T(\mathcal N)$, we have $|\rho(z)|\leq\delta$.    Thus $\mathcal N$ (in place of $A$) and $T_0$ satisfy the hypothesis of Lemma \ref{HB}, and so the weak$^*$-closed convex hull of $T_0$ is $T(\mathcal N)$.
\end{proof}

We now use CPoU to obtain a version of the previous result for relative commutants in $B^\omega$.

\begin{proposition}[{cf.\ \cite[Proposition 3.22]{BBSTWW}}]\label{prop:CommTraces}
Let $B$ be a separable, unital $\mathrm{C}^*$-algebra with $T(B)$ non-empty, and with CPoU.
Let $A$ be a separable, unital, nuclear $\mathrm{C}^*$-algebra and $\phi:A\rightarrow B^\omega$ a $^*$-homomorphism.  Set $C:= B^\omega\cap\phi(A)'\cap(1_{B^\omega}-\phi(1_A))^\perp$. Define $T_0$ to be the set of all traces on $C$ of the form $\tau(\phi(a)\,\cdot)$ where $\tau \in T(B^\omega)$ and $a \in A_+$ satisfies $\tau(\phi(a))=1$.

Suppose $\delta>0$ and $z \in C$ is a contraction satisfying $|\rho(z)|\leq \delta$ for all $\rho\in T_0$. Set $K := 12 \cdot 12 \cdot (1+\delta)$. Then there exist contractions $w, x_1,\dots,x_{10},y_1,\dots,y_{10} \in C$, such that
\begin{equation}\label{T3.19:3.37}
  z = \delta w + K\ssum_{i=1}^{10} [x_i,y_i].
\end{equation}
In particular, $T(C)$ is the closed convex hull of $T_0$.
\end{proposition}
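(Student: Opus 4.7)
The plan is to apply Lemma \ref{lem:vnCommTraces} fibrewise over traces of $B^\omega$, glue the resulting approximate fibrewise solutions into an exact global solution via Lemma \ref{lem:RealizingTypes}, and then deduce the Hahn--Banach conclusion from the equation \eqref{T3.19:3.37} using Lemma \ref{HB}, following the template of the $\mathrm{W}^*$-bundle argument in \cite[Proposition 3.22]{BBSTWW}.

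For the fibrewise step, fix $\tau \in \overline{T_\omega(B)}$ and let $\pi_\tau:B^\omega\to B(H_\tau)$ be the GNS representation, giving the finite von Neumann algebra $\mathcal{M}_\tau := \pi_\tau(B^\omega)''$ with induced faithful normal trace $\bar\tau$. Set $\mathcal{B}_\tau := \pi_\tau(\phi(A))''$; this is injective by nuclearity of $A$ and Connes' theorem, and has separable predual because $\pi_\tau(\phi(A))$ is norm-separable. Put $\mathcal{N}_\tau := \mathcal{M}_\tau\cap\mathcal{B}_\tau'\cap(1_{\mathcal{M}_\tau}-1_{\mathcal{B}_\tau})^\perp$. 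The main technical subtlety is to verify the hypothesis of Lemma \ref{lem:vnCommTraces} for $\pi_\tau(z)\in\mathcal{N}_\tau$: that $|\tau'(b\pi_\tau(z))|\leq\delta$ for every normal trace $\tau'$ on $\mathcal{M}_\tau$ and every $b\in\mathcal{B}_{\tau,+}$ with $\tau'(b) = 1$. The given hypothesis only provides control over $T_0$-traces, which involve positive elements of $\phi(A)$ together with a single ambient trace on $B^\omega$, so a Kaplansky density argument is required. Applied to the $\mathrm{C}^*$-subalgebra $\pi_\tau(\phi(A)) \subset \mathcal{M}_\tau$, Kaplansky density produces $a_n\in A_+$ with $\|\phi(a_n)\|\leq\|b\|$ and $\pi_\tau(\phi(a_n))\to b$ in $\|\cdot\|_{2,\tau'}$-norm. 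Writing $\sigma := \tau'\circ\pi_\tau\in T(B^\omega)$, one has $\sigma(\phi(a_n))\to 1$, so the normalised functionals $\rho_n := \sigma(\phi(a_n)\,\cdot\,)/\sigma(\phi(a_n))$ lie in $T_0$ for large $n$; their traciality on $C$ follows from traciality of $\sigma$ on $B^\omega$ combined with $\phi(a_n)\in\phi(A)\subseteq C'$. Since $\rho_n(z)\to\tau'(b\pi_\tau(z))$, the bound $|\rho_n(z)|\leq\delta$ passes to the limit. Lemma \ref{lem:vnCommTraces} now provides, for each finite $\mathcal{F}\subset\mathcal{B}_\tau$ and $\epsilon>0$, contractions $w^\tau,x_1^\tau,\dots,x_{10}^\tau,y_1^\tau,\dots,y_{10}^\tau\in\mathcal{M}_\tau\cap(1_{\mathcal{M}_\tau}-1_{\mathcal{B}_\tau})^\perp$ approximately $\mathcal{F}$-commuting and satisfying \eqref{eq:vnCommTraces10Comm}.

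For the global step, choose a countable dense sequence $(c_n)_{n=1}^\infty$ in $\phi(A)$ containing $\phi(1_A)$, and apply Lemma \ref{lem:RealizingTypes} with constants $z,c_1,c_2,\ldots$, twenty-one variables $w,x_1,\dots,x_{10},y_1,\dots,y_{10}$, and $^*$-polynomials enumerating the commutators $[v,c_j]$ for each variable $v$ and each $j$, the orthogonality expressions $v - \phi(1_A)v\phi(1_A)$ for each variable $v$, and the main polynomial $z - \delta w - K\sum_{i=1}^{10}[x_i,y_i]$. The fibrewise solutions above verify the hypothesis of Lemma \ref{lem:RealizingTypes} (taking $\mathcal{F}\supseteq\{\pi_\tau(c_1),\dots,\pi_\tau(c_k)\}$ at stage $k$), yielding exact contractions $w,x_1,\dots,x_{10},y_1,\dots,y_{10}\in B^\omega$ satisfying all the polynomial identities. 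Density of $(c_n)$ in $\phi(A)$ promotes commutation to all of $\phi(A)$, which combined with the orthogonality identities places the elements in $C$, and the main identity gives \eqref{T3.19:3.37}. For the final assertion, apply this equation to each $z'\in C^1$ with $\delta$ replaced by $\delta' := \sup_{\rho\in T_0}|\rho(z')|$: since traces on $C$ vanish on commutators, every $\rho'\in T(C)$ satisfies $|\rho'(z')|\leq\delta'$, so $\sup_{\rho'\in T(C)}|\rho'(z')|=\sup_{\rho\in T_0}|\rho(z')|$ for all $z'\in C$, and Lemma \ref{HB} then yields that $T(C)$ is the weak$^*$-closed convex hull of $T_0$.
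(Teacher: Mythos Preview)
Your proof is correct and follows essentially the same approach as the paper: verify the hypothesis of Lemma \ref{lem:vnCommTraces} fibrewise via a density argument (you use Kaplansky density in $\|\cdot\|_{2,\tau'}$; the paper uses weak$^*$ convergence, which is marginally cleaner), then globalise via Lemma \ref{lem:RealizingTypes} and conclude with Lemma \ref{HB}. Two small imprecisions worth noting: Kaplansky density only gives $\|\pi_\tau(\phi(a_n))\|\leq\|b\|$ rather than $\|\phi(a_n)\|\leq\|b\|$ (harmless, since the bound is never used), and in the final step your $\delta'$ may vanish, so strictly one should apply the first part with $\delta'+\epsilon$ and let $\epsilon\to 0$.
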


\begin{proof}
Fix $z$ and $\delta$ as in the proposition, and a dense sequence $(a_j)_{j=1}^\infty$ in $A$. We will use Lemma \ref{lem:RealizingTypes} to find contractions $w, x_1,\dots,x_{10},y_1,\dots,y_{10}$ in $B^\omega$ satisfying the following conditions 
\begin{align}
\notag	 z - \delta w - K\ssum_{i=1}^{10} [x_i, y_i] &= 0, \\
\notag	 s(1_{B^\omega} - \phi(1_A)) &= 0, \quad s \in \{x_1,\dots,x_{10},y_1,\dots,y_{10},w\}, \\
	[s,\phi(a_j)] &= 0, \quad j \in \mathbb{N},\ s \in \{x_1,\dots,x_{10},y_1,\dots,y_{10},w\},\label{prop:CommTraces.neweq}
\end{align}
which we can encode by a countable sequence of non-commutative polynomials $h_m$ for the purpose of verifying the hypotheses of Lemma \ref{lem:RealizingTypes}. 

Let $\tau \in \overline{T_\omega(B)}$, $\epsilon > 0$ and $k \in \mathbb{N}$. Set $\mathcal M_\tau := \pi_\tau(B^\omega)''$, $\phi_\tau := \pi_\tau \circ \phi:A \rightarrow \mathcal M_\tau$ and  $\mathcal{B} := \phi_\tau(A)''$, where $\pi_\tau$ denotes the GNS representation associated to $\tau$.
 As $A$ is nuclear, $\mathcal B$ is injective.

Given a normal trace $\rho\in T(\mathcal M_\tau)$, and $b\in \mathcal B_+$ with $\rho(b)=1$, find a net $(a_i)_i$ in $A_+$ with $\phi_\tau(a_i)\rightarrow b$ weak$^*$. Then $\rho(\phi_\tau(a_i))\rightarrow 1$, so defining $\tilde{a}_i:=a_i/\rho(\phi_\tau(a_i))$, we have $\phi_\tau(\tilde{a}_i)\rightarrow b$ weak$^*$ and $\rho(\phi_\tau(\tilde{a}_i))=1$ for all $i$.  By hypothesis $|\rho(\phi_\tau(\tilde{a}_i)\pi_\tau(z))|\leq \delta$ for all $i$, so that $|\rho(b\pi_\tau(z))|\leq\delta$.  Lemma \ref{lem:vnCommTraces}, then gives contractions $w^{\tau},x_1^{\tau},\dots,x_{10}^{\tau},y_1^{\tau},\dots,y_{10}^{\tau} \in \mathcal M_\tau \cap (1_{{\mathcal M}_\tau}-1_{\mathcal B})^\perp$ such that 
\begin{equation}
\big\| \pi_\tau(z) - \delta w^{\tau} - K\ssum_{l=1}^{10} [x_l^{\tau},y_l^{\tau}]\big\|_{2,\tau} < \epsilon,
\end{equation}
and $\|[w^\tau,\phi_\tau(a_j)]\|_{2,\tau}, \|[x^\tau_l,\phi_\tau(a_j)]\|_{2,\tau}, \|[y^\tau_l\phi_\tau(a_j)]\|_{2,\tau} < \epsilon$ for all $1 \leq j \leq k$ and $1\leq l\leq 10$. 
Hence the hypotheses of Lemma \ref{lem:RealizingTypes} are satisfied, i.e., we can approximately satisfy finitely many of the conditions from \eqref{prop:CommTraces.neweq} in each trace.  Therefore by Lemma \ref{lem:RealizingTypes}, we can produce the contractions  satisfying \eqref{prop:CommTraces.neweq}, thus establishing (\ref{T3.19:3.37}). 

Since $w$ is a contraction, we have $|\tau(z)| \leq \delta$ for all $\tau \in T(C)$. Hence, the closure of $T_0$ is $T(C)$ by Lemma \ref{HB}.
\end{proof}

With the above results in place, we can now use property (SI) as set up in \cite[Section 4.1]{BBSTWW} (without any tracial boundary assumption) and central surjectivity to lift the structural results for relative commutants in $B^\omega$ back to the $\mathrm{C}^*$-level.
 In particular, using the results above in place of the corresponding results from \cite{BBSTWW}, the proofs in \cite{BBSTWW} give the following omnibus lemma (which was shown in \cite{BBSTWW} in the compact boundary case).  As in \cite{BBSTWW}, from this point on we need the hypotheses that $B$ is simple, and all quasitraces on $B$ are traces (written $QT(B)=T(B)$, and famously automatic when $B$ is exact by Haagerup's work \cite{Ha14}).

\begin{lemma}[{cf.\ \cite[Theorem 4.1]{BBSTWW}}]\label{lem:Omnibus}
Let $B$ be a separable, simple, unital, $\mathcal Z$-stable $\mathrm{C}^*$-algebra with $QT(B)=T(B)\neq \emptyset$ and with CPoU.
Let $A$ be a separable, unital, nuclear $\mathrm{C}^*$-algebra and $\pi:A\rightarrow B_\omega$ a c.p.c.\ order zero map such that $\pi(a)$ is full for each nonzero $a\in A$, and which induces a $^*$-homomorphism $\bar{\pi}:A\rightarrow B_\omega/J_{B}=B^\omega$.  Set
\begin{equation}\label{eq:DefC}
C:=B_\omega\cap \pi(A)'\cap (1_{B_\omega}-\pi(1_A))^\perp,\quad \bar{C}:=C/(C\cap J_{B}).
\end{equation}
Then
\begin{enumerate}
\item[\rm{(i)}] all traces on $C$ factor through $\bar{C}$,
\item[\rm{(ii)}] $C$ has strict comparison of positive elements by traces, and
\item[\rm{(iii)}] the traces on $C$ are the closed convex hull of traces of the form $\tau(\pi(a)\,\cdot)$ for $\tau\in T(B_\omega)$ and $a\in A_+$ with $\tau(\pi(a))=1$.
\end{enumerate}
\end{lemma}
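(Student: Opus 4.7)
The plan is to mirror the proof strategy of [BBSTWW, Theorem 4.1]: first establish the analogous structural results at the uniform tracial ultrapower level (for $\bar C$), then transfer them down to $C$ by combining central surjectivity (Lemma \ref{lem:CentralSurjection}) with Matui--Sato's property (SI). The only new ingredient relative to [BBSTWW] is that the CPoU-based results of this section, namely Lemma \ref{lem:StrictClosureStrictComp} and Proposition \ref{prop:CommTraces}, will play the role previously played by the W*-bundle arguments valid for Bauer simplices.

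First, I would work at the $B^\omega$-level. By central surjectivity, the quotient map $C \to \bar C$ is surjective and $\bar C$ is canonically identified with $\bar\pi(1_A)\bigl(B^\omega \cap \bar\pi(A)'\bigr)$, since $\bar\pi(1_A)$ is a central projection in $B^\omega\cap\bar\pi(A)'$. Applying Lemma \ref{lem:StrictClosureStrictComp} with $S=\bar\pi(A)$ and $p=\bar\pi(1_A)$ gives strict comparison of positive elements by bounded traces on $\bar C$. Applying Proposition \ref{prop:CommTraces} to $\bar\pi$ identifies $T(\bar C)$ as the weak$^*$-closed convex hull of the traces of the form $\tau(\bar\pi(a)\,\cdot)$ for $\tau\in T(B^\omega)$ and $a\in A_+$ with $\tau(\bar\pi(a))=1$. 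This handles all three conclusions at the $\bar C$-level.

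Next, I would lift these statements to $C$ using property (SI). Property (SI) is available in our setting: $B$ is simple, unital, $\mathcal Z$-stable with $QT(B)=T(B)$, which is exactly the regime where Matui--Sato's arguments apply. The fullness assumption on $\pi$ guarantees that $\pi(A)$ is large enough in $B_\omega$ for a relative version of (SI) to hold inside $C$, saying that any positive element of $C\cap J_B$ is Cuntz-dominated in $C$ by every positive element of $C$ whose image in $\bar C$ has a uniform tracial lower bound. Conclusion (i) is then immediate: an element of $C\cap J_B$ is Cuntz-below arbitrarily small elements of $C$, so every tracial state on $C$ annihilates it. Conclusion (ii) follows by combining strict comparison in $\bar C$ (Step~1) with (i) to pass assumptions on traces between $C$ and $\bar C$, and then applying the (SI)-based relative comparison to upgrade $\bar a \precsim \bar b$ in $\bar C$ to $a \precsim b$ in $C$. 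Conclusion (iii) is then immediate from (i) and the description of $T(\bar C)$ supplied by Proposition \ref{prop:CommTraces}.

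The main obstacle is the application of (SI) to transfer strict comparison and the tracial description from $\bar C$ to $C$, since (SI) must be invoked \emph{inside} the relative commutant $C$ rather than in $B_\omega$ itself; this forces the relevant approximately central elements to commute with $\pi(A)$ and be supported under $\pi(1_A)$. This is precisely the technical content of the corresponding steps in [BBSTWW, Section 4], where fullness of $\pi$ is used to check the hypotheses of (SI) for relevant witnesses inside $C$; our situation is no different, so those arguments carry over verbatim once the $\bar C$-level inputs have been established via CPoU.
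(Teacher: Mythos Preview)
Your proposal is correct and follows essentially the same approach as the paper: establish the structural results for $\bar C$ using the CPoU-based Lemma~\ref{lem:StrictClosureStrictComp} and Proposition~\ref{prop:CommTraces} in place of the $\mathrm{W}^*$-bundle arguments from \cite{BBSTWW}, then transfer to $C$ via central surjectivity and property (SI) exactly as in \cite[Section~4]{BBSTWW}. The only explicit substitution the paper records that you did not mention is using Proposition~\ref{prop:GammaZ} (rather than \cite[Remark~3.13]{BBSTWW}) to supply the unital embedding $M_k \hookrightarrow B^\omega \cap \bar\pi(A)' \cap \{\bar c\}'$ needed in the strict comparison argument, but this falls under your ``carry over verbatim'' clause.
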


\begin{proof}
(i) is just a repeat of \cite[Theorem 4.1(i)]{BBSTWW}, and CPoU is not required.

For (ii), we use the proof from \cite{BBSTWW} with the following small modifications.
In place of the paragraph following Eq.\ (4.57) in \cite{BBSTWW}, use Lemma \ref{lem:StrictClosureStrictComp} 
to obtain strict comparison for $\bar{C}$;
the embedding of $M_k$ into $B^\omega\cap \bar{\pi}(A)'\cap\{\bar{c}\}'$ in Eq.\ (4.58) of \cite{BBSTWW} is obtained from Proposition \ref{prop:GammaZ} in place of \cite[Remark 3.13]{BBSTWW});
and Lemma \ref{lem:CentralSurjection} is used in place of \cite[Lemma 3.10]{BBSTWW} to justify Eq.\ (4.59) of \cite{BBSTWW}.

Finally, (iii) follows from (i), Lemma \ref{lem:CentralSurjection}, and Proposition \ref{prop:CommTraces}.
\end{proof}

We now have all the tools in place to obtain the main classification lemma for order zero maps required to obtain our nuclear dimension estimates.  The following removes the tracial boundary hypothesis from \cite[Theorem 5.5]{BBSTWW} (though for simplicity we stick to a single algebra $B$ rather than the sequence of algebras $(B_n)_{n=1}^\infty$ given in \cite{BBSTWW}).  

\begin{lemma}[{cf.\ \cite[Theorem 5.5]{BBSTWW}}]
\label{lem:ClassMaps}
Let $B$ be a separable, simple, unital, $\mathcal Z$-stable $\mathrm{C}^*$-algebra with $QT(B)=T(B)\neq \emptyset$ and with CPoU.
Let $A$ be a separable, unital, nuclear $\mathrm{C}^*$-algebra, let $\phi_1:A\rightarrow B_\omega$ be a $^*$-homomorphism such that $\phi_1(a)$ is full in $B_\omega$ for each nonzero $a\in A$, and let $\phi_2:A\rightarrow B_\omega$ a c.p.c.\ order zero map such that
\begin{equation}
\label{eq:ClassMapsTrace}
\tau\circ\phi_1=\tau\circ\phi_2^m,\quad \tau\in T(B_\omega),\ m\in\mathbb N.\footnote{Here $\phi_2^m$ denotes the order zero functional calculus, cf. \cite[Corollary 4.2]{WZ09}.}
\end{equation}
Let $k\in \mathcal Z_+^1$ have spectrum $[0,1]$, and define c.p.c.\ order zero maps $\psi_i:A\rightarrow (B\otimes\mathcal Z)_\omega$ by $\psi_i(\cdot):=\phi_i(\cdot)\otimes k$.  Then $\psi_1$ and $\psi_2$ are unitarily equivalent in $(B\otimes\mathcal Z)_\omega$.
\end{lemma}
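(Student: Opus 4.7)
My plan is to adapt the proof of \cite[Theorem 5.5]{BBSTWW}, using Lemma \ref{lem:Omnibus} as a replacement for the $\mathrm{W}^*$-bundle-level input available there under the Bauer simplex hypothesis.  Via the order zero functional calculus of \cite{WZ09}, the desired unitary equivalence is equivalent to that of the supporting ${}^*$-homomorphisms $\Psi_i : CA \to (B \otimes \mathcal Z)_\omega$ of $\psi_i$, where $CA := C_0((0,1]) \otimes A$.  Because $k$ has spectrum $[0,1]$ and $\mathcal Z$ has a unique trace, the hypothesis \eqref{eq:ClassMapsTrace} upgrades to $\tau \circ \Psi_1 = \tau \circ \Psi_2$ on all of $CA$ for every limit trace $\tau$ on $(B \otimes \mathcal Z)_\omega$: on monomials $t^m \otimes a$ this is precisely the assumption (using $\phi_1^m = \phi_1$ for the ${}^*$-homomorphism $\phi_1$), and Stone--Weierstrass then extends it to all of $CA$.

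Next I would form the c.p.c.\ order zero map $\pi : A \to M_2((B \otimes \mathcal Z)_\omega)$ defined by $\pi(a) := \mathrm{diag}(\psi_1(a), \psi_2(a))$.  Fullness of $\phi_1(a)$ in $B_\omega$ for each nonzero $a \in A$, together with simplicity of $B \otimes \mathcal Z$, makes each $\pi(a)$ full in $M_2((B \otimes \mathcal Z)_\omega)$.  Since CPoU passes to $M_2(B \otimes \mathcal Z)$ by Lemma \ref{CPOU:Matrix}, Lemma \ref{lem:Omnibus} applies to $\pi$: the relative commutant
\begin{equation}
C := M_2((B \otimes \mathcal Z)_\omega) \cap \pi(A)' \cap (1 - \pi(1_A))^\perp
\end{equation}
has strict comparison by traces, and every trace on $C$ is a weak${}^*$-limit of traces of the form $\tau(\pi(a)\,\cdot)$ with $\tau(\pi(a)) = 1$.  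Combining this trace description with the tracial agreement from the first step, the two ``corner'' elements $\psi_1(1_A) \otimes e_{11}$ and $\psi_2(1_A) \otimes e_{22}$ in $C$ induce identical tracial functionals on $C$.  Strict comparison then yields a partial isometry $w \in C$ exchanging these corners and conjugating the direct summand $\Psi_1 \oplus 0$ into $0 \oplus \Psi_2$; reading off its off-diagonal entry produces $v \in (B \otimes \mathcal Z)_\omega$ satisfying $v\,\psi_1(a)\,v^* = \psi_2(a)$ for all $a \in A$, with $v^*v$ and $vv^*$ the support projections of $\psi_1(1_A)$ and $\psi_2(1_A)$ in the uniform tracial closure.

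Finally, to upgrade $v$ to a genuine unitary $u \in (B \otimes \mathcal Z)_\omega$ with $u\,\psi_1(a)\,u^* = \psi_2(a)$, I would exploit that the full spectrum $[0,1]$ of $k$ makes $\phi_i(1_A)\otimes(1-k)$ a positive contraction orthogonal to $\psi_i(1_A) = \phi_i(1_A)\otimes k$, providing ample room in the complement to extend $v$ via a standard partial-isometry-to-unitary argument in ultrapowers.  The main obstacle I foresee is the production of the partial isometry $w$ inside $C$: this is the technical heart of the argument, and is where CPoU enters decisively through Lemma \ref{lem:Omnibus}, whose proof in turn rests on Proposition \ref{prop:CommTraces}, Lemma \ref{lem:StrictClosureStrictComp}, central surjectivity (Lemma \ref{lem:CentralSurjection}), and property (SI); together these replace the $\mathrm{W}^*$-bundle tools used in \cite{BBSTWW}, which were only available under the Bauer simplex assumption.
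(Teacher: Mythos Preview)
Your proposal is correct and takes essentially the same route as the paper: both follow the proof of \cite[Theorem 5.5]{BBSTWW} verbatim, substituting Lemma~\ref{lem:Omnibus} for \cite[Theorem 4.1]{BBSTWW}, and both note that CPoU passes to the $2\times 2$ amplification via Lemma~\ref{CPOU:Matrix} so that Lemma~\ref{lem:Omnibus} applies to the relevant relative commutant in $M_2(B_\omega)\cong (M_2(B))_\omega$.

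One point worth sharpening: the step you flag as ``the main obstacle''---going from strict comparison and agreement of the corner tracial functionals to an element conjugating $\psi_1$ to $\psi_2$---is not a direct consequence of strict comparison (the corner elements $\psi_i(1_A)\otimes e_{ii}$ are positive contractions, not projections, so strict comparison only gives Cuntz equivalence, not an intertwining partial isometry). The paper handles this by invoking \cite[Theorem 5.1]{BBSTWW}, which is precisely engineered to produce the required intertwiner from the structural input (strict comparison and the trace description) that Lemma~\ref{lem:Omnibus} supplies. Your instinct that this is where the technical work lies is correct; the paper simply names the black box rather than reproving it.
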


\begin{proof}
The proof of \cite[Theorem 5.5]{BBSTWW} works verbatim to give Lemma \ref{lem:ClassMaps}, using Lemma \ref{lem:Omnibus} in place of \cite[Theorem 4.1]{BBSTWW}.  Note that the key technical result (\cite[Theorem 5.1]{BBSTWW}) used in the argument is set up with the hypothesis that the algebra $C$ of \cite[Eq.\ (5.1)]{BBSTWW} has strict comparison of positive elements by traces. When we apply this, $C$ is a relative commutant of $M_2(B_\omega)\cong (M_2(B))_\omega$ but otherwise of the form (\ref{eq:DefC}).  Since $M_2(B)$ inherits CPoU from $B$ by Lemma \ref{CPOU:Matrix}, the hypotheses of Lemma \ref{lem:Omnibus} hold. So \cite[Theorem 5.1]{BBSTWW} can be applied just as in \cite{BBSTWW}.
\end{proof}

We end this section by noting that we can remove the tracial boundary hypothesis from the $2$-coloured classification result \cite[Corollary 6.5]{BBSTWW}.\footnote{The tracial boundary hypothesis can also be removed from \cite[Theorem 6.2]{BBSTWW}; for conciseness we do not state this here.}  Recall from \cite[Definition 6.1]{BBSTWW} that unital $^*$-homomorphisms $\phi,\psi:A\rightarrow B$ are said to be approximately $n$-coloured equivalent if there exists $w^{(0)},\dots,w^{(n-1)}\in B_\omega$ such that $w^{(i)}{}^*w^{(i)}\in B_\omega\cap \psi(A)'$ and $w^{(i)}w^{(i)}{}^*\in B_\omega\cap \phi(A)'$ for each $i=0,\dots,n-1$ and
\begin{equation}
\phi(a)=\sum_{i=0}^{n-1}w^{(i})\psi(a)w^{(i)}{}^*,\ \psi(a)=\sum_{i=0}^{n-1}w^{(i)}{}^*\phi(a)w^{(i)},\quad a\in A.
\end{equation}
Recall too, that under the hypotheses of the theorem which follows, $B$ has CPoU when it is nuclear by Theorem \ref{thm:mainCPoU}.  The proofs from \cite[Section 6]{BBSTWW} work verbatim for Theorem \ref{colouredthm}, using Lemma \ref{lem:ClassMaps} in place of \cite[Theorem 5.5]{BBSTWW}.

\begin{theorem}[{cf.\ \cite[Corollary 6.5]{BBSTWW}}]\label{colouredthm}
Let $A$ be a separable, unital, nuclear $\mathrm{C}^*$-algebra, and let $B$ be a separable, simple, unital, $\mathcal Z$-stable $\mathrm{C}^*$-algebra such that $QT(B)=T(B)\neq \emptyset$, and with CPoU.
Let $\phi_1,\phi_2:A\rightarrow B$ be unital $^*$-homomorphisms such that $\phi_1$ is injective. Then the following are equivalent:
\begin{enumerate}
\item[\rm{(i)}] $\tau\circ\phi_1=\tau\circ\phi_2$ for all $\tau\in T(B)$;
\item[\rm{(ii)}] $\phi_1$ and $\phi_2$ are approximately $n$-coloured equivalent for some $n\in\mathbb N$ with $n\geq 2$;
\item[\rm{(iii)}] $\phi_1$ and $\phi_2$ are approximately $2$-coloured equivalent.
\end{enumerate}
\end{theorem}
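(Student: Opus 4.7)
The plan is to follow the line of argument used for \cite[Corollary 6.5]{BBSTWW}, now powered by Lemma \ref{lem:ClassMaps}, which provides the classification of order zero maps in trace without any tracial boundary assumption.

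The implication (iii)$\Rightarrow$(ii) is immediate from the definitions. For (ii)$\Rightarrow$(i), the approximate $n$-coloured equivalence produces, for each tolerance, unitaries $u_1,\dots,u_n\in B$ together with pairwise orthogonal positive contractions summing to the unit so that $\phi_2(a)$ is close to $\sum_{i=1}^n u_i\phi_1(a)u_i^*$ times the corresponding contractions; passing to a trace on $B$ (which is automatically a trace on the $u_i\phi_1(\cdot)u_i^*$) and using that these are genuine conjugates of $\phi_1$, one recovers $\tau\circ\phi_2=\tau\circ\phi_1$ up to the chosen tolerance, hence identity.

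The real work is (i)$\Rightarrow$(iii). I would fix a positive contraction $k\in\mathcal Z$ with full spectrum $[0,1]$ and, using $B\cong B\otimes \mathcal Z$, form the c.p.c.\ order zero maps
\begin{equation}
\psi_i:A\to (B\otimes\mathcal Z)_\omega,\qquad \psi_i(a):=\phi_i(a)\otimes k,\quad i=1,2.
\end{equation}
Since $B$ is simple and $\phi_1$ is a unital $^*$-homomorphism, $\phi_1(a)$ is full in $B$ for every nonzero $a\in A$. Hypothesis (i) together with the functional calculus for order zero maps (applied to the factor $k$ and using $QT(B)=T(B)$) yields $\tau\circ\phi_1=\tau\circ\phi_2^m$ on all limit traces for every $m\in\mathbb N$, so the tracial compatibility required by Lemma \ref{lem:ClassMaps} is in force. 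Applying Lemma \ref{lem:ClassMaps} produces a unitary $u\in (B\otimes\mathcal Z)_\omega$ with $u\psi_1(\cdot)u^*=\psi_2(\cdot)$.

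Finally, writing $1_{\mathcal Z}=k+(1_{\mathcal Z}-k)$ and applying the same argument to $1_{\mathcal Z}-k$ in place of $k$ (its spectrum is also $[0,1]$), one obtains a second unitary $v\in(B\otimes\mathcal Z)_\omega$ intertwining $\phi_i(\cdot)\otimes(1_{\mathcal Z}-k)$. Reindexing via a standard $\mathcal Z$-absorption identification $B\cong B\otimes\mathcal Z$ and lifting the two unitaries $u,v$ to sequences in $B$, the decomposition $\phi_i(\cdot)\otimes 1_{\mathcal Z}=\phi_i(\cdot)\otimes k+\phi_i(\cdot)\otimes(1_{\mathcal Z}-k)$ translates into the required approximate $2$-coloured equivalence of $\phi_1$ and $\phi_2$. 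The main obstacle--removing the Bauer simplex assumption from the classification of order zero maps in trace--has already been overcome by CPoU through Lemma \ref{lem:ClassMaps}, so the rest of the argument is the verbatim geometric manipulation from \cite[Section~6]{BBSTWW}.
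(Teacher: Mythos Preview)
Your proposal is correct and follows essentially the same approach as the paper, which simply states that the proofs from \cite{BBSTWW} work verbatim using Lemma~\ref{lem:ClassMaps} in place of \cite[Theorem~5.5]{BBSTWW}. One small point: your justification of the tracial compatibility \eqref{eq:ClassMapsTrace} is slightly muddled---since $\phi_2$ is already a unital $^*$-homomorphism, the order zero functional calculus gives $\phi_2^m=\phi_2$ for all $m$, so hypothesis~(i) yields \eqref{eq:ClassMapsTrace} directly without any appeal to the factor $k$ or to $QT(B)=T(B)$.
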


\section{Nuclear dimension}\label{S5}

\noindent
The final ingredient we need to prove Theorem \ref{thm:MainThm1} and its consequences, is the existence result (i) discussed in the outline `From $\mathcal Z$-stability to finite nuclear dimension' 
in the introduction.  The corresponding result with a compact tracial boundary assumption is \cite[Lemma 7.4]{BBSTWW}. In order to use CPoU to extend this result to general trace simplices, we first handle the case of a single trace using a strategy from \cite{BCW16}.

\begin{lemma}
	\label{lem:OneTraceMap}
	Let $A$ be a separable, unital, nuclear $\mathrm{C}^*$-algebra.
	Let $\mathcal F \subset A$ be a finite set, let $\epsilon>0$, and let $\tau \in T(A)$.
	Then there exist a finite dimensional $\mathrm{C}^*$-algebra $F$, a c.p.c.\ map $\theta:A \to F$, and a c.p.c.\ order zero map $\eta:F \to A$ such that
	\begin{align}
	\label{eq:OneTraceMap1}
	\|\theta(x)\theta(y)\| &< \epsilon \quad && \text{for }x,y \in \mathcal{F} \text{ satisfying }xy=0\text{, and} \\ 
	\label{eq:OneTraceMap2}
	\|\eta \circ \theta(x)-x\|_{2,\tau} &< \epsilon &&\text{for }x \in \mathcal{F}.
	\end{align}
	If all traces on $A$ are quasidiagonal, then in place of \eqref{eq:OneTraceMap1} we may arrange that
	\begin{equation}
	\label{eq:OneTraceMap3} \|\theta(x)\theta(y)-\theta(xy)\| < \epsilon \quad \text{for }x,y \in \mathcal{F}. \end{equation}
\end{lemma}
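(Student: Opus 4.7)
My plan is to follow the strategy of \cite{BCW16}, exploiting the fact that the finite von Neumann algebra $M:=\pi_\tau(A)''$ obtained from the GNS representation at $\tau$ is hyperfinite; since $A$ is nuclear, $M$ is injective, and Connes' theorem applies.

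I would first apply (the finitary counterpart of) Lemma \ref{lem:BCW} to $\mathcal F$ with a small tolerance $\epsilon'>0$ to be chosen at the end. This produces a finite-dimensional $\mathrm{C}^*$-algebra $F$, a c.p.c.\ map $\psi:A\to F$ that is approximately order zero on $\mathcal F$ (which takes care of \eqref{eq:OneTraceMap1} once $\epsilon'$ is small enough), and a c.p.\ map $\phi:F\to A$ of the form $\phi=\sum_{i=1}^n\lambda_i\eta_i$, where each $\eta_i:F\to A$ is c.p.c.\ order zero, $\lambda_i\geq 0$ with $\sum_i\lambda_i=1$, and $\|\phi\circ\psi(x)-x\|<\epsilon'$ for $x\in\mathcal F$. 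I would then set $\theta:=\psi$.

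The main task will be to convert the convex combination $\phi$ into a single c.p.c.\ order zero map $\eta:F\to A$ at the cost of control only in $\|\cdot\|_{2,\tau}$; this is where the single-trace hypothesis pays off. Since $M$ is hyperfinite, its tracial ultrapower $M^\omega$ has a rich central sequence algebra: after disintegrating $\tau$ into extremal components over the centre of $M$ and using Dixmier's theorem (which places a copy of the hyperfinite II$_1$ factor inside the central sequence algebra of any hyperfinite II$_1$ factor), one locates pairwise orthogonal projections $p_1,\dots,p_n\in M^\omega\cap M'$ satisfying $\tau_\omega(p_ia)=\lambda_i\tau(a)$ for every $a\in M$, where $\tau_\omega$ is the induced trace on $M^\omega$. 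Defining
\begin{equation*}
\tilde\eta:F\to M^\omega,\qquad \tilde\eta(x):=\ssum_{i=1}^n p_i\,\pi_\tau(\eta_i(x)),
\end{equation*}
a direct calculation using the orthogonality of the $p_i$ and the order zero property of each $\eta_i$ shows that $\tilde\eta$ is c.p.c.\ order zero, while the trace condition on the $p_i$ gives $\tau_\omega(\tilde\eta(x)\pi_\tau(y))=\tau(\phi(x)y)$ for $x\in F$, $y\in A$.

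The final (and hardest) step is to realise $\tilde\eta$ by an honest c.p.c.\ order zero map $\eta:F\to A$ satisfying $\|\pi_\tau(\eta(x))-\tilde\eta(x)\|_{2,\tau}<\epsilon/2$ for $x$ in a basis of $F$. Using the structure theorem for c.p.c.\ order zero maps out of finite dimensional algebras from \cite{WZ09}, this reduces to lifting a positive contraction in $M^\omega$ together with a compatible system of matrix units, which can be handled via Kaplansky density for $M$ in $\|\cdot\|_{2,\tau}$ and a routine perturbation argument for finite dimensional subalgebras. Combining this with $\|\phi\circ\psi(x)-x\|<\epsilon'$ and the trace identity above yields \eqref{eq:OneTraceMap2} for $\epsilon'$ chosen small enough. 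For the quasidiagonal case, quasidiagonality of $\tau$ directly produces a c.p.c.\ map $\theta:A\to F$ that is approximately multiplicative on $\mathcal F$ (so \eqref{eq:OneTraceMap3} holds) and approximately trace-preserving; this replaces the use of Lemma \ref{lem:BCW}, while the remaining construction of $\eta$ proceeds identically via hyperfiniteness of $M$. The principal obstacle throughout is the passage from the convex combination $\phi$ to a single c.p.c.\ order zero map in $M^\omega$ and then back to $A$, which necessarily sacrifices operator-norm control; the key point is that the single-trace focus plus hyperfiniteness of $\pi_\tau(A)''$ furnish precisely the structure required.
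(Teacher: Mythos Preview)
Your approach has a genuine gap at the step ``the trace identity above yields \eqref{eq:OneTraceMap2}''. The trace identity $\tau_\omega(\tilde\eta(z)\pi_\tau(y))=\tau(\phi(z)y)$ only controls \emph{inner products} of $\tilde\eta(\theta(x))$ against elements of $\pi_\tau(A)$; it does not control the $2$-norm of $\tilde\eta(\theta(x))-\pi_\tau(x)$. Indeed, since $\sum_ip_i=1_{M^\omega}$ and the $p_i$ are central and orthogonal, one computes directly
\[
\|\tilde\eta(\theta(x))-\pi_\tau(x)\|_{2,\tau_\omega}^2
=\ssum_{i=1}^n\tau_\omega\big(p_i\,|\pi_\tau(\eta_i(\theta(x))-x)|^2\big)
=\ssum_{i=1}^n\lambda_i\,\|\eta_i(\theta(x))-x\|_{2,\tau}^2.
\]
Knowing $\|\sum_i\lambda_i\eta_i(\theta(x))-x\|<\epsilon'$ gives no bound on the right-hand side: the convex combination can be close to $x$ while the individual summands $\eta_i(\theta(x))$ are far from $x$ in $\|\cdot\|_{2,\tau}$ (convexity of $\|\cdot\|_{2,\tau}^2$ goes the wrong way). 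So your gluing $\tilde\eta$ is order zero but there is no reason for $\tilde\eta\circ\theta$ to approximate the identity in $\|\cdot\|_{2,\tau_\omega}$. A secondary issue is that the projections $p_i$ with $\tau_\omega(p_ia)=\lambda_i\tau(a)$ need not exist when $\pi_\tau(A)''$ has type~I finite summands (the relative commutant there is scalar), and the lemma does not assume $A$ is simple.

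The paper sidesteps both problems by never forming a convex combination. It passes to the cone $C:=(C_0((0,1])\otimes A)^\sim$, on which \emph{every} trace is quasidiagonal (\cite[Proposition~3.2]{BCW16}), and applies \cite[Lemma~2.5]{BCW16} at the trace $\mathrm{ev}_1\otimes\tau$ to get an approximately multiplicative $\tilde\theta:C\to F$ together with a genuine $^*$-homomorphism $\tilde\eta:F\to\pi_{\mathrm{ev}_1\otimes\tau}(C)''\cong\pi_\tau(A)''$ satisfying $\|\tilde\eta\tilde\theta(z)-\pi_{\mathrm{ev}_1\otimes\tau}(z)\|_{2}<\epsilon$. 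Precomposing $\tilde\theta$ with the canonical order zero map $a\mapsto\id_{(0,1]}\otimes a$ gives $\theta:A\to F$ with \eqref{eq:OneTraceMap1}; since $\tilde\eta$ is already a $^*$-homomorphism (hence order zero), \cite[Lemma~1.1]{HKW12} lifts it to a c.p.c.\ order zero map $\eta:F\to A$ close in $\|\cdot\|_{2,\tau}$. The quasidiagonal case applies \cite[Lemma~2.5]{BCW16} directly to $A$. Thus the paper obtains $\eta$ as (a lift of) a single $^*$-homomorphism rather than trying to amalgamate several order zero pieces, which is precisely what your argument cannot achieve from the data it starts with.
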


\begin{proof}
Set $C:=(C_0((0,1])\otimes A)^\sim$, and define a c.p.c.\ order zero map $\psi:A \to C$ by $\psi(a):=\id_{(0,1]}\otimes a$. By \cite[Proposition 3.2]{BCW16}, every trace on $C$ is quasidiagonal.\footnote{That proposition shows the result for $C_0((0,1])\otimes A$, but it directly follows for the unitisation as well, by unitising.} Therefore \cite[Lemma 2.5]{BCW16} applies to $C$.  

Since the GNS representation $\pi_{\mathrm{ev}_1 \otimes \tau}(C)''$, where $\mathrm{ev}_1$ denotes evaluation at $1$ on $C_0((0,1])^\sim$, is a direct summand of the finite part of $C^{**}$, it follows by \cite[Lemma 2.5]{BCW16} that there are a finite dimensional $\mathrm{C}^*$-algebra $F$, a c.p.c.\ map $\tilde\theta:C \to F$, and a $^*$-homomorphism $\tilde\eta:F \to \pi_{\delta_1\otimes\tau}(C)''$ such that
	\begin{align}
	\|\tilde\theta(x)\tilde\theta(y)-\tilde\theta(xy)\| &< \epsilon \quad && \text{for }x,y \in \psi(\mathcal F), \text{ and} \label{eq:OneTraceMap.new1}\\
	\|\tilde\eta \circ \tilde\theta(x)-\pi_{\mathrm{ev}_1 \otimes \tau}(x)\|_{2,\mathrm{ev}_1 \otimes \tau} &< \epsilon &&\text{for }x \in \psi(\mathcal{F}). \label{eq:OneTraceMap.new2}
	\end{align}
Define $\theta:=\tilde\theta\circ \psi:A \to F$, so that \eqref{eq:OneTraceMap1} is a consequence of \eqref{eq:OneTraceMap.new1}.

Since $\mathrm{ev}_1\otimes\id_A:C \to A$ is surjective, a routine computation with the GNS-construction gives an isomorphism $\pi_{\mathrm{ev}_1\otimes\tau}(C)''\cong\pi_\tau(A)''$ making
	\begin{equation} \xymatrix{C \ar[r]^{\mathrm{ev}_1\otimes\id_A} \ar[d]^{\pi_{\mathrm{ev}_1\otimes\tau}} & A \ar[d]^{\pi_\tau} \\ \pi_{\mathrm{ev}_1\otimes\tau}(C)'' \ar[r]^{\cong} & \pi_\tau(A)''} \end{equation}
commute.
Thus we may view the codomain of $\tilde\eta$ as $\pi_\tau(A)''$.
Moreover, for $x \in \mathcal F$, as $(\mathrm{ev}_1\circ\id_A)\circ\psi(x)=x$, \eqref{eq:OneTraceMap.new2} gives
\begin{equation}
\|\tilde\eta\circ\theta(x)-\pi_\tau(x)\|_{2,\tau} = \big\|\tilde\eta\circ\tilde\theta(\psi(x))-\pi_{\mathrm{ev}_1 \otimes \tau}(\psi(x))\big\|_{2,\mathrm{ev}_1\otimes \tau} < \epsilon. \end{equation}
By \cite[Lemma 1.1]{HKW12}, we may approximate the map $\tilde\eta$ in the point-strong$^*$ topology (equivalently point-$\|\cdot\|_{2,\tau}$) by a c.p.c.\ order zero map $\eta$ into $A$, so that $\|\eta\circ\theta(x)-x\|_{2,\tau} < \epsilon$ for $x \in \mathcal F$.

In the case that all traces on $A$ are quasidiagonal, we simply apply \cite[Lemma 2.5]{BCW16} directly to $A$ (instead of $C$) to get $\tilde\eta$ and $\theta$, and then again use \cite[Lemma 1.1]{HKW12} to get $\eta$.
\end{proof}

We now glue the previous lemma over $T(A)$ using CPoU to obtain our existence result. Note that the conclusion of Lemma \ref{lem:NiceFactoring} is stronger than its counterpart in \cite{BBSTWW}, since it exactly factorises the map $A \to A^\omega$ (as opposed to producing a factorisation which agrees on traces).

\begin{lemma}[{cf.\ \cite[Lemma 7.4]{BBSTWW}}]
\label{lem:NiceFactoring}
	Let $A$ be a separable, unital, nuclear $\mathrm{C}^*$-algebra with $T(A)\neq\emptyset$ and CPoU.
	Then there exists a sequence of c.p.c.\ maps $\phi_n:A \to A$ which factor through finite dimensional algebras $F_n$ as
	\begin{equation}
	\label{eq:NiceFactoring1}
	\xymatrix{A\ar[dr]_{\theta_n}\ar[rr]^{\phi_n}&&A\\&F_n \ar[ur]_{\eta_n}}
	\end{equation}
	with $\theta_n$ c.p.c.\ and $\eta_n$ c.p.c.\ order zero in such a way that the induced map $(\theta_n)_{n=1}^\infty:A \to \prod_\omega F_n$ is order zero and the induced map $\Phi=(\phi_n)_{n=1}^\infty:A\rightarrow A^\omega$ agrees with the diagonal map $A \to A^\omega$.
	
	If all traces on $A$ are quasidiagonal, then we may arrange that $(\theta_n)_{n=1}^\infty$ is unital.
\end{lemma}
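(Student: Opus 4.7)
My plan is to reduce to a single-step approximation: for every finite $\mathcal F \subset A$ and $\epsilon > 0$, produce a finite-dimensional $F$, a c.p.c.\ map $\theta:A\to F$, and a c.p.c.\ order zero map $\eta:F\to A$ with $\|\theta(x)\theta(y)\| < \epsilon$ whenever $x,y\in\mathcal F$ satisfy $xy=0$, and $\|\eta\theta(x)-x\|_{2,T(A)} < \epsilon$ for $x\in\mathcal F$. Iterating over a sequence of finite sets $\mathcal F_n$ with dense union in $A$ and $\epsilon_n\to 0$ then assembles the required sequences $(F_n,\theta_n,\eta_n)$: the approximate orthogonality of $\theta_n$ on $\mathcal F_n$ upgrades to exact order zero for $(\theta_n):A\to \prod_\omega F_n$, while the $\|\cdot\|_{2,T(A)}$-estimates force $\Phi = (\eta_n\theta_n)$ to agree with the diagonal map $\iota:A\to A^\omega$.

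For the single step, fix $\mathcal F$ and $\epsilon$. For each $\tau\in T(A)$, Lemma \ref{lem:OneTraceMap} yields a triple $(F^\tau,\theta^\tau,\eta^\tau)$ with $\|\eta^\tau\theta^\tau(x)-x\|_{2,\tau}^2 < \epsilon^2/|\mathcal F|$ for every $x\in\mathcal F$ together with the approximate orthogonality of $\theta^\tau$ on $\mathcal F$. Each of these inequalities is weak-$*$ open in $\tau$, and since the CPoU hypothesis forces $T(A)$ to be compact, finitely many triples $(F_i,\theta_i,\eta_i)_{i=1}^k$ cover $T(A)$. Set $a_i := \sum_{x\in\mathcal F}|\eta_i\theta_i(x)-x|^2 \in A_+$, so $\min_i\tau(a_i) < \epsilon^2$ for every $\tau\in T(A)$, and hence for every $\tau\in T_\omega(A)$ (the limit trace $\tau$ restricts on $A$ to a trace in $T(A)$ by compactness). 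Apply Definition \ref{defn:CPOU} with $S=A$ and $\delta = \epsilon^2$ to obtain pairwise orthogonal projections $p_1,\dots,p_k \in A^\omega\cap A'$ with $\ssum_i p_i = 1_{A^\omega}$ and $\tau(p_i a_i) \leq \epsilon^2 \tau(p_i)$ for all $\tau\in T_\omega(A)$.

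Now set $F:=\bigoplus_i F_i$, let $\theta:=(\theta_i)_i:A\to F$ be the direct sum (which is c.p.c.\ and remains approximately orthogonal on $\mathcal F$), and define
\begin{equation*}
\eta:F\to A^\omega, \qquad \eta(f_1,\dots,f_k) := \ssum_{i=1}^k p_i\eta_i(f_i).
\end{equation*}
The $p_i$ are pairwise orthogonal projections commuting with every $\eta_i(F_i)\subseteq A$, so the summands $p_i\eta_i(f_i)$ lie in pairwise orthogonal corners of $A^\omega$; consequently $\eta$ is block-diagonal, hence \emph{exactly} c.p.c.\ and \emph{exactly} order zero (the latter from the order zero property of each $\eta_i$). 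Using centrality of the $p_i$ and $\ssum_i p_i = 1_{A^\omega}$,
\begin{equation*}
|\eta\theta(x) - \iota(x)|^2 \,=\, \big|\ssum_i p_i(\eta_i\theta_i(x)-x)\big|^2 \,=\, \ssum_i p_i\,|\eta_i\theta_i(x)-x|^2;
\end{equation*}
summing over $x\in\mathcal F$ and evaluating any $\tau\in T_\omega(A)$ gives $\sum_{x\in\mathcal F}\|\eta\theta(x)-\iota(x)\|^2_{2,\tau} = \ssum_i\tau(p_ia_i) \leq \epsilon^2$, so $\|\eta\theta(x)-\iota(x)\|_{2,T_\omega(A)}\leq\epsilon$ for each $x\in\mathcal F$.

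Finally, $\eta$ corresponds to a $*$-homomorphism $C_0((0,1])\otimes F \to A^\omega$, and since the cone over a finite-dimensional $\mathrm{C}^*$-algebra is projective (Loring), this lifts through the surjection $\ell^\infty(A)\twoheadrightarrow A^\omega$ to a sequence of $*$-homomorphisms $C_0((0,1])\otimes F\to A$, i.e., to a sequence $(\eta_n)_{n=1}^\infty$ of exactly c.p.c.\ order zero maps $F\to A$ representing $\eta$. For $\omega$-many $n$, $\|\eta_n\theta(x)-x\|_{2,T(A)}<\epsilon$ on $\mathcal F$, which supplies the required single-step factorisation. The main obstacle I anticipate is maintaining \emph{exact} c.p.c.\ order zero for $\eta$: this is precisely where CPoU is essential, since honest projections (not merely approximately central positive contractions) make the sum defining $\eta$ block-diagonal, a rigidity that survives the Loring lift. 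In the quasidiagonal case, the enhanced Lemma \ref{lem:OneTraceMap} yields approximately multiplicative $\theta_i$, so the direct sum $\theta$ is too; a standard perturbation, cutting $F_n$ by the spectral projection of $\theta_n(1_A)$ near $1$, then makes each $\theta_n$ unital.
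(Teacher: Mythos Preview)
Your proof is correct and takes essentially the same approach as the paper: reduce to a single $(\mathcal F,\epsilon)$-step, apply Lemma~\ref{lem:OneTraceMap} at each trace, pass to finitely many via compactness, glue with the CPoU projections to form a block-diagonal order zero map into $A^\omega$, and lift via projectivity of cones over finite-dimensional algebras. The only cosmetic difference is that the paper phrases the reduction through Kirchberg's $\epsilon$-test whereas you iterate directly over increasing finite sets, and the paper makes $(\theta_n)$ unital in the quasidiagonal case by lifting the projection $(\theta_n)(1_A)\in\prod_\omega F_n$ rather than taking spectral projections; both amount to the same thing.
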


\begin{proof}
By Kirchberg's $\epsilon$-test (similar to the application of the $\epsilon$-test in the proof of \cite[Lemma 7.4]{BBSTWW}), it suffices to show that for a finite set $\mathcal F \subset A$ and a tolerance $\epsilon>0$, there is a sequence of c.p.c.\ maps $\phi_n:A \to A$ which factor through finite dimensional algebras $F_n$ as in \eqref{eq:NiceFactoring1} with $\theta_n$ c.p.c.\ and $\eta_n$ c.p.c.\ order zero such that
\begin{align}
\label{eq:NiceFactoring2}
	\|\theta_n(x)\theta_n(y)\| &< \epsilon \quad && \text{for }x,y \in \mathcal{F} \text{ satisfying }xy=0\text{, and} \\
\label{eq:NiceFactoring3}
	\|\Phi(x)-x\|_{2,T_\omega(A)} &\leq \epsilon \quad && \text{for }x \in \mathcal F,
\end{align}
where $\Phi:A\rightarrow A^\omega$ is the map induced by $(\eta_n\circ\theta_n)_{n=1}^\infty$. In fact, we will arrange for all the $F_n$ to be the same finite dimensional algebra $F$, and all the $\theta_n$ to be the same map $\theta$.

Fix a finite set $\mathcal F \subset A$ and $\epsilon>0$.
For each $\tau \in T(A)$, by Lemma \ref{lem:OneTraceMap}, there are a finite dimensional algebra $F_\tau$, a c.p.c.\ map $\theta_\tau:A \to F_\tau$ satisfying \eqref{eq:NiceFactoring2}, and a c.p.c.\ order zero map $\eta_\tau:F_\tau \to A$ such that 
\begin{equation}
\|\eta_\tau\circ\theta_\tau(x)-x\|_{2,\tau}^2<\epsilon^2/|\mathcal F|,\quad x\in \mathcal F.
\end{equation}
Define 
\begin{equation}\label{lem:NiceFactoring.neweq1}
a_\tau := \ssum_{x\in \mathcal F}|\eta_\tau(\theta_\tau(x))-x|^2 \in A_+,
\end{equation}
 so that $\tau(a_\tau) < \epsilon^2$.

Continuity and compactness give $\tau_1,\dots,\tau_k \in T(A)$ such that 
\begin{equation}\label{lem:NiceFactoring.neweq2}
 \min \{\tau(a_{\tau_1}),\dots,\tau(a_{\tau_k})\} < \epsilon^2,\quad \tau \in T(A). \end{equation}
Viewing $a_{\tau_1},\dots,a_{\tau_k}$ as elements of $A^\omega$, we apply CPoU\footnote{Note that $\min\{\tau(a_{\tau_1}),\dots,\tau(a_{\tau_k})\}<\epsilon^2$ for all $\tau\in T_\omega(A)$.} to get a partition of unity consisting of projections $e_1,\dots,e_k \in A^\omega \cap A'$ such that
\begin{equation} \tau(e_ia_{\tau_i}) \leq \epsilon^2\tau(e_i), \quad \tau \in T_\omega(A),\ i=1,\dots,k. \label{lem:NiceFactoring.neweq3} \end{equation}
	Define $F:=\bigoplus_{i=1}^k F_{\tau_i}$, $\theta:=\bigoplus_{i=1}^k \theta_{\tau_i}: A \to F$, and $\eta: F \to A^\omega$ by
\begin{equation} \eta(x_1,\dots,x_k) := \ssum_{i=1}^k e_i\eta_{\tau_i}(x_i),\quad x_i\in F_{\tau_i}. \end{equation}
Since the $e_i$ are orthogonal projections commuting with the images of the c.p.c\ order zero maps $\eta_{\tau_i}$, it follows that $\eta$ is c.p.c.\ order zero.
By projectivity of c.p.c.\ order zero maps with finite dimensional domains,\footnote{Essentially due to Loring in \cite[Theorem 4.9]{Lo93}, but see \cite[Proposition 1.2.4]{Wi09} for the form we use.} $\eta$ may be lifted to a sequence of c.p.c.\ order zero maps $\eta_n:F \to A$.

It is evident that $\theta_n=\theta$ satisfies \eqref{eq:NiceFactoring2}, since each map $\theta_{\tau_i}$ has the same property.
Next, for $x\in\mathcal F$ and $\tau \in T_\omega(A)$, we have
\begin{align}
\notag
\|\Phi(x)-x\|_{2,\tau}^2
\notag&\stackrel{\phantom{\eqref{lem:NiceFactoring.neweq1}}}= |\eta(\theta(x))-x\|_{2,\tau}^2\\
\notag&\stackrel{\phantom{\eqref{lem:NiceFactoring.neweq1}}}= \ssum_{i=1}^k \tau(e_i|\eta_{\tau_i}(\theta_{\tau_i}(x))-x|^2) \\
&\stackrel{\eqref{lem:NiceFactoring.neweq1}}\leq \ssum_{i=1}^k \tau(e_ia_{\tau_i}) 
\stackrel{\eqref{lem:NiceFactoring.neweq3}}\leq \ssum_{i=1}^k \tau(e_i)\epsilon^2 = \epsilon^2.
\end{align}
Therefore, $\|\Phi(x)-x\|_{2,T_\omega(A)} \leq \epsilon$, for all $x\in\mathcal F$, as required.

If all traces are quasidiagonal, then the above argument (using the last line of Lemma \ref{lem:OneTraceMap}) yields the stronger conclusion that the induced map $(\theta_n)_{n=1}^\infty:A \to \prod_\omega F_n$ is a $^*$-homomorphism.
Hence by cutting down by the image of $1_A$ under this map (which can be lifted to a projection in $\prod_{n=1}^\infty F_n$), we can arrange it to be unital.
\end{proof}

We now have all the pieces in place to obtain Theorem \ref{PropG}, and deduce its consequences. This is a matter of using the existence and uniqueness results established above using CPoU in place of the versions of these results with a compact tracial boundary assumption in \cite{BBSTWW}.

\begin{proof}[Proof of Theorem \ref{PropG}] 
The proof is almost exactly as in \cite[Theorem 7.5]{BBSTWW} (the corresponding result with compact boundary), using Lemma \ref{lem:ClassMaps} and Lemma \ref{lem:NiceFactoring} in place of \cite[Theorem 5.5]{BBSTWW} and \cite[Lemma 7.4]{BBSTWW}.\footnote{Recall that the maps produced in Lemma \ref{lem:NiceFactoring} satisfy a stronger conclusion than those of \cite[Lemma 7.4]{BBSTWW}.}

For the reader's convenience, we spell out a proof of the nuclear dimension component of the theorem explicitly.
Let $(\phi_n:A\rightarrow A)_{n=1}^\infty$ be the sequence of maps produced by Lemma \ref{lem:NiceFactoring}.  By construction, each $\phi_n$ has nuclear dimension zero,\footnote{Strictly speaking the definition of nuclear dimension in \cite[Definition 2.2]{TikW} is only made for $^*$-homomorphisms; for the purpose of this proof, we use exactly the same definition for c.p.c.\ order zero maps.} and the induced map $\tilde{\Phi}:A\rightarrow A_\omega$ is c.p.c.\ order zero. Following $\tilde{\Phi}$ with the quotient map  $q:A_\omega\rightarrow A^\omega$, gives the inclusion $A\hookrightarrow A^\omega$.

Let $h\in \mathcal Z_+^1$ have spectrum $[0,1]$.  We apply Lemma \ref{lem:ClassMaps} with $B=A$ to the diagonal embedding $A\rightarrow A_\omega$ and $\tilde{\Phi}:A\rightarrow A_\omega$ in place of $\phi_1$ and $\phi_2$, once with $k:=h$ and once with $k:=1_{\mathcal Z}-h$.
Simplicity of $A$ ensures that the diagonal embedding satisfies the fullness requirement, while the condition \eqref{eq:ClassMapsTrace} follows from the form of $q\circ\tilde{\Psi}$.  Accordingly there are unitaries $u^{(0)},u^{(1)}\in (A\otimes\mathcal Z)_\omega$, which lift to sequences of unitaries $(u^{(0}_n)_{n=1}^\infty$ and $(u^{(1)}_n)_{n=1}^\infty$ in $A\otimes\mathcal Z$ such that for all $a\in A$, we have
\begin{align}\notag
a\otimes 1_{\mathcal Z}&=a\otimes h + a\otimes (1_{\mathcal Z}-h)\\
&=\lim_{n\rightarrow\omega} (u_n^{(0)}(\phi_n(a)\otimes h)u_n^{(0)}{}^*+u_n^{(1)}(\phi_n(a)\otimes (1_{\mathcal Z}-h))u_n^{(1)}{}^*).
\end{align}
Noting that each of the maps $u_n^{(0)}(\phi_n(\cdot)\otimes h)u_n^{(0)}{}^*$ and $u_n^{(1)}(\phi_n(\cdot)\otimes (1_\mathcal Z-h))u_n^{(1)}{}^*$ has nuclear dimension zero, it follows that $\id_A\otimes1_{\mathcal Z}:A\rightarrow A\otimes \mathcal Z$ has nuclear dimension at most $1$.  As $A$ is $\mathcal Z$-stable, and $\mathcal Z$ is strongly self-absorbing, $\dimnuc(A)\leq 1$ by \cite[Proposition 2.6]{TikW}.
\end{proof}

In the presence of traces, Theorem \ref{thm:MainThm1} is a consequence of Theorems \ref{PropG} and \ref{thm:mainCPoU} (established at the end of Section \ref{S3}). In the absence of traces, Theorem \ref{thm:MainThm1} is \cite[Corollary 9.9]{BBSTWW} (as in this case $A$ is a Kirchberg algebra by \cite[Corollary 5.1]{Ro04}).

Theorem \ref{ThmA} is then a combination of our Theorem \ref{thm:MainThm1} for (ii)$\Rightarrow$(i) and the main result of \cite{Wi12} for (i)$\Rightarrow$(ii). 

\begin{proof}[Proof of Corollary \ref{CorC}]
Firstly recall that a $\mathrm{C}^*$-algebra is AF\footnote{For non-separable $\mathrm{C}^*$-algebras, there are various potential meanings of AF, which are not all equivalent. This corollary uses the local approximation formulation: $A$ is AF if and only if finite subsets of $A$ can be approximated inside finite dimensional subalgebras of $A$.} if and only if it has nuclear dimension zero if and only if it has decomposition rank zero (\cite[Remark 2.2(iii)]{WZ10} and \cite[Example 4.1]{KW04}), so this part of the statement is well known. For the rest of the proof we exclude finite dimensional $\mathrm{C}^*$-algebras, so that Winter's $\mathcal Z$-stability theorem applies.

For separable $A$, these statements are consequences of Theorem \ref{thm:MainThm1} and Winter's $\mathcal Z$-stability theorem (encompassed in Theorem \ref{ThmA}): if the nuclear dimension of $A$ is finite, then it is $\mathcal Z$-stable, so $A$ has nuclear dimension at most $1$. Likewise for decomposition rank, as in this case $A$ is finite with all traces quasidiagonal by \cite[Proposition 8.5]{BBSTWW}.

For the general case, if $A$ has finite nuclear dimension, then for any finite subset $\mathcal{F}$ of $A$ there exists a separable, simple, unital $\mathrm{C}^*$-subalgebra $A_0$ of $A$ containing $\mathcal{F}$ with finite nuclear dimension.\footnote{This is the statement that being simple and of nuclear dimension at most $n$ is separably inheritable (in the sense of \cite[Definition II.8.5.1]{Bla06}). This is a consequence of Proposition 2.6 of \cite{WZ10} (having nuclear dimension at most $n$ is separably inheritable, though this is not the language used in \cite{WZ10}),  Theorem II.8.5.6 of \cite{Bla06} (separable inheritability of simplicity), and Proposition II.8.5.2 (intersection of countably many separably inheritable properties is separably inheritable).} Thus $A_0$ has nuclear dimension at most $1$ by Theorem \ref{thm:MainThm1}. Since $\mathcal F$ was arbitrary, it follows that $A$ has nuclear dimension at most $1$. The same argument works for decomposition rank.
\end{proof}

Corollary \ref{CorD} is a direct replacement of finite nuclear dimension by $\mathcal Z$-stability (using Theorem \ref{thm:MainThm1}) as the regularity hypothesis in the classification theorem (see \cite{Go15}, \cite{EGLN15} and \cite[Corollary D]{TWW17}).  

In Corollaries \ref{CorE} and \ref{NewCor}, the crossed products $C(X)\rtimes G$ are simple when the actions are free and minimal (\cite[Corollary 5.16]{EH67}), and when $G$ is amenable they have the UCT by work of Tu (\cite{Tu}). Thus finite nuclear dimension is the remaining condition which must be checked to obtain classifiability, and so Corollary \ref{NewCor} is a combination of Theorem \ref{thm:MainThm1} and \cite[Theorem 5.4]{CJKMST-D}.

\begin{proof}[Proof of Corollary \ref{CorE}]
In \cite[Theorem 8.1]{KS18}, Kerr and Szab\'o show that every free action of a group $G$ as in Corollary \ref{CorE} on a finite dimensional space is almost finite (building on the zero dimensional case of this result in \cite{DZ17}), and hence if the action is also minimal, the crossed product is $\mathcal Z$-stable by \cite[Theorem 12.4]{K17}.  Finite nuclear dimension for these crossed products is a consequence of Theorem \ref{thm:MainThm1}, and classifiability follows from Corollary \ref{CorD}.\end{proof}

\end{document}